\newtheorem{Thm}{Theorem}
\newtheorem{Lem}{Lemma}
\newtheorem{Cor}{Corollary}
\theoremstyle{remark}
\theoremstyle{remark}\newtheorem{Rem}{Remark}
\theoremstyle{remark}
\numberwithin{equation}{section}
\def\sumprime_#1{\setbox0=\hbox{$\scriptstyle{#1}$}
\setbox2=\hbox{$\displaystyle{\sum}$}
\setbox4=\hbox{${}'\mathsurround=0pt$}
\dimen0=.5\wd0 \advance\dimen0 by-.5\wd2
\ifdim\dimen0>0pt
\ifdim\dimen0>\wd4 \kern\wd4 \else\kern\dimen0\fi\fi
\mathop{{\sum}'}_{\kern-\wd4 #1}}
\newcommand{\Z}{{\mathbb Z}}
\newcommand\norm[1]{\left\lVert#1\right\rVert}
\begin{document}

\title{Asymptotic evaluation of a
lattice sum associated with the Laplacian matrix
}

\author{Arzu Boysal \hspace{.3cm} Fatih Ecevit \hspace{.3cm} Cem Yal\c{c}{\i}n Y{\i}ld{\i}r{\i}m} 
\address{Department of Mathematics, Bo\~{g}az{\tiny \.{I}}\c{c}{\tiny \.{I}}  University, \.{I}stanbul 34342, Turkey}
\email{arzu.boysal@boun.edu.tr, fatih.ecevit@boun.edu.tr, yalciny@boun.edu.tr} \subjclass[2010]{Primary 41A60; Secondary 41A20, 65B15, 47A75}
\date{\today}

\keywords{lattice sums, asymptotic approximations}

\begin{abstract}
The Laplacian matrix is of fundamental importance in the study of graphs, networks, random walks on lattices, and arithmetic of curves. In certain cases, the trace of its pseudoinverse appears as the only non-trivial term in computing some of the intrinsic graph invariants. Here we study a double sum $F_n$ which is associated with the trace of the pseudo inverse of the Laplacian matrix for certain graphs. We investigate the asymptotic behavior of this sum as $n \to \infty$. Our approach is based on classical analysis combined with asymptotic and numerical analysis, and utilizes special functions. We determine the leading order term, which is of size $n^2 \, \log n$, and develop general methods to obtain the
secondary main terms in the asymptotic expansion of $F_n$ up to errors of $\mathcal{O}(\log n)$ and $\mathcal{O}(1)$ as $n \to \infty$. We provide some examples to demonstrate our methods.
\end{abstract}
\maketitle \markright{Asymptotic behavior of planar lattice sums}

\section{introduction} \label{sec:intro}

The Laplacian matrix $\mathcal{L}$, defined as the difference of the degree and adjacency matrices for graphs (or networks) \cite[\S2.2]{Poz}, plays an important role in a variety of fields encompassing graph theory, random walks on lattices, analysis of grids, and arithmetic of curves (see e.g. \cite{Hu,Cj,Poz,Ye,Gu,BGPWZ,C1,C2} and the references therein). Of fundamental interest is the trace
$\operatorname{tr}(\mathcal{L}^+)$, where $\mathcal{L}^+$ is the pseudoinverse of the Laplacian matrix $\mathcal{L}$. Under doubly periodic boundary conditions \cite{Poz,Ye}, 
this trace is related to the sum
\begin{equation} \label{eq:Fn-concreteT}
	F_n= \!\!\! \sum_{j,k=0 \atop (j,k)\neq (0,0)}^{n-1} {1\over 1- {1\over L}\sum\limits_{\ell = 1}^{L} \cos (s_{1,\ell} {2\pi j \over n} + s_{2,\ell} {2\pi k \over n})}
\end{equation}
where $L \ge 2$ is an integer, $s_{1,\ell},s_{2,\ell}$ are non-zero integers with $(s_{1,1},s_{2,1}) = (1,0)$ and $(s_{1,2},s_{2,2}) = (0,1)$.
For example, when the graph is related to the square lattice
\[
	\operatorname{tr}(\mathcal{L}^+)
	= {1 \over 4} \!\!\! \sum_{j,k=0 \atop (j,k)\neq (0,0)}^{n-1} {1\over 1- {1\over 2} ( \cos{2\pi j \over n} + \cos{2\pi k \over n})},
\]
and when it corresponds to the triangular lattice
\[
	\operatorname{tr}(\mathcal{L}^+)
	= {1 \over 6} \!\!\! \sum_{j,k=0 \atop (j,k)\neq (0,0)}^{n-1} {1\over 1- {1\over 3} ( \cos{2\pi j \over n}+ \cos{2\pi k \over n} + \cos{2\pi (j+k) \over n} )}.
\]

Determining the exact explicit dependence of $F_n$ on $n$, and thus that of $\operatorname{tr}(\mathcal{L}^+)$, seems to be a nontrivial problem, and there are no results in the literature in this direction. Here we study the asymptotic behavior of $F_n$ as $n\to\infty$. The analysis of this asymptotic behavior directly unravels the asymptotic behavior of  $\operatorname{tr}(\mathcal{L}^+)$ as well as the two intrinsic invariants associated with a graph $G$, namely the tau constant $\tau(G)$ and the Kirchhoff index $\operatorname{Kf}(G)$ \cite{BR,Rumely,Ye}. Indeed, for the class of $\mathsf{d}$-regular, equi-resistant, normalized metrized graphs with equal edge lengths, without multiple edges, bridges and self loops (for definitions see e.g. \cite{BeinekeWilson,Rumely}) one has \cite{C1,C2}
\[
	\tau(G)
	= {1 \over 12} \Big( 1 - {2 (\nu-1)\over \mathsf{d} \, \nu} \Big)^2 + {1 \over \nu} \operatorname{tr}(\mathcal{L}^+)
	={1 \over 12} \Big( 1 - {2 (\nu-1) \over \mathsf{d} \, \nu} \Big)^2 + {1 \over \nu^2} \operatorname{Kf}(G)  
\]
where $\nu$ is the number of vertices of $G$. This class contains graphs associated with certain complex semi-simple Lie algebras of rank $2$, e.g. those of $\mathfrak{sl}_2\times \mathfrak{sl}_2$ and $\mathfrak{sl}_3$ (see \cite{Bo} for details) whose root lattices are otherwise known as the square lattice and the triangular lattice respectively.

The approach we develop in this paper for understanding the asymptotic behavior of $F_n$ is a generalization of our work on
the triangular lattice \cite{BEY} where we studied a related sum
\[
	S_n = \sum_{j,k=1}^{n-1} \frac{1}{3-\cos({2\pi j \over n})-\cos({2\pi k \over n})-\cos({2\pi (j+k) \over n})}.
\]
Using the relation $S_n = {1\over 3} ( F_n - {1 \over 2} (n^2-1))$, the main result in \cite{BEY} is
\[
	F_n = {\sqrt{3} \over \pi} n^2\log n + {\sqrt{3} \over \pi}\Big( \gamma + \log(4\pi\sqrt[4]{3}) - 3\log\Gamma\big({1\over 3}\big) \Big) n^2 + \mathcal{O}(\log n),
	\quad
	\text{as } n \to \infty,
\]
where $\gamma$ is Euler's constant. To study the asymptotic behavior of $S_n$, we utilized the first non-zero term in the Taylor approximation of the denominator
of the summand. This term is a positive definite binary quadratic form, and we used it to set up various sums and integrals approximating the sum $S_n$. We showed that
a combination of these sums and integrals facilitates the determination of the asymptotic behavior of $S_n$ within an error term of magnitude $\mathcal{O}(\log n)$.
Here we show that the first non-zero term in the Taylor approximation of the denominator of the summand in the general setting of the sum $F_n$ in \eqref{eq:Fn-concreteT}
is still a positive definite binary quadratic form, and the same combination of the related sums and integrals allows for the determination of the asymptotic
behavior of $F_n$ with an error of $\mathcal{O}(\log n)$. We also prove that the utilization of any higher order Taylor approximation of the denominator in the summand implies
an error of $\mathcal{O}(1)$.

The paper is organized as follows. In \S2 we present an abstract coordinate free formulation of $F_n$ in \eqref{eq:Fn-concreteT} which allows us to deduce that it is rational for any $n$. We also derive a more suitable representation of $F_n$ for studying its asymptotic behavior. In \S\ref{sec:mainresults}, we
relate the sum $F_n$ to sums and integrals obtained by Taylor approximations of the denominator of the summands. The first main result therein, Theorem~\ref{thm:main1}, shows
that these sums and integrals all share the same leading term (of magnitude $n^2 \log n$) in their asymptotic expansions with that of $F_n$.
For obtaining the next two terms (of sizes $n^2$ and $n$) in the asymptotic expansion of $F_n$, we establish in Theorem~\ref{thm:main2} a relation among
these sums and integrals, and we obtain error terms of $\mathcal{O}(\log n)$ or $\mathcal {O}(1)$ depending on the degree of the Taylor polynomial utilized. For understanding the implication of the use of the first non-zero term in the Taylor approximation, we investigate the asymptotic behavior of the sum of reciprocals of a positive definite binary quadratic form at the lattice points and state our result in Theorem~\ref{thm:Gn}. We then use this to derive the asymptotic behavior of the associated sum in Theorem~\ref{thm:Fnf1}. The asymptotic behavior of a related integral is stated as Theorem~\ref{thm:Inf1}.
The content of \S4 is some lemmata and the proofs of Theorems~\ref{thm:main1}, \ref{thm:main2} and \ref{thm:Inf1}.
The proofs of Theorems~\ref{thm:Gn} and \ref{thm:Fnf1} are given in \S\ref{sec:Gn} and \S\ref{sec:Fnf1} respectively.
In \S\ref{sec:square}, we apply Theorem~\ref{thm:main2} to two examples coming from Lie theory.
The first example corresponds to the root lattice of $\mathfrak{sl}_2\times \mathfrak{sl}_2$ (the square lattice) whereas the second example is for the
root lattice of $\mathfrak{so}_5$ (the union jack lattice).
Some problems and directions for further research are noted in \S\ref{sec:fp}.

{\it Notation.}
Vectors are written boldface. We take an empty sum to be zero. The letter $\gamma$ denotes Euler's constant.

\section{Abstract and alternative formulations of $F_n$}

Let $\Phi$ be a finite collection of nonzero vectors in a two dimensional real vector space  generating a lattice $\Lambda =\Z\Phi$ of full rank  with basis in $\Phi$.
We remark that the conditions on $\Phi$ are satisfied for positive roots of complex semisimple Lie algebras of rank 2 (they are of type $A_1\times A_1$, $A_2$, $B_2$ or $G_2$,
see \cite{Bo} for details).

We denote the number of elements of $\Phi$  by $|\Phi|$, and define a function $\phi$ associated to $\Phi$ as
\[
	\phi(x) := \frac{1}{2|\Phi|} \sum_{\mathbf{s} \in \pm \Phi} e^{ i \langle \mathbf{s}, x \rangle},	
\]
where the bracket $\langle \cdot, \cdot \rangle$ denotes the standard duality pairing. 

Let us write $\Phi = \{ \mathbf{b}_1, \ldots, \mathbf{b}_{|\Phi|} \}$ and assume that $\{ \mathbf{b}_1, \mathbf{b}_2\}$
is a basis for $\Lambda$, and denote the associated dual basis by $\{\ss_1,\ss_2 \}$, that is $\langle \mathbf{b}_i,\ss_j\rangle=\delta_{ij}$.  
For a given positive integer $n$, let 
\[
	t_j := \frac{2\pi j}{n}
	\quad (j \in \mathbb{Z}), \;\; \text{and}\;\;
	x_{j,k} := t_j \, \ss_1 + t_k \, \ss_2
	\quad
	(j,k \in \mathbb{Z}),
\]
and consider the sum
\begin{equation}
	F_n(\Phi) := \sum_{j,k=0 \atop (j,k)\neq (0,0)}^{n-1} \frac{1}{1-\phi(x_{j,k})}. \label{eq:sum}
\end{equation}
Since $\phi$ has rational coefficients and the summation is invariant under the action of the Galois  group of the cyclotomic field obtained by adjoining a primitive $n$-th root of unity to $\mathbb Q$, $F_n(\Phi)$ takes rational values for any $n$.

The sum $F_n(\Phi)$ depends only on $\Phi$ and $n$, and not on our choice of a basis of $\Lambda$ from $\Phi$.
Indeed if $\{ \mathbf{b}_1,\mathbf{b}_2 \} \subset \Phi$ is a basis for $\Lambda$, we have $\mathbf{b}_{\ell} = s_{1,\ell} \mathbf{b}_1 + s_{2,\ell} \mathbf{b}_2$ $(\ell =1,\ldots,|\Phi|)$ for some $s_{1,\ell},s_{2,\ell} \in \mathbb{Z}$,
and therefore
\begin{equation} \label{eq:key}
	\langle \mathbf{b}_{\ell}, x_{j,k} \rangle
	= \langle s_{1,\ell} \mathbf{b}_1 + s_{2,\ell} \mathbf{b}_2, t_j \, \ss_1 + t_k \, \ss_2 \rangle
	= s_{1,\ell} t_j + s_{2,\ell} t_k
	= \mathbf{s}_{\ell} \cdot \boldsymbol{t}_{j,k},
\end{equation}
where $\boldsymbol{t}_{j,k} := (t_j,t_k)$ for $j,k \in \mathbb{Z}$, $\cdot$ is the standard Euclidean inner product in $\mathbb{R}^2$, and
$\mathbf{s}_{\ell} = (s_{1,\ell},s_{2,\ell})$ for $\ell = 1,\ldots, |\Phi|$ so that
\begin{equation} \label{eq:s1-s2}
	\mathbf{s}_{1} = (1,0)
	\quad
	\text{and}
	\quad
	\mathbf{s}_{2} = (0,1).
\end{equation}
If $\{ \mathbf{b}_1',\mathbf{b}_2' \} \subset \Phi$ is another basis for $\Lambda$, then we formally have
\[
	\begin{bmatrix}
		\alpha_{11} & \alpha_{12} \\
		\alpha_{21} & \alpha_{22}
	\end{bmatrix}
	\begin{bmatrix}
		\mathbf{b}_1 \\
		\mathbf{b}_2
	\end{bmatrix}
	= \begin{bmatrix}
		\mathbf{b}_1' \\
		\mathbf{b}_2'
	\end{bmatrix}
\]
for some unimodular matrix  $\alpha = (\alpha_{ij})$. The associated dual bases are then related by
\[
	(\alpha^{-1})^T
	\begin{bmatrix}
		\ss_1 \\
		\ss_2
	\end{bmatrix}
	= \begin{bmatrix}
		\ss_1' \\
		\ss_2'
	\end{bmatrix},
\]
and, for $y_{j,k} = t_j \ss_1' + t_k \ss_2'$, we have
\[
	\langle \mathbf{b}_{\ell}, y_{j,k} \rangle
	= \begin{bmatrix}
		s_{1,\ell} & 
		s_{2,\ell}
	\end{bmatrix} 
	\alpha^{-1}
	\begin{bmatrix}
		t_j \\
		t_k
	\end{bmatrix}
	= {2\pi \over n} \
	\mathbf{s}_{\ell}
	\cdot \alpha^{-1}
	\begin{bmatrix}
		j \\
		k
	\end{bmatrix}.
\]
Since $\mathbb{Z}_n\times \mathbb{Z}_n$ is invariant under unimodular transformations, it follows that the sum $F_n(\Phi)$
is independent of the choice of the basis.

Using \eqref{eq:key}, we see that the sum $F_n(\Phi)$ can be expressed as
\begin{equation} \label{eq:Fn-concrete}
	F_n (\Phi)
	= F_n(f)
	:= \!\!\!\! \sum_{j,k=0 \atop (j,k)\neq (0,0)}^{n-1} \!\! f(\mathbf{t}_{j,k})
	= \!\!\!\!\sum_{\mathbf{t}_{j,k} \in [0,2\pi)^2 \backslash \{ \mathbf{0} \}} \!\!\!\!  f(\mathbf{t}_{j,k})
	\quad
	\text{with } \
	f(\mathbf{x}) := \frac{1}{\psi(\mathbf{x})}
\end{equation}
where
\begin{equation} \label{eq:psi}
	\psi(\mathbf{x})
	:= 1 - \frac{1}{|\Phi|} \sum_{\ell = 1}^{|\Phi|} \cos (\mathbf{s}_{\ell} \cdot \mathbf{x})
	= \frac{2}{|\Phi|} \sum\limits_{\ell=1}^{|\Phi|} \sin^2(\frac{1}{2} \mathbf{s}_{\ell} \cdot \mathbf{x}),
	\qquad
	\mathbf{x} = (x_1,x_2) \in \mathbb{R}^2.
\end{equation}
Denoting $|\Phi|$ by $L$, we see that $F_n$ in \eqref{eq:Fn-concreteT} is a representation of $F_n(\Phi)$.

The function $f$ is singular at the corners of the rectangle $[0,2\pi] \times [0,2\pi]$. Separating the sum in
\eqref{eq:Fn-concrete} into two parts consisting of $\mathbf{t}_{j,k} \in [0,\pi) \times [0,2\pi)$ and
$\mathbf{t}_{j,k} \in [\pi,2\pi) \times [0,2\pi)$,
we note that the latter may be brought into $\mathbf{t}_{j,k} \in [-\pi,0) \times [0,2\pi)$ without altering the sum. This is because
$s_{1,\ell} \in \mathbb{Z}$ and therefore $\psi$ is $2\pi$-periodic in the first argument of $\mathbf{x} = (x_1,x_2)$. Accordingly,
the sum can be evaluated over $\mathbf{t}_{j,k} \in [-\pi,\pi)\times [0,2\pi) \backslash \{ \mathbf{0} \}$. By a similar argument, using the
$2\pi$-periodicity of $\psi$ in the second argument of $\mathbf{x} = (x_1,x_2)$, we obtain
\begin{equation} \label{eq:Fn-concrete1}
	F_n (f) = \sum_{\mathbf{t}_{j,k} \in [-\pi,\pi)^2 \backslash \{ \mathbf{0} \}} f(\mathbf{t}_{j,k}).
\end{equation}
Aside from a singularity at the origin, the function $f$ is
smooth in the rectangle $[-\pi,\pi] \times [-\pi,\pi]$. To simplify the notation in what follows, we set
\begin{equation} \label{eq:D_n}
	D_n
	:= \bigcup_{\mathbf{t}_{j,k} \in [-\pi,\pi)^2 \backslash \{ \mathbf{0} \}} X_{j,k}
	= 	\left\{
		\begin{array}{ll}
			[-\pi,\pi]^2 \backslash [-\frac{\pi}{n},\frac{\pi}{n}]^2, & n \text{ odd},
			 \vspace{0.1cm} \\
			\left[ -\pi-\frac{\pi}{n},\pi-\frac{\pi}{n} \right]^2 \backslash \left[ -\frac{\pi}{n},\frac{\pi}{n} \right]^2, & n \text{ even},
		\end{array}
	\right.	
\end{equation}
where $X_{j,k} = X_j \times X_k$ with $X_j = [x_j,x_{j+1}]$ and $x_j = t_j - \frac{\pi}{n}$ for $j \in \mathbb{Z}$, and we note that the sum
$F_n(f)$ can also be expressed as
\[
	F_n (f) = \sum_{\mathbf{t}_{j,k} \in D_n} f(\mathbf{t}_{j,k}).
\]

\section{Statement of the main results} \label{sec:mainresults}

In this section, we introduce the approximations of $F_n(f)$ in the form of integrals and sums, and state our main results.

Let us first note that the sum $F_n(f)$ corresponds exactly to the approximation of the integral
\begin{equation} \label{eq:Inf}
	I_{n}(f) := \dfrac{1}{\Delta_n^2} \iint_{D_n} f(\mathbf{x}) \, d\mathbf{x},
	\qquad \big( \Delta_n := \frac{2\pi}{n} \big),
\end{equation}
by applying the product cubature rule, with both factors coming from the midpoint rule, on each of the rectangles $X_{j,k}$ in $D_n$.
As we shall see below, $I_{n}(f)$ and $F_n(f)$ share the same leading order term in their asymptotic expansions, but this is not true
for secondary terms. Accordingly, the derivation of lower order terms in the asymptotic expansion of $F_n(f)$ calls for a different
approach. To this end, as in \cite{BEY}, we resort to the Taylor series expansion of the function $\psi$
appearing in the denominator of $f$. Unlike \cite{BEY}, however, we consider approximation of $\psi$ not
just by the first non-zero term in its Taylor series expansion but also terms of arbitrarily high order. As we show in
Lemma~\ref{Lemma:psi-and-derivatives} below, for $m \ge 1$, the $2m$-th order Taylor polynomial approximation of $\psi$ around
the origin is given by
\[
	p_{m}(\mathbf{x})
	: = \frac{1}{|\Phi|} \sum_{\ell = 1}^{|\Phi|}
	\sum_{j=1}^{m} \frac{(-1)^{j+1}(\mathbf{s}_{\ell} \cdot \mathbf{x})^{2j}}{(2j)!}.
\]
This naturally motivates one to set $f_{m}(\mathbf{x}) := \dfrac{1}{p_m(\mathbf{x})}$ and consider the approximation of
$F_n(f)$ and $I_n(f)$ by
\begin{equation} \label{eq:Fnm-Infm-bare}
	F_n (f_m) := \sum_{\mathbf{t}_{j,k} \in D_n} f_m(\mathbf{t}_{j,k})
	\qquad
	\text{and}
	\qquad
	I_{n}(f_m) := \dfrac{1}{\Delta_n^2} \iint_{D_n} f_m(\mathbf{x}) \, d\mathbf{x}.
\end{equation}
These approximations do not generate any additional problems when $m=1$ since $p_1$ vanishes only at the origin in $\mathbb{R}^2$. Likewise, when $m>1$, $p_m$ vanishes at the origin but it may also have additional zeros in the rectangle $[-\pi,\pi] \times [-\pi,\pi]$, and therefore the domain $D_n$ in \eqref{eq:Fnm-Infm-bare} must be suitably restricted to a smaller rectangle. We set
\begin{equation} \label{eq:sbar}
	\overline{s} = \max_{1 \le \ell \le |\Phi|} \Vert \mathbf{s}_{\ell} \Vert ,
	\qquad
	(\overline{s} \ge 1 \text{ by \eqref{eq:s1-s2}}),
\end{equation}
and, given a fixed $\beta \in (0,1)$, we let
\[
	F_n^{\beta} (f) := \sum_{\mathbf{t}_{j,k} \in D_n^{\beta}} f(\mathbf{t}_{j,k}),
	\qquad
	I_n^{\beta}(f) := \frac{1}{\Delta_n^2} \iint_{D_n^{\beta}} f(\mathbf{x}) d\mathbf{x},
\]
and
\[
	F_n^{\beta} (f_m) := \sum_{\mathbf{t}_{j,k} \in D_n^{\beta}} f_m(\mathbf{t}_{j,k}),
	\qquad
	I_n^{\beta}(f_m) := \frac{1}{\Delta_n^2} \iint_{D_n^{\beta}} f_m(\mathbf{x}) d\mathbf{x},
\]
where
\[
	D_{n}^{\beta} := \bigcup_{|t_{j}|, |t_k| \le \frac{\sqrt{5(1-\beta)}}{\overline{s}} \atop (t_j,t_k) \ne (0,0)} X_{j,k}.
\]
The leading terms in the asymptotic expansions of these sums and integrals are all the same and this constitutes the first main result of this paper.

\begin{Thm} \label{thm:main1}
If $\Sigma_n \in \{ F_n(f), F_n^{\beta} (f), F_n(f_1), F_n^{\beta} (f_m) , 
I_n(f), I_n^{\beta}(f), I_n(f_1), I_n^{\beta}(f_m) \}$ for some fixed
$\beta \in (0,1)$ and $m \ge 1$, then
\[
	\Sigma_n = \frac{|\Phi|}{\pi \sqrt{\det(S^TS)}} \, n^2 \log n + \mathcal{O}(n^2)
\]
as $n \to \infty$ where
\begin{equation} \label{eq:S}
	S =
	\begin{bmatrix}
		s_{1,1} & \cdots & s_{1,\ell} & \cdots & s_{1,|\Phi|}
		\\
		s_{2,1} & \cdots & s_{2,\ell} & \cdots & s_{2,|\Phi|}
	\end{bmatrix}^T.
\end{equation}
\end{Thm}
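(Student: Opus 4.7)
The strategy is to reduce all eight quantities to the single integral $I_n(f_1)$, which can be evaluated by an explicit linear change of coordinates. The key structural observation is that, by Lemma~\ref{Lemma:psi-and-derivatives}, one has $\psi(\mathbf{x}) = p_1(\mathbf{x}) + O(|\mathbf{x}|^4)$ near the origin, and a direct computation from the definition of $p_m$ gives
\[
p_1(\mathbf{x}) \;=\; \frac{1}{2|\Phi|}\,\mathbf{x}^{T}(S^{T}S)\,\mathbf{x},
\]
which is a positive definite quadratic form because \eqref{eq:s1-s2} forces $S$ to have rank two. Hence $\psi$ and $p_1$ are both comparable to $|\mathbf{x}|^2$ in a neighborhood of the origin, and both are bounded below by a positive constant on the complement of such a neighborhood in $[-\pi,\pi]^2$.

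The first step is to compute $I_n(f_1)$ directly. Setting $A=\frac{1}{|\Phi|}S^{T}S$ and substituting $\mathbf{x}=A^{-1/2}\mathbf{y}$ converts $p_1$ into $\tfrac12|\mathbf{y}|^2$, with $d\mathbf{x}=d\mathbf{y}/\sqrt{\det A}$ and $\sqrt{\det A}=\sqrt{\det(S^{T}S)}/|\Phi|$. The image of $D_n$ under this linear map is an annular region whose outer boundary lies at bounded distance from the origin and whose inner boundary is at distance comparable to $1/n$. Passing to polar coordinates then yields
\[
\iint_{D_n}\frac{d\mathbf{x}}{p_1(\mathbf{x})} \;=\; \frac{2|\Phi|}{\sqrt{\det(S^{T}S)}}\int_0^{2\pi}\!\log\!\frac{r_{\mathrm{out}}(\theta)}{r_{\mathrm{in}}(\theta)}\,d\theta \;=\; \frac{4\pi|\Phi|}{\sqrt{\det(S^{T}S)}}\log n + O(1),
\]
since $\log\bigl(r_{\mathrm{out}}(\theta)/r_{\mathrm{in}}(\theta)\bigr)=\log n + O(1)$ uniformly in $\theta$. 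Multiplying by $n^2/(4\pi^2)=1/\Delta_n^2$ produces the claimed leading asymptotic for $I_n(f_1)$.

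The second step is to reduce each remaining integral quantity to $I_n(f_1)$ with an $O(n^2)$ error. Using the identity $|1/g-1/h|=|g-h|/(gh)$, one checks that $|f-f_1|$ is uniformly bounded on $D_n$: near the origin because $|\psi-p_1|=O(|\mathbf{x}|^4)$ while $\psi,p_1\gtrsim |\mathbf{x}|^2$, and elsewhere because $f$ and $f_1$ are both bounded. The same argument handles $|f_m-f_1|$ on $D_n^{\beta}$, provided one has a lower bound $p_m(\mathbf{x})\gtrsim|\mathbf{x}|^2$ there; securing this bound is precisely the role of the truncation by $\sqrt{5(1-\beta)}/\overline{s}$ in the definition of $D_n^{\beta}$, and it should follow from a straightforward Taylor-remainder estimate comparing $p_m$ to $p_1$. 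Since in each case the integrand difference is $O(1)$ on a domain of bounded area, the corresponding difference of integrals is $O(1)/\Delta_n^2 = O(n^2)$; the further passage from $D_n$ to $D_n^{\beta}$ costs another $O(n^2)$ because $f_1$ is bounded on $D_n\setminus D_n^{\beta}$. The sum-to-sum reductions $F_n(f)-F_n(f_1)$, $F_n^{\beta}(f)-F_n^{\beta}(f_1)$, and $F_n^{\beta}(f_m)-F_n^{\beta}(f_1)$ are even easier: the summand differences are $O(1)$ and there are only $O(n^2)$ lattice points.

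The main obstacle, which I would address last, is the comparison $F_n(f_1)-I_n(f_1)=O(n^2)$, because the standard midpoint-rule error bound fails to be uniform near the singularity of $f_1$. I would treat separately the $O(1)$ cells $X_{j,k}$ adjacent to the excluded central hole, each of which contributes only $O(n^2)$ to both $F_n(f_1)$ and $I_n(f_1)$ because $|f_1(\mathbf{t}_{j,k})|$ and $\Delta_n^{-2}\iint_{X_{j,k}}f_1$ are both of size $O(n^2)$ there. On the remaining cells, homogeneity of $p_1$ gives $\|D^2 f_1\|_{L^\infty(X_{j,k})}=O(|\mathbf{t}_{j,k}|^{-4})$, so the midpoint-rule error per cell is $O\bigl(\Delta_n^4|\mathbf{t}_{j,k}|^{-4}\bigr)$; after dividing by $\Delta_n^2$ and summing against the convergent series $\sum_{\mathbf{m}\neq\mathbf{0}}\|\mathbf{m}\|^{-4}$, the total is $O(\Delta_n^{-2})=O(n^2)$. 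Combining the three steps yields the theorem in all eight cases.
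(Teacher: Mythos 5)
Your proposal is correct and follows essentially the same strategy as the paper: establish that all eight quantities differ pairwise by $\mathcal{O}(n^2)$ using the midpoint rule with second-derivative bounds of order $\Vert \mathbf{x}\Vert^{-4}$, the uniform boundedness of $f-f_1$ and $f_m - f_1$ coming from $|\psi - p_m| = \mathcal{O}(\Vert\mathbf{x}\Vert^4)$ against quadratic lower bounds, and lattice-point counting for the truncations, then evaluate $I_n(f_1)$ explicitly. The only differences are cosmetic: you diagonalize the quadratic form before passing to polar coordinates where the paper integrates the angular factor $\int (a\cos^2\theta + b\cos\theta\sin\theta + c\sin^2\theta)^{-1}\,d\theta$ directly, and you route some of the pairwise comparisons slightly differently (e.g.\ $F_n(f)\to F_n(f_1)\to I_n(f_1)$ rather than $F_n(f)\to I_n(f)$), with the same underlying estimates.
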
 

The second main result addresses the problem of obtaining the secondary main term in the asymptotic expansion of the sum $F_n(f)$.

\begin{Thm} \label{thm:main2}
As $n \to \infty$, we have
\begin{equation} \label{eq:simpleOlog}
	F_n(f) - I_n(f) + I_n(f_1) - F_n(f_1)
	= \mathcal{O}(\log n),
\end{equation}
and, for any fixed $\beta \in (0,1)$,
\begin{equation} \label{eq:Olog-O1}
	F_n(f) - I_n(f) + I_n^{\beta}(f_m) - F_n^{\beta}(f_m) = 
	\left\{ 	
		\begin{array}{cl}
			\mathcal{O}(\log n), & m = 1,
			\\
			\mathcal{O}(1), & m \ge 2.	
		\end{array}
	\right.
\end{equation}
\end{Thm}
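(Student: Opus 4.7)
The plan is to recognize both $F_n(\cdot) - I_n(\cdot)$ differences as errors of the product midpoint cubature rule and exploit the cancellation that occurs upon subtracting the two. Since the midpoint rule on a cell $X_{j,k}$ of sidelength $\Delta_n$ has error of order $\Delta_n^4 \max |D^2 g|$, one has
\[
|I_n(g) - F_n(g)| \;\le\; C\,\Delta_n^2 \sum_{X_{j,k}} \max_{X_{j,k}} |D^2 g|
\]
on any subdomain on which $g$ is $C^2$. For the first claim, linearity collapses the left-hand side into $F_n(g_1) - I_n(g_1)$ with $g_1 := f - f_1$. For the second, splitting $D_n = D_n^{\beta} \cup (D_n \setminus D_n^{\beta})$ gives
\[
F_n(f) - I_n(f) + I_n^{\beta}(f_m) - F_n^{\beta}(f_m)
\;=\; \bigl[F_n^{\beta}(g_m) - I_n^{\beta}(g_m)\bigr] - \bigl[(I_n - I_n^{\beta})(f) - (F_n - F_n^{\beta})(f)\bigr],
\]
with $g_m := f - f_m$.

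The last bracket, supported on $D_n \setminus D_n^{\beta}$, is harmless: since $\psi$ has no zero in $[-\pi,\pi)^2$ other than the origin, $f$ and all its derivatives are bounded on the compact region $D_n \setminus D_n^{\beta}$, and summing the midpoint error over the $\mathcal{O}(n^2)$ cells there contributes $\mathcal{O}(\Delta_n^2 \cdot n^2) = \mathcal{O}(1)$. The essential task is then to control $|D^2 g_m|$ near the origin. Using Lemma~\ref{Lemma:psi-and-derivatives}, write $\psi = p_m + R_m$ with $\partial^\alpha R_m(\mathbf{x}) = \mathcal{O}(|\mathbf{x}|^{2m+2-|\alpha|})$; positive definiteness of $p_1$ (the leading part of both $\psi$ and $p_m$) together with $\beta < 1$ yields $\psi(\mathbf{x}), p_m(\mathbf{x}) \asymp |\mathbf{x}|^2$ on $D_n^{\beta}$. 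Differentiating the identity $g_m = -R_m/(\psi \, p_m)$ by Leibniz and tracking homogeneities then gives
\[
|\partial^\alpha g_m(\mathbf{x})| \;\lesssim\; |\mathbf{x}|^{2m-2-|\alpha|},
\qquad |\alpha| \le 2,
\]
and hence $\max_{X_{j,k}} |D^2 g_m| \lesssim |\mathbf{t}_{j,k}|^{2m-4}$ whenever $|\mathbf{t}_{j,k}| \gtrsim \Delta_n$.

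Substituting into the quadrature bound finishes both claims. For $m=1$ the contributions are $\Delta_n^2 |\mathbf{t}_{j,k}|^{-2} \asymp (j^2+k^2)^{-1}$, and summing over $(j,k)\ne(0,0)$ with $|j|,|k|\lesssim n$ produces $\mathcal{O}(\log n)$. For $m\ge 2$, the bound $|D^2 g_m| = \mathcal{O}(1)$ near the origin leaves a total of $\mathcal{O}(\Delta_n^2 \cdot n^2) = \mathcal{O}(1)$. The main obstacle I anticipate is the bookkeeping in the derivative estimate for $g_m$: the Leibniz expansion of $\partial^\alpha[R_m/(\psi p_m)]$ breaks into many terms of mixed type, and each must be checked uniformly to attain at most the claimed homogeneity $2m-2-|\alpha|$, relying simultaneously on the pointwise bounds for $R_m$, the quadratic lower bound on $\psi \, p_m$, and the fact that working inside $D_n^{\beta}$ keeps $p_m$ from vanishing.
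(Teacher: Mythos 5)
Your proposal is correct and follows essentially the same route as the paper: both reduce the combination to the product-midpoint cubature error of $h_m=f-f_m$ over the cells of $D_n^{\beta}$, controlled by the key estimate $|\partial_k^2 h_m(\mathbf{x})|\lesssim \Vert\mathbf{x}\Vert^{2m-4}$ (the paper's Lemma~\ref{lemma:der-est}, proved by exactly the kind of Leibniz bookkeeping you describe), plus an $\mathcal{O}(1)$ contribution from the $\mathcal{O}(n^2)$ cells in $D_n\setminus D_n^{\beta}$ where everything is smooth, and then invoke the radial sum bound (Lemma~\ref{lemma:radialsum}) to get $\mathcal{O}(\log n)$ when $2m-4=-2$ and $\mathcal{O}(1)$ when $2m-4>-2$. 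The only cosmetic difference is that you handle \eqref{eq:simpleOlog} directly on all of $D_n$ rather than routing it through $D_n^{\beta}$, which is legitimate since $p_1$ is positive definite globally and $\psi\asymp\Vert\mathbf{x}\Vert^2$ on the whole of $D_n$.
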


We give the proofs of Theorems~\ref{thm:main1} and \ref{thm:main2} in \S\ref{sec:lemmata}.

The use of \eqref{eq:simpleOlog} for the triangular lattice reduces to our method in \cite{BEY}, whereas here we study
the asymptotic behavior of $I_n(f_1)$ and $F_n(f_1)$ for general planar lattices and thereby obtain the asymptotic behavior of the difference
$F_n(f) - I_n(f)$ upto an error of $\mathcal{O}(\log n)$. The integral $I_n(f)$ depends on the underlying lattice, and the evaluation may be carried out for a given lattice. Here we demonstrate this evaluation for two different examples in \S\ref{sec:square}.

Formula \eqref{eq:Olog-O1} provides the possibility of obtaining improved approximations when implemented for (the optimal choice) $m=2$, and this is left for future work.

In our analysis, the sum $F_n(f_1)$ will be related to the sum
\begin{equation} \label{eq:G_n}
	G_n := \sum_{j,k = 1}^{n-1} \big(\dfrac{1}{aj^2-bjk+ck^2} + \dfrac{1}{aj^2+bjk+ck^2}\big).
\end{equation}

\begin{Thm} \label{thm:Gn}
Let $aj^2+bjk+ck^2$ be a positive definite binary quadratic form with real coefficients and discriminant
\begin{align}
	d := b^2 - 4ac < 0.
\label{eq: 1.1}
\end{align}
As $n \to \infty$, we have
\begin{align} 
	G_n
	& = {2\pi \over \sqrt{|d|}} \log n
	+ \left[ {2\pi \over \sqrt{|d|}} \gamma - {\pi^2 \over 6} \big( {1 \over a} + {1 \over c} \big) - {4\pi \over \sqrt{|d|}} \log |\eta(\mu)| \right.
	\label{eq:Gnasymp} \\
	& \qquad \qquad \hspace{1.7cm}
	\left. + {1 \over \sqrt{|d|}} \Big( \big( {\pi \over 2} - \arctan \big( {c-a \over \sqrt{d}} \big) \big) \log \big( {c \over a} \big) - C(a,b,c) \Big) \right]
\nonumber \\
  	&  + \left[ {1 \over a} + {1 \over c} - {\pi \over \sqrt{|d|}} \right] {1 \over n}
	+ \left[ {1 \over 2} \big( {1 \over a} + {1 \over c} \big)
	+ {1 \over 12} \big( {1 \over a-b+c} + {1 \over a+b+c} \big) - {\pi \over 6\sqrt{|d|}}	\right] {1 \over n^2}
	\nonumber \\
	& + \left[ {1 \over 6} \big( {1 \over a} + {1 \over c} \big) + {1 \over 12} \big( {1 \over a-b+c} + {1 \over a+b+c} \big)	\right] {1 \over n^3}
	+ \mathcal{O} \Big({\log n \over n^4}\Big) \nonumber
\end{align}
where
\begin{equation} \label{eq:mu}
	\mu = {-b + \sqrt{|d|} i \over 2c},
\end{equation}
$\eta$ is the Dedekind eta-function and, with $\operatorname{Cl}_2$ denoting the Clausen function,
\begin{multline}
	C(a,b,c) :=
	\mathrm{Cl}_2\Big(\pi -2\arctan\big({2a-b\over \sqrt{|d|}}\big)\Big)
	+\mathrm{Cl}_2\Big(\pi-2\arctan\big({2a+b\over \sqrt{|d|}}\big)\Big)
	\\
	+ \mathrm{Cl}_2 \Big(\pi-2\arctan\big({2c-b\over \sqrt{|d|}}\big)\Big) 
	+\mathrm{Cl}_2\Big(\pi-2\arctan\big({2c+b\over \sqrt{|d|}}\big)\Big).
	\label{eq:Cabc}
\end{multline}
\end{Thm}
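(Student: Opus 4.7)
The plan is to reduce $G_n$ to a single sum over $j$ via partial fractions in $k$ and then extract the full asymptotic expansion using Stirling's expansion of the digamma function $\psi$, Euler--Maclaurin, Kronecker's second limit formula for the Dedekind eta function, and dilogarithm/Clausen identities. Factoring $aj^2+bjk+ck^2=c(k-j\mu)(k-j\bar\mu)$ with $\mu$ as in \eqref{eq:mu}, partial fractions give
\[
\frac{1}{aj^2+bjk+ck^2}+\frac{1}{aj^2-bjk+ck^2}=\frac{2}{j\sqrt{|d|}}\,\mathrm{Im}\!\left[\frac{1}{k-j\mu}-\frac{1}{k+j\mu}\right],
\]
and the identity $\sum_{k=1}^{n-1}(k-\alpha)^{-1}=\psi(n-\alpha)-\psi(1-\alpha)$ collapses the $k$-sum to yield
\[
G_n=\frac{2}{\sqrt{|d|}}\sum_{j=1}^{n-1}\frac{1}{j}\,\mathrm{Im}\bigl[\psi(1+j\mu)-\psi(1-j\mu)+\psi(n-j\mu)-\psi(n+j\mu)\bigr].
\]

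\textbf{Small-argument piece.} The reflection identity $\psi(1-z)-\psi(z)=\pi\cot(\pi z)$ combined with $\psi(1+z)=\psi(z)+1/z$ reduces $\psi(1+j\mu)-\psi(1-j\mu)$ to $1/(j\mu)-\pi\cot(\pi j\mu)$. Expanding $\pi\cot(\pi j\mu)$ as a geometric series in $q^j$ with $q=e^{2\pi i\mu}$ and summing in $j$, the resulting double sum over $j,m\ge 1$ is recognised, via $\log|\eta(\mu)|=-\pi\,\mathrm{Im}(\mu)/12+\sum_{m\ge 1}\log|1-q^m|$, to contribute (using $\mathrm{Im}(\mu)=\sqrt{|d|}/(2c)$ and $|\mu|^2=a/c$)
\[
\tfrac{2\pi}{\sqrt{|d|}}(\log n+\gamma)-\tfrac{4\pi}{\sqrt{|d|}}\log|\eta(\mu)|-\tfrac{\pi^2}{6}\bigl(\tfrac{1}{a}+\tfrac{1}{c}\bigr)+\mathcal{O}(1/n).
\]
The leading $\log n$ arises because $-\pi\cot(\pi j\mu)\to\pi$ as $j\to\infty$ (since $\mathrm{Im}(\pi j\mu)\to+\infty$) combined with $\sum_{j=1}^{n-1}1/j=\log n+\gamma+O(1/n)$.

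\textbf{Large-argument piece.} Since $n\pm j\mu$ stays off the negative real axis uniformly for $1\le j\le n-1$, Stirling's expansion $\psi(z)=\log z-\frac{1}{2z}-\sum_{r\ge 1}B_{2r}/(2rz^{2r})$ applies. Interpreting $\sum_{j=1}^{n-1}\frac{1}{j}g(j/n)$ as a Riemann sum in $u=j/n$ with mesh $1/n$, the leading contribution becomes
\[
\frac{2}{\sqrt{|d|}}\int_0^1\frac{1}{u}\,\mathrm{Im}\!\left[\log\frac{1-u\mu}{1+u\mu}\right]du=\frac{2}{\sqrt{|d|}}\,\mathrm{Im}\bigl[\mathrm{Li}_2(-\mu)-\mathrm{Li}_2(\mu)\bigr].
\]
Applying the inversion formula $\mathrm{Li}_2(z)+\mathrm{Li}_2(1/z)=-\pi^2/6-\tfrac{1}{2}\log^2(-z)$, the difference of dilogarithms is rewritten in terms of Clausen values $\mathrm{Cl}_2(2\arg(1\pm\mu))$ and $\mathrm{Cl}_2(2\arg(1\pm\mu^{-1}))$. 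A direct computation gives $2\arg(1\pm\mu)=\pm(\pi-2\arctan((2c\mp b)/\sqrt{|d|}))$ and similarly for $\mu^{-1}$ with $a$ in place of $c$; hence the four Clausen values assemble into $-C(a,b,c)/\sqrt{|d|}$, while the $\log^2(-z)$ contributions, combined via $|\mu|^2=a/c$, produce the cross-term $\frac{1}{\sqrt{|d|}}(\pi/2-\arctan((c-a)/\sqrt{|d|}))\log(c/a)$. The $1/n$, $1/n^2$, $1/n^3$ corrections come from the Bernoulli remainders in Stirling and from Euler--Maclaurin boundary terms; the coefficients $1/a$, $1/c$, $1/(a\pm b+c)$ appear as the boundary/corner values $Q(1,0)^{-1}$, $Q(0,1)^{-1}$, $Q(1,\pm 1)^{-1}$.

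The main obstacle is the large-argument piece: Stirling, Euler--Maclaurin, and Kronecker's formula are standard once the partial-fraction reduction is in place, but identifying the precise Clausen arguments from the dilogarithms $\mathrm{Li}_2(\pm\mu)$ and verifying that the $\log^2$ term from the inversion formula yields exactly the $\log(c/a)$ cross-term with the stated arctan coefficient requires careful dilogarithm bookkeeping. This is the combinatorial heart of the proof; everything else follows from the standard asymptotic toolbox.
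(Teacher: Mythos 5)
Your proposal follows essentially the same route as the paper: partial fractions in $k$ collapse the inner sum to digamma values, the $\psi(1\pm j\mu)$ terms are handled by the recurrence/reflection formulas and the Lambert series giving $\log|\eta(\mu)|$ (with the $q^{1/24}$ prefactor supplying the $-\pi^2/(6c)$), and the $\psi(n\pm j\mu)$ terms by the Stirling expansion followed by Euler--Maclaurin, with the resulting $\mathrm{Im}\bigl[\mathrm{Li}_2(\mu)-\mathrm{Li}_2(-\mu)\bigr]$ converted to Clausen values. The one point to tighten is the dilogarithm-to-Clausen step: the inversion formula $\mathrm{Li}_2(z)+\mathrm{Li}_2(1/z)=-\pi^2/6-\tfrac{1}{2}\log^2(-z)$ merely trades $\mathrm{Li}_2(\pm\mu)$ for $\mathrm{Li}_2(\pm\mu^{-1})$, which is still off the unit circle, so you additionally need Kummer's formula for $\mathrm{Im}\,\mathrm{Li}_2(re^{i\theta})$ (this is what the paper uses, and it directly produces the Clausen arguments $2\arg(1\pm\mu)=\pm\bigl(\pi-2\arctan\bigl(\tfrac{2c\mp b}{\sqrt{|d|}}\bigr)\bigr)$ that you correctly identify).
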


\begin{Rem} \label{rem:Gn}
Since $G_n$ is invariant under the interchange of $a$ and $c$, this result must also have the same property.
The only seemingly non-symmetrical contribution in (\ref{eq:Gnasymp}) is $\displaystyle -{4 \pi \over \sqrt{|d|}} \log |\eta(\mu)|
+ {1 \over \sqrt{|d|}} {\pi \over 2} \log {c \over a} $. But if one does the interchange in these terms, the value doesn't change
by virtue of the relation $\displaystyle \eta\big(-{1\over \tau}\big) = (-i\tau)^{{1\over 2}}\eta(\tau)$ for any $\tau$ with $\Im \tau > 0$
(using the branch $1^{{1\over 2}} = 1$). The invariance when $b$ is replaced by $-b$ follows from 
$|\eta(\mu)| = |\eta(-\overline{\mu})|$.
\end{Rem}

We give the proof of Theorem~\ref{thm:Gn} in \S\ref{sec:Gn}. Theorem~\ref{thm:Gn} generalizes the calculations in \S 6 of \cite{BEY} which was for the case of the triangular lattice.

Concerning $F_n(f_1)$ and $I_n(f_1)$, we set 
\begin{equation} \label{eq:abc}
	a := \sum_{\ell=1}^{|\Phi|} s_{1,\ell}^2,
	\qquad
	b := 2\sum_{\ell=1}^{|\Phi|} s_{1,\ell} s_{2,\ell},
	\qquad
	c := \sum_{\ell=1}^{|\Phi|} s_{2,\ell}^2,
\end{equation}
(so that $a,b,c \in \mathbb{Z}$). By construction, $d = -4 \det(S^TS)$ (see \eqref{eq:S}).
With this notation, the asymptotic behavior of $F_n(f_1)$ and $I_n(f_1)$ are given below.
\begin{Thm} \label{thm:Fnf1}
As $n \to \infty$, we have
\begin{align}
	& F_n(f_1)
	= {2|\Phi| \over \pi \sqrt{|d|}} n^2 \log n 
	+ {|\Phi| \over \pi^2 \sqrt{|d|}} 
	\Bigg[ 2\pi \big( \gamma - \log 2 \big) - 4\pi \log |\eta(\mu)| 
	\nonumber \\
	& \hspace{4.9cm} + \big( {\pi \over 2} - \arctan \big( {c-a \over \sqrt{|d|}} \big) \big) \log \big( {c \over a} \big) - C(a,b,c) \Bigg] n^2
	\nonumber \\ 
	& + {|\Phi| \over \pi^2} \Bigg[ {1 \over 3} \Big( {1 \over a-b+c} + {1 \over a+b+c} + {\pi \over \sqrt{|d|}} \Big)
	- {1+(-1)^n \over 2} \Big( {\pi \over \sqrt{|d|}} +{2 \over a-b+c} \Big) \Bigg]
	+ \mathcal{O} \big( {\log n \over n^2} \big).
	\nonumber \\
	\label{eq:Fnf1}
\end{align}
\end{Thm}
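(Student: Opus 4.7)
The plan. The key reduction is to express $F_n(f_1)$ as a prefactor times a lattice sum of $1/Q(j,k)$ with $Q(j,k) := aj^2+bjk+ck^2$, and then to relate that lattice sum to the sum $G_N$ of Theorem~\ref{thm:Gn} (with $N$ of order $n/2$) plus an axial piece handled by the classical asymptotic of $\sum 1/j^2$. Using Lemma~\ref{Lemma:psi-and-derivatives} together with \eqref{eq:abc}, the Taylor approximant reads $p_1(\mathbf{x}) = (ax_1^2+bx_1x_2+cx_2^2)/(2|\Phi|)$, whence
\[
	F_n(f_1) = \dfrac{|\Phi|\,n^2}{2\pi^2}\, T_n, \qquad T_n := \!\!\!\!\!\!\!\!\sum_{(j,k)\in R_n\setminus\{(0,0)\}}\!\!\!\!\!\!\frac{1}{Q(j,k)},
\]
where $R_n = \{-(n-1)/2,\dots,(n-1)/2\}^2$ for $n$ odd and $R_n = \{-n/2,\dots,n/2-1\}^2$ for $n$ even. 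I would split $T_n$ into an axial part supported on $jk=0$ and an off-axial part. The axial part equals $(\tfrac{2}{a}+\tfrac{2}{c})\sum_{j=1}^{N}1/j^2$, with $N=(n-1)/2$ or $n/2-1$ according to parity, and in the even case with the additional single-point contribution $(\tfrac{1}{a}+\tfrac{1}{c})/M^2$ where $M=n/2$; I would expand it using $\sum_{j=1}^N 1/j^2 = \pi^2/6 - 1/N + 1/(2N^2) + \cdots$.

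For $n$ odd, the symmetries $Q(-j,-k)=Q(j,k)$ and $Q(j,-k)=Q(-j,k)=aj^2-bjk+ck^2$ identify the off-axial part as exactly $2G_{(n+1)/2}$. Substituting the asymptotic of $G_{(n+1)/2}$ from Theorem~\ref{thm:Gn} and expanding using $\log((n+1)/2)=\log n-\log 2+1/n+O(1/n^2)$ and $((n+1)/2)^{-k}=(2/n)^k(1+O(1/n))$, I would observe the three crucial cancellations: (i) the $(\tfrac{2}{a}+\tfrac{2}{c})\cdot\pi^2/6$ from the axes kills the $-\tfrac{\pi^2}{6}(\tfrac{1}{a}+\tfrac{1}{c})$ inside the constant $C_0$ of \eqref{eq:Gnasymp}, leaving precisely the $n^2$-coefficient displayed in \eqref{eq:Fnf1}; (ii) the $1/n$-order contributions of the axial and $G$ parts sum to zero, producing no $O(n)$ term; and (iii) the $1/n^2$-order contributions assemble into $\tfrac{|\Phi|}{3\pi^2}\big(\tfrac{1}{a-b+c}+\tfrac{1}{a+b+c}+\tfrac{\pi}{\sqrt{|d|}}\big)$, which is the constant in \eqref{eq:Fnf1} when $\tfrac{1+(-1)^n}{2}=0$.

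The main obstacle is the $n$-even case, where the asymmetry of $R_n$ forces the off-axial part to equal $2G_{n/2+1}$ minus three boundary-row correction sums $B_1,B_2,B_3$ of the form $\sum_{k=1}^{M}1/Q(M,\pm k)$ and $\sum_{j=1}^{M}1/Q(j,\pm M)$. Each $B_i$ is itself of size $O(1/n)$ and would, a priori, contribute an $O(n)$ term to $F_n(f_1)$ after multiplication by the prefactor. Applying Euler--Maclaurin to each $B_i$ in the rescaled variable $k/M$ or $j/M$, the leading behavior comes out as Riemann sums converging to $\tfrac{1}{M}\int_{-1}^{1}dt/(a+bt+ct^2)$ and $\tfrac{1}{M}\int_{-1}^{1}dt/(at^2+bt+c)$, so that the cancellation of the unwanted $O(n)$ term reduces to the identity
\[
	\int_{-1}^{1}\frac{dt}{a+bt+ct^2}+\int_{-1}^{1}\frac{dt}{at^2+bt+c} = \frac{2\pi}{\sqrt{|d|}},
\]
which I would prove by expressing each integral as a difference of arctangents and using the arctan addition formula to show that the four resulting values add to $\pi$ (treating the cases $a\lessgtr c$ separately to account for the principal-branch correction). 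This identity exactly matches the additional $\tfrac{2\pi}{\sqrt{|d|}}$ that $G_{n/2+1}$ picks up in its $1/n$ coefficient (coming from the $\log(1+2/n)=2/n+\ldots$ expansion, absent in the $n$-odd case), so the spurious $O(n)$ term disappears. The subleading $O(1/M^2)$ corrections from Euler--Maclaurin, expressible through the boundary values $1/(a\pm b+c)$ and $1/c$, then combine with the $1/n^2$ coefficient of $G_{n/2+1}$ and with the extra axial contribution $(\tfrac{1}{a}+\tfrac{1}{c})\cdot 4/n^2$ to yield exactly the additional term $-\tfrac{|\Phi|}{\pi^2}\big(\tfrac{\pi}{\sqrt{|d|}}+\tfrac{2}{a-b+c}\big)$ attached to the $\tfrac{1+(-1)^n}{2}$ factor in \eqref{eq:Fnf1}. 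The stated error $\mathcal{O}(\log n/n^2)$ is inherited from the $\mathcal{O}(\log n/N^4)$ error of Theorem~\ref{thm:Gn} with $N=n/2+O(1)$, after multiplication by the $n^2$ prefactor.
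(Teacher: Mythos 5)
Your proposal is correct and follows essentially the same route as the paper: the paper likewise writes $F_n(f_1)=\frac{|\Phi|n^2}{2\pi^2}\sum 1/Q(j,k)$, splits off the axial part $(\frac1a+\frac1c)H_N$, identifies the off-axial part with $G_N$ (with $N=\frac{n+1}{2}$ or $\frac{n+2}{2}$), and in the even case subtracts a boundary sum $U_{n/2}$ (your $B_i$'s) treated by Euler--Maclaurin, whose leading term is exactly your integral identity $\int_0^1 g(x)\,dx=\frac{2\pi}{\sqrt{|d|}}$. The only differences are cosmetic bookkeeping of the boundary rows and corner terms.
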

\begin{Thm} \label{thm:Inf1}
If $n$ is odd,
\[
	I_n(f_1) = \frac{2|\Phi|}{\pi \sqrt{|d|}} \, n^2 \log n. 
\]
If $n$ is even, as $n \to \infty$,
\[
	I_n(f_1) = \frac{2|\Phi|}{\pi \sqrt{|d|}} \, n^2 \log n
	- \big( \frac{|\Phi|}{\pi\sqrt{|d|}} + \frac{2|\Phi|}{\pi^2}{1 \over a-b+c} \big) + \mathcal{O} \big( {1 \over n^2} \big).
\]
\end{Thm}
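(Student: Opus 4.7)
The plan is to pass to polar coordinates. Since \eqref{eq:abc} and the definition of $p_1$ give $f_1(\mathbf{x}) = 2|\Phi|/Q(\mathbf{x})$ with $Q(\mathbf{x})=ax_1^2+bx_1x_2+cx_2^2$ a positive definite quadratic form and $d=-4\det(S^{T}S)$, writing $\mathbf{x}=\rho(\cos\theta,\sin\theta)$ yields $Q=\rho^{2}g(\theta)$ with $g(\theta)=a\cos^{2}\theta+b\cos\theta\sin\theta+c\sin^{2}\theta$, and the Jacobian cancels the radial pole up to a logarithm:
\[
	\iint_{D_n} f_1\, d\mathbf{x} = 2|\Phi|\int_0^{2\pi}\frac{\log(\rho_{\mathrm{out}}(\theta)/\rho_{\mathrm{in}}(\theta))}{g(\theta)}\,d\theta.
\]
For odd $n$ the inner square is the outer one rescaled by $1/n$, so $\rho_{\mathrm{out}}/\rho_{\mathrm{in}}\equiv n$; the classical identity $\int_0^{2\pi} d\theta/g(\theta)=4\pi/\sqrt{|d|}$ (via $g(\theta)=\tfrac{a+c}{2}+\tfrac{a-c}{2}\cos 2\theta + \tfrac{b}{2}\sin 2\theta$ and $\int_0^{2\pi} d\phi/(A+B\cos\phi+C\sin\phi)=2\pi/\sqrt{A^{2}-B^{2}-C^{2}}$) then yields the exact formula $I_n(f_1)=\tfrac{2|\Phi|}{\pi\sqrt{|d|}}\,n^{2}\log n$.

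For even $n$ the outer square $R=[-\pi-\pi/n,\pi-\pi/n]^{2}$ is shifted by $-\tfrac{\pi}{n}(1,1)$ while $r$ is unchanged. Writing
\[
	\iint_{D_n} f_1 = \iint_{R'\setminus r} f_1 + \Bigl(\iint_R f_1 - \iint_{R'} f_1\Bigr),\qquad R':=[-\pi,\pi]^{2},
\]
the first summand reproduces the leading $n^{2}\log n$ term from the odd case. For the correction, the $\log\varepsilon$-divergences at the origin cancel between $\iint_R$ and $\iint_{R'}$, leaving $2|\Phi|\int_0^{2\pi}\alpha(\theta)/g(\theta)\,d\theta$ with $\alpha(\theta):=\log(\rho^{R}_{\mathrm{out}}/\rho^{R'}_{\mathrm{out}})$. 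In $Q_1$ and $Q_3$ a direct check gives $\alpha\equiv\log(1\mp 1/n)$; combining via the $\pi$-periodicity of $g$, their contribution is $\log(1-1/n^{2})\int_0^{\pi/2} d\theta/g(\theta)$.

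The main obstacle will be handling $Q_2$ and $Q_4$, where $\rho^{R}_{\mathrm{out}}$ is piecewise: the exit side of $R$ changes at $\theta^{\ast}=\pi-\arctan\tfrac{n-1}{n+1}$ and $\theta^{\ast\ast}=2\pi-\arctan\tfrac{n+1}{n-1}$, which lie within $1/n+O(1/n^{3})$ of $3\pi/4$ and $7\pi/4$ respectively. I would show that outside two short windows of length $O(1/n)$ around $3\pi/4$, the shifted sum $\alpha(\theta)+\alpha(\theta+\pi)$ on $(\pi/2,\pi)$ still equals $\log(1-1/n^{2})$, so that the bulk total combined with $Q_1+Q_3$ is $\log(1-1/n^{2})\int_0^{\pi} d\theta/g(\theta)=-\tfrac{2\pi}{n^{2}\sqrt{|d|}}+O(1/n^{4})$, accounting for the $-\tfrac{|\Phi|}{\pi\sqrt{|d|}}$ part of the claimed constant. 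Inside the two windows, substituting $\theta=3\pi/4+u$ and using $g(3\pi/4)=(a-b+c)/2$, $\log|\tan\theta|=-2u+O(u^{2})$, together with endpoint widths $u_1,u^{\ast}=1/n+O(1/n^{3})$, direct integration of the piecewise $\alpha(\theta)+\alpha(\theta+\pi)$ contributes the combined $-\tfrac{8|\Phi|}{n^{2}(a-b+c)}+O(1/n^{4})$. Summing the pieces and multiplying by $1/\Delta_n^{2}=n^{2}/(4\pi^{2})$ yields the stated asymptotic, with the claimed $O(1/n^{2})$ error arising from the subleading Taylor remainders in $g^{-1}$ and $\log|\tan\theta|$.
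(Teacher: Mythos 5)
Your argument is correct, and the odd case coincides with the paper's computation: both rest on the observation that in polar coordinates the radial integral of $f_1$ produces $\log(\rho_{\mathrm{out}}/\rho_{\mathrm{in}})$ and that $\int_0^{2\pi} g(\theta)^{-1}\,d\theta = 4\pi/\sqrt{|d|}$, together with the similarity of the inner and outer squares. Where you genuinely diverge from the paper is the even case. The paper exploits the central symmetry $f_1(-\mathbf{x})=f_1(\mathbf{x})$ to fold the shifted square $[-\pi-\tfrac{\pi}{n},\pi-\tfrac{\pi}{n}]^2$ into the two \emph{centered} regions $E(\pi+\tfrac{\pi}{n})$ and $E(\pi-\tfrac{\pi}{n})$ minus a small corner box $E_n$ near $(\pi,-\pi)$; on the centered regions the boundary-radius ratio is again constant in $\theta$, so the exact formula applies and yields $\log(n+1)+\log(n-1)=2\log n+\log(1-n^{-2})$, while the corner box is far from the singularity and is evaluated by the midpoint-rule estimate (Lemma~\ref{lemma:old}) as $f_1(-\pi,\pi)+\mathcal{O}(n^{-2})=\tfrac{2|\Phi|}{\pi^2(a-b+c)}+\mathcal{O}(n^{-2})$. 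You instead compare the shifted square directly with $[-\pi,\pi]^2$ and compute the angular integral of $\log(\rho^{R}_{\mathrm{out}}/\rho^{R'}_{\mathrm{out}})/g$, which forces the piecewise corner analysis in $Q_2$ and $Q_4$; your bulk term $\log(1-n^{-2})\cdot\tfrac{2\pi}{\sqrt{|d|}}$ and your window contribution $-\tfrac{8|\Phi|}{n^2(a-b+c)}$ are both correct and, after multiplying by $n^2/(4\pi^2)$, reproduce exactly the two pieces $-\tfrac{|\Phi|}{\pi\sqrt{|d|}}$ and $-\tfrac{2|\Phi|}{\pi^2(a-b+c)}$ of the stated constant (note that the relevant exceptional set is really the union of the $Q_2$ window $(3\pi/4,\theta^\ast)$ and the pullback $(\theta^{\ast\ast}-\pi,3\pi/4)$ of the $Q_4$ window, which lie on opposite sides of $3\pi/4$ — your phrasing ``two short windows around $3\pi/4$'' is consistent with this, but it is worth making explicit). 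The trade-off: the paper's folding trick avoids all corner bookkeeping at the cost of introducing the auxiliary box $E_n$ and one application of the quadrature lemma, whereas your route is self-contained within the polar computation but requires the careful (and, in your sketch, only outlined) matching of the exit-side switches; since your claimed intermediate values all check out, this is a sketch that can be completed, not a gap.
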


The proof of Theorem~\ref{thm:Fnf1} is given in \S\ref{sec:Fnf1}, and that of Theorem~\ref{thm:Inf1} is in \S\ref{sec:lemmata}.

\section{Lemmata and Proofs of Theorems \ref{thm:main1}, \ref{thm:main2} and \ref{thm:Inf1}} \label{sec:lemmata}

First, we study the function $\psi$ in \eqref{eq:psi} and its partial derivatives
\begin{equation} \label{eq:partialderivatives}
	\partial_k \psi(\mathbf{x}) = \frac{1}{|\Phi|} \sum\limits_{\ell=1}^{|\Phi|} s_{k, \ell} \sin(\mathbf{s}_{\ell} \cdot \mathbf{x}) 
	\quad
	\text{and}
	\quad
	\partial_k^2 \psi(\mathbf{x}) = \frac{1}{|\Phi|} \sum\limits_{\ell=1}^{|\Phi|} s_{k, \ell}^2 \cos(\mathbf{s}_{\ell} \cdot \mathbf{x}),
\end{equation}
where $\displaystyle{\partial_k = {\partial \over \partial x_k}}$, for $k=1,2$.

\begin{Lem} \label{lemma:phi}
For all $\mathbf{x} \in \mathbb{R}^2$, we have
\begin{enumerate}
\item $|\psi(\mathbf{x})| \le \frac{1}{2|\Phi|} \Vert \mathbf{x} \Vert^2 \sum\limits_{\ell=1}^{|\Phi|} \Vert \mathbf{s}_{\ell} \Vert^2 \le \frac{1}{2} \overline{s}^2 \Vert \mathbf{x} \Vert^2$.
\item $|\partial_k \psi(\mathbf{x})| \le \frac{1}{|\Phi|}\Vert \mathbf{x}\Vert \sum\limits_{\ell=1}^{|\Phi|} |s_{k, \ell}| \Vert \mathbf{s}_{\ell} \Vert \le \overline{s}^2 \Vert \mathbf{x}\Vert$.
\item $|\partial_k^2 \psi(\mathbf{x})| \le \frac{1}{|\Phi|} \sum\limits_{\ell=1}^{|\Phi|} s_{k, \ell}^2 \le \overline{s}^2$.
\end{enumerate}
\end{Lem}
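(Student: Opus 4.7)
All three bounds are elementary consequences of the pointwise inequalities $|\sin t|\le |t|$ and $|\cos t|\le 1$, combined with the Cauchy--Schwarz estimate $|\mathbf{s}_\ell\cdot\mathbf{x}|\le \Vert\mathbf{s}_\ell\Vert\,\Vert\mathbf{x}\Vert$, applied term-by-term to the closed forms for $\psi$ in \eqref{eq:psi} and for its first and second partial derivatives in \eqref{eq:partialderivatives}. The plan is to treat each of (1)--(3) in turn, in each case first establishing the finer bound that contains the explicit sum over $\ell$, and then deducing the coarser uniform bound by invoking the definition \eqref{eq:sbar} of $\overline{s}$ together with $|\Phi|\cdot \overline{s}^2$ as a crude majorant.

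For (1), I would start from the manifestly non-negative representation $\psi(\mathbf{x}) = \frac{2}{|\Phi|}\sum_\ell \sin^2(\frac{1}{2}\mathbf{s}_\ell\cdot\mathbf{x})$, so that the absolute value on the left is vacuous, apply $\sin^2(\frac{1}{2}\mathbf{s}_\ell\cdot\mathbf{x})\le \frac{1}{4}(\mathbf{s}_\ell\cdot\mathbf{x})^2$ followed by Cauchy--Schwarz in each summand, and then sum over $\ell$; the second inequality follows from $\Vert\mathbf{s}_\ell\Vert^2\le \overline{s}^2$ uniformly in $\ell$. For (2), I would take absolute values term-by-term in $\partial_k\psi(\mathbf{x}) = \frac{1}{|\Phi|}\sum_\ell s_{k,\ell}\sin(\mathbf{s}_\ell\cdot\mathbf{x})$ and bound $|\sin(\mathbf{s}_\ell\cdot\mathbf{x})|\le \Vert\mathbf{s}_\ell\Vert\,\Vert\mathbf{x}\Vert$; the coarser form then comes from $|s_{k,\ell}|\le \Vert\mathbf{s}_\ell\Vert\le \overline{s}$, so that each product $|s_{k,\ell}|\,\Vert\mathbf{s}_\ell\Vert$ is dominated by $\overline{s}^2$. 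For (3), the trivial bound $|\cos(\mathbf{s}_\ell\cdot\mathbf{x})|\le 1$ immediately isolates $\frac{1}{|\Phi|}\sum_\ell s_{k,\ell}^2$, and $s_{k,\ell}^2\le \Vert\mathbf{s}_\ell\Vert^2\le \overline{s}^2$ closes out the last inequality.

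There is essentially no obstacle to overcome here: the lemma is a bookkeeping step that records the uniform $C^{2}$-type control of $\psi$ near the origin which will be needed later (for the Taylor remainder estimates underlying $f_m$ and for the singularity analysis in the proofs of Theorems~\ref{thm:main1} and \ref{thm:main2}). The only minor point of care is to distinguish cleanly between a norm and an absolute value of a component of $\mathbf{s}_\ell$, and to make sure that when passing from the finer to the coarser bound the factor $|\Phi|^{-1}$ cancels against the $|\Phi|$ terms in the resulting sum.
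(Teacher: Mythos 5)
Your proposal is correct and follows essentially the same route as the paper, whose proof is the one-line observation that all three bounds follow from the triangle and Cauchy--Schwarz inequalities together with $|\sin\theta|\le|\theta|$ and $|\cos\theta|\le 1$ applied to \eqref{eq:psi} and \eqref{eq:partialderivatives}. Your write-up merely makes these routine steps explicit, including the use of the $\sin^2$ representation of $\psi$ for part (1).
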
 
\begin{proof} These follow from the triangle and Cauchy-Schwarz inequalities, and the inequalities $|\sin \theta| \le |\theta|$ 
and $|\cos \theta| \le 1$ applied to \eqref{eq:psi} and \eqref{eq:partialderivatives}.
\end{proof}

\begin{Lem} \label{lemma:phi-lower}
Given $0 < \beta < 2\pi$, there exists a constant $C_1({\beta}) >0$ such that
$\psi(\mathbf{x}) \ge C_1({\beta}) \, \Vert \mathbf{x} \Vert^2$ for all $\mathbf{x} \in [-2\pi+\beta,2\pi-\beta] \times [-2\pi+\beta,2\pi-\beta]$.
\end{Lem}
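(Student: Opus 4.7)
The plan is to localize: obtain the quadratic lower bound on a small disk around the origin via the Taylor expansion of $\psi$, and on the complementary (compact) piece of $R_\beta := [-2\pi+\beta, 2\pi-\beta]^2$ via a continuity and compactness argument.

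First, I would verify that the zero set of $\psi$ on $R_\beta$ is exactly $\{\mathbf{0}\}$. The sum-of-squares expression in \eqref{eq:psi} forces $\mathbf{s}_\ell \cdot \mathbf{x} \in 2\pi \mathbb{Z}$ for every $\ell$; taking $\ell = 1, 2$ and using \eqref{eq:s1-s2} gives $x_1, x_2 \in 2\pi \mathbb{Z}$, and the constraint $|x_i| \le 2\pi - \beta < 2\pi$ leaves only $\mathbf{x} = \mathbf{0}$. Consequently $\psi > 0$ on $R_\beta \setminus \{\mathbf{0}\}$.

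Next, for the near-origin estimate, I would use that the second-order Taylor polynomial of $\psi$ at the origin is the quadratic form
\[
  p_1(\mathbf{x}) = \frac{1}{2|\Phi|} \sum_{\ell=1}^{|\Phi|} (\mathbf{s}_\ell \cdot \mathbf{x})^2,
\]
which, by singling out the $\ell = 1, 2$ contributions corresponding to $\mathbf{s}_1 = (1,0)$ and $\mathbf{s}_2 = (0,1)$, satisfies $p_1(\mathbf{x}) \ge \frac{1}{2|\Phi|} \Vert \mathbf{x} \Vert^2$ for all $\mathbf{x} \in \mathbb{R}^2$. Applying the elementary inequality $|\sin^2 u - u^2| \le \tfrac{1}{3} u^4$ (valid for all $u \in \mathbb{R}$, verifiable by showing that all derivatives up to order $4$ of $\tfrac{1}{3}u^4 - u^2 + \sin^2 u$ vanish at $0$ and that its fourth derivative is non-negative) to $u = \tfrac{1}{2} \mathbf{s}_\ell \cdot \mathbf{x}$, together with the Cauchy--Schwarz bound $|\mathbf{s}_\ell \cdot \mathbf{x}| \le \overline{s} \, \Vert \mathbf{x} \Vert$, I obtain
\[
  |\psi(\mathbf{x}) - p_1(\mathbf{x})| \le \frac{\overline{s}^4}{24} \Vert \mathbf{x} \Vert^4, \qquad \mathbf{x} \in \mathbb{R}^2.
\]
Choosing $r(\beta) > 0$ so small that $\overline{s}^4 r(\beta)^2 / 24 \le (4|\Phi|)^{-1}$ then gives $\psi(\mathbf{x}) \ge (4|\Phi|)^{-1} \Vert \mathbf{x} \Vert^2$ whenever $\Vert \mathbf{x} \Vert \le r(\beta)$.

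Finally, the complementary piece $K := \{ \mathbf{x} \in R_\beta : \Vert \mathbf{x} \Vert \ge r(\beta) \}$ is compact, and by the zero-set analysis $\psi$ is continuous and strictly positive on $K$. Setting $m := \min_K \psi > 0$ and $M := 2(2\pi - \beta)^2 \ge \max_{R_\beta} \Vert \mathbf{x} \Vert^2$ gives $\psi(\mathbf{x}) \ge m \ge (m/M) \Vert \mathbf{x} \Vert^2$ on $K$. Taking
\[
  C_1(\beta) := \min\Bigl\{ \frac{1}{4|\Phi|},\ \frac{m}{M} \Bigr\} > 0
\]
completes the proof. The only step that requires genuine care is the uniform control of the Taylor remainder $|\psi - p_1|$ on all of $\mathbb{R}^2$; the explicit elementary trigonometric inequality cited above handles this cleanly without needing any soft compactness argument just for the remainder.
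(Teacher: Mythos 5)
Your proof is correct and follows essentially the same strategy as the paper's: a quadratic lower bound near the origin obtained from the second-order Taylor approximation (with the $\ell=1,2$ terms supplying positive definiteness of the quadratic part), combined with a continuity-and-compactness argument on the remainder of the square. The only difference is cosmetic: the paper first discards the non-negative terms with $\ell \ge 3$ and Taylor-expands $2-\cos x_1-\cos x_2$ directly, whereas you keep all terms and control the full remainder $|\psi - p_1| \le \tfrac{\overline{s}^4}{24}\Vert\mathbf{x}\Vert^4$ uniformly (the same bound as the paper's Corollary~\ref{cor:phi-p} with $m=1$); both routes yield the constant $\tfrac{1}{4|\Phi|}$ near the origin.
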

\begin{proof}
For any $\mathbf{x} = (x_1,x_2) \in \mathbb{R}^2$, we have
\[
	\psi(\mathbf{x})
	\ge \frac{1}{|\Phi|} \sum_{\ell = 1}^{2} \big( 1- \cos(\mathbf{s}_{\ell} \cdot \mathbf{x}) \big)
	= \frac{1}{|\Phi|} \big( 2 - \cos x_1 - \cos x_2 \big),
\]
and by Taylor's theorem with the fourth order remainder at the origin
\[
	2 - \cos x_1 - \cos x_2
	= \frac{x_1^2+x_2^2}{2}-\frac{x_1^4 \, \cos y_1 + x_2^4 \, \cos y_2}{24}
\]
for some $\mathbf{y} = (y_1,y_2)$ on the line segment between $\mathbf{x}$ and the origin. Therefore
\begin{align*}
	\psi(\mathbf{x})
	& \ge \frac{1}{|\Phi|} \Big( \frac{x_1^2+x_2^2}{2}-\frac{x_1^4 \, \cos y_1 + x_2^4 \, \cos y_2}{24} \Big)
	\ge \frac{1}{|\Phi|} \Big( \frac{x_1^2+x_2^2}{2}-\frac{x_1^4 + x_2^4}{24} \Big)
	\\
	& \ge \frac{1}{|\Phi|} \Big( \frac{x_1^2+x_2^2}{2}-\frac{(x_1^2 + x_2^2)^2}{24} \Big)
	= \frac{12 -\Vert \mathbf{x} \Vert^2}{24 \, |\Phi|} \, \Vert \mathbf{x} \Vert^2.
\end{align*}
In particular, if $\Vert \mathbf{x} \Vert \le \sqrt{6}$, then $\psi(\mathbf{x}) \ge \frac{1}{4|\Phi|} \Vert \mathbf{x} \Vert^2$. Further, since $\psi$ is continuous and does not vanish on the compact set $[-2\pi+\beta,2\pi-\beta] \times [-2\pi+\beta,2\pi-\beta] \backslash B(0,\sqrt{6})$, there exists a constant $C_{2}(\beta)>0$ such that $\psi(\mathbf{x}) \ge C_2(\beta) \Vert \mathbf{x} \Vert^2$ for all $\mathbf{x}$ in this set. The result now follows by setting
$C_1({\beta})  = \min \{ C_2(\beta), \frac{1}{4|\Phi|} \}$.
\end{proof}

\begin{Cor} \label{cor:est-f2}
Given $0 < \beta < 2\pi$, there exists a constant $C_3({\beta}) >0$ such that
$|\partial_{k}^{2} f(\mathbf{x})| \le \dfrac{C_3({\beta})}{\norm{\mathbf{x}}^4}$ for all
$\mathbf{x} \in [-2\pi+\beta,2\pi-\beta] \times [-2\pi+\beta,2\pi-\beta] \backslash \{ \mathbf{0} \}$.
\end{Cor}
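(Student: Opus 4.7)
The plan is to compute $\partial_k^2 f$ explicitly from the definition $f = 1/\psi$, and then bound each term using Lemma~\ref{lemma:phi} to control the numerator and Lemma~\ref{lemma:phi-lower} to control the denominator from below. The quotient rule (or logarithmic differentiation) gives
\[
	\partial_k^2 f(\mathbf{x})
	= -\frac{\partial_k^2 \psi(\mathbf{x})}{\psi(\mathbf{x})^2}
	+ \frac{2 \bigl(\partial_k \psi(\mathbf{x})\bigr)^2}{\psi(\mathbf{x})^3},
\]
so that
\[
	|\partial_k^2 f(\mathbf{x})|
	\le \frac{|\partial_k^2 \psi(\mathbf{x})|}{\psi(\mathbf{x})^2}
	+ \frac{2 |\partial_k \psi(\mathbf{x})|^2}{\psi(\mathbf{x})^3}.
\]

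Next, I would invoke Lemma~\ref{lemma:phi} (parts 2 and 3) to get $|\partial_k \psi(\mathbf{x})| \le \overline{s}^2 \|\mathbf{x}\|$ and $|\partial_k^2 \psi(\mathbf{x})| \le \overline{s}^2$, and Lemma~\ref{lemma:phi-lower} to get $\psi(\mathbf{x}) \ge C_1(\beta) \|\mathbf{x}\|^2$ throughout the square $[-2\pi+\beta,2\pi-\beta]^2$. Substituting these into the estimate above yields
\[
	|\partial_k^2 f(\mathbf{x})|
	\le \frac{\overline{s}^2}{C_1(\beta)^2 \, \|\mathbf{x}\|^4}
	+ \frac{2 \overline{s}^4 \, \|\mathbf{x}\|^2}{C_1(\beta)^3 \, \|\mathbf{x}\|^6}
	= \frac{1}{\|\mathbf{x}\|^4}\left( \frac{\overline{s}^2}{C_1(\beta)^2} + \frac{2\overline{s}^4}{C_1(\beta)^3} \right),
\]
and it suffices to set $C_3(\beta) := \overline{s}^2/C_1(\beta)^2 + 2\overline{s}^4/C_1(\beta)^3$.

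There is essentially no obstacle here: the corollary is a direct consequence of the preceding two results, and the only thing to check is the dimensional bookkeeping --- namely that in the second term the factor $\|\mathbf{x}\|^2$ in the numerator (coming from squaring $|\partial_k \psi|$) exactly cancels two powers of $\|\mathbf{x}\|$ in the denominator (coming from the extra factor of $\psi \ge C_1(\beta)\|\mathbf{x}\|^2$), so that both contributions scale as $\|\mathbf{x}\|^{-4}$. The domain hypothesis $\mathbf{x} \ne \mathbf{0}$ guarantees that the right-hand side is finite, and the restriction to $[-2\pi+\beta,2\pi-\beta]^2$ is precisely what is needed to apply Lemma~\ref{lemma:phi-lower}.
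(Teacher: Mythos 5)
Your proposal is correct and follows essentially the same route as the paper: differentiate $f=1/\psi$ twice, bound $|\partial_k\psi|$ and $|\partial_k^2\psi|$ by Lemma~\ref{lemma:phi}, and bound $\psi$ from below by Lemma~\ref{lemma:phi-lower}. The only cosmetic difference is that the paper combines both terms over the common denominator $\psi^3$ and uses the upper bound $|\psi|\le\tfrac12\overline{s}^2\Vert\mathbf{x}\Vert^2$ on the numerator, whereas you estimate the two terms separately; both yield the same $\Vert\mathbf{x}\Vert^{-4}$ bound.
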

\begin{proof}
Since $\partial_{k}^{2} f(\mathbf{x}) = \displaystyle{\frac{2 (\partial_{k} \psi(\mathbf{x}))^2-\psi(\mathbf{x}) \, \partial_{k}^{2} \psi(\mathbf{x})}{(\psi(\mathbf{x}))^{3}}}$, we have
\[
	|\partial_{k}^{2} f(\mathbf{x})|
	\le \frac{2 |\partial_{k} \psi(\mathbf{x})|^2 + |\psi(\mathbf{x})| \, |\partial_{k}^2 \psi(\mathbf{x})|}{|\psi(\mathbf{x})|^{3}}
	\le \dfrac{{5 \over 2}\, \overline{s}^4 \, \Vert \mathbf{x}\Vert^2}{C_1(\beta)^3 \, \Vert \mathbf{x}\Vert^6}
	= \dfrac{C_3({\beta})}{\norm{\mathbf{x}}^4}
\]
by Lemmas \ref{lemma:phi} and \ref{lemma:phi-lower}.
\end{proof}

Next we derive the Taylor series approximations of $\psi$ and of its first and second order partial derivatives around the origin.

\begin{Lem} \label{Lemma:psi-and-derivatives}
For all $m \ge 0$, all $\mathbf{x} \in \mathbb{R}^2$, and $k = 1,2$, we have
\[
	\psi(\mathbf{x}) = \frac{1}{|\Phi|} \sum_{\ell = 1}^{|\Phi|}
	\Big\{
		\sum_{j=1}^{m} \frac{(-1)^{j+1}(\mathbf{s}_{\ell} \cdot \mathbf{x})^{2j}}{(2j)!}
		+ \frac{(-1)^{m}(\mathbf{s}_{\ell} \cdot \mathbf{x})^{2m+2}}{(2m+2)!} \cos (\mathbf{s}_{\ell} \cdot \mathbf{p}) 
	\Big\},
\]
\[
	\partial_k \psi(\mathbf{x}) = \frac{1}{|\Phi|} \sum_{\ell = 1}^{{|\Phi|}} s_{k,\ell}
	\Big\{
		\sum_{j=1}^{m} \frac{(-1)^{j+1}(\mathbf{s}_{\ell} \cdot \mathbf{x})^{2j-1}}{(2j-1)!}
		+ \frac{(-1)^{m}(\mathbf{s}_{\ell} \cdot \mathbf{x})^{2m+1}}{(2m+1)!} \cos (\mathbf{s}_{\ell} \cdot \mathbf{q}) 
	\Big\},
\]
\[
	\partial_k^2 \psi(\mathbf{x}) = \frac{1}{{|\Phi|}} \sum_{\ell = 1}^{{|\Phi|}} s_{k,\ell}^2
	\Big\{
		1+ \sum_{j=1}^{m} \frac{(-1)^{j}(\mathbf{s}_{\ell} \cdot \mathbf{x})^{2j}}{(2j)!}
		+ \frac{(-1)^{m+1}(\mathbf{s}_{\ell} \cdot \mathbf{x})^{2m+2}}{(2m+2)!} \cos (\mathbf{s}_{\ell} \cdot \mathbf{r}) 
	\Big\}
\]
for some $\mathbf{p},\mathbf{q},\mathbf{r} \in \mathbb{R}^2$ on the line segment between $\mathbf{x}$
and the origin which depend on $m$ and $\mathbf{x}$.
\end{Lem}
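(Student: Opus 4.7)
The plan is to apply the one-variable Taylor theorem with Lagrange remainder to the smooth curve $t \mapsto \psi(t\mathbf{x})$ on $[0,1]$, and similarly to $t \mapsto \partial_k \psi(t\mathbf{x})$ and $t \mapsto \partial_k^2 \psi(t\mathbf{x})$. Because this is a one-variable Taylor expansion along the segment $\{t\mathbf{x} : t \in [0,1]\}$, the intermediate point produced by the Lagrange form will be of the form $\theta \mathbf{x}$ with $\theta \in (0,1)$, automatically lying on the segment between $\mathbf{0}$ and $\mathbf{x}$. This is what lets us produce a single $\mathbf{p}$ (respectively $\mathbf{q}, \mathbf{r}$) that does not depend on the index $\ell$.

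For the first identity, I would start from $\psi(t\mathbf{x}) = \frac{1}{|\Phi|} \sum_\ell \bigl(1 - \cos(t\, \mathbf{s}_\ell \cdot \mathbf{x})\bigr)$ and differentiate term-by-term, using $\cos^{(k)}(u) = \cos(u + k\pi/2)$. At $t=0$, odd-order derivatives vanish and the $(2j)$-th derivative equals $\frac{(-1)^{j+1}}{|\Phi|}\sum_\ell (\mathbf{s}_\ell \cdot \mathbf{x})^{2j}$. Taylor's theorem through order $2m+1$ then gives, for some $\theta \in (0,1)$,
\[
\psi(\mathbf{x}) = \sum_{j=1}^{m} \frac{1}{(2j)!}\cdot\frac{(-1)^{j+1}}{|\Phi|}\sum_{\ell}(\mathbf{s}_\ell \cdot \mathbf{x})^{2j} + \frac{1}{(2m+2)!}\left.\frac{d^{2m+2}}{dt^{2m+2}}\psi(t\mathbf{x})\right|_{t=\theta},
\]
and a direct computation of the $(2m+2)$-th derivative produces the factor $(-1)^m(\mathbf{s}_\ell \cdot \mathbf{x})^{2m+2}\cos(\theta\, \mathbf{s}_\ell \cdot \mathbf{x})/|\Phi|$. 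Setting $\mathbf{p} = \theta \mathbf{x}$ rewrites the cosine as $\cos(\mathbf{s}_\ell \cdot \mathbf{p})$ and yields the desired identity.

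The second and third identities follow by the same procedure applied to $\partial_k \psi(t\mathbf{x}) = \frac{1}{|\Phi|}\sum_\ell s_{k,\ell}\sin(t\, \mathbf{s}_\ell \cdot \mathbf{x})$ and $\partial_k^2 \psi(t\mathbf{x}) = \frac{1}{|\Phi|}\sum_\ell s_{k,\ell}^2\cos(t\, \mathbf{s}_\ell \cdot \mathbf{x})$, using $\sin^{(2j)}(0) = 0$, $\sin^{(2j+1)}(0) = (-1)^j$, and the analogous pattern for cosine. Each application of Taylor's theorem produces its own intermediate value, which defines $\mathbf{q}$ and $\mathbf{r}$ as scalar multiples of $\mathbf{x}$.

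The entire argument is essentially a bookkeeping exercise, and the only real point to watch is the alternation of signs: one needs to verify that after differentiating $2m+2$ times and applying the prefactor $1/(2m+2)!$, the resulting coefficient matches $(-1)^m$ in the $\psi$-expansion, $(-1)^m$ in the $\partial_k \psi$-expansion (where the order of the remainder is $2m+1$ rather than $2m+2$), and $(-1)^{m+1}$ in the $\partial_k^2 \psi$-expansion. This is the only place the proof could go wrong, and it is routine once one writes out $\cos^{(k)}$ and $\sin^{(k)}$ carefully.
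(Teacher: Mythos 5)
Your proof is correct and is essentially the paper's argument: the paper invokes the two-variable Taylor theorem with Lagrange remainder about the origin (whose intermediate point automatically lies on the segment to $\mathbf{x}$) together with the identities $\partial_1^{j-k}\partial_2^{k}\cos(\mathbf{s}\cdot\mathbf{x})=s_1^{j-k}s_2^{k}\cos(\tfrac{j\pi}{2}+\mathbf{s}\cdot\mathbf{x})$ and the binomial theorem, which is precisely the computation you perform after restricting to the ray $t\mapsto t\mathbf{x}$ (the paper takes expansion order $n=2m+1$ for $\psi$ and $\partial_k^2\psi$ and $n=2m$ for $\partial_k\psi$, matching your choices). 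The sign bookkeeping does come out as you claim, and your single-variable restriction correctly produces one intermediate $\theta$ uniform over $\ell$, which is the only point of substance.
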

\begin{proof}

Taylor's theorem about the origin states
\begin{multline*}
	f(\mathbf{x})
	= \sum_{j=0}^{n} \frac{1}{j!} \sum_{k=0}^{j} \binom{j}{k} x_1^{j-k} x_2^{k} \ (\partial_1^{j-k} \partial_2^k f)(\mathbf{0})
	\\
	+ \frac{1}{(n+1)!} \sum_{k=0}^{n+1} \binom{n+1}{k} x_1^{n+1-k} x_2^k \ \partial_1^{n+1-k} \partial_2^kf(\mathbf{y})
\end{multline*}
for some $\mathbf{y} = (y_1,y_2)$ on the line segment between $\mathbf{x} = (x_1,x_2)$ and the origin. On the other hand, for
$\mathbf{s} = (s_1,s_2) \in \mathbb{R}^2$, direct differentiation yields
\[
	\partial_1^{j-k} \partial_2^{k} \cos (\mathbf{s} \cdot \mathbf{x})
	= s_1^{j-k} s_2^{k} \cos(\frac{j \pi}{2} + \mathbf{s} \cdot \mathbf{x} ), 
	\qquad
	(0 \le k \le j),
\]
\[
	\partial_1^{j-k} \partial_2^{k} \sin (\mathbf{s} \cdot \mathbf{x})
	= s_1^{j-k} s_2^{k} \sin(\frac{j \pi}{2} + \mathbf{s} \cdot \mathbf{x}), 
	\qquad
	(0 \le k \le j).
\]
Using these identities, we obtain for $n \ge 0$
\[
	\psi(\mathbf{x}) = -\frac{1}{{|\Phi|}} \sum_{\ell = 1}^{{|\Phi|}}
	\Big\{
		\sum_{j=1}^{n} \frac{(\mathbf{s}_{\ell} \cdot \mathbf{x})^{j}}{j!} \cos \frac{j\pi}{2}
		+ \frac{(\mathbf{s}_{\ell} \cdot \mathbf{x})^{n+1}}{(n+1)!} \cos ( \frac{n+1}{2}\pi + \mathbf{s}_{\ell} \cdot \mathbf{p}) 
	\Big\},
\]
\[
	\partial_{k} \psi(\mathbf{x})
	= \frac{1}{{|\Phi|}} \sum_{\ell = 1}^{{|\Phi|}} s_{k,\ell}
	\Big\{
		\sum_{j=1}^{n} \frac{(\mathbf{s}_{\ell} \cdot \mathbf{x})^j}{j!} \sin \frac{j\pi}{2}
		+ \frac{(\mathbf{s}_{\ell} \cdot \mathbf{x})^{n+1}}{(n+1)!} \sin (\frac{n+1}{2}\pi + \mathbf{s}_{\ell} \cdot \mathbf{q}) 
	\Big\},
\]
\[
	\partial^2_{k} \psi(\mathbf{x})
	= \frac{1}{{|\Phi|}} \sum_{\ell = 1}^{{|\Phi|}} s_{k,\ell}^2
	\Big\{
		1+ \sum_{j=1}^{n} \frac{(\mathbf{s}_{\ell} \cdot \mathbf{x})^j}{j!} \cos \frac{j\pi}{2}
		+ \frac{(\mathbf{s}_{\ell} \cdot \mathbf{x})^{n+1}}{(n+1)!} \cos (\frac{n+1}{2}\pi + \mathbf{s}_{\ell} \cdot \mathbf{r}) 
	\Big\},
\]
for some $\mathbf{p},\mathbf{q},\mathbf{r} \in \mathbb{R}^2$ on the line segment between $\mathbf{x}$
and the origin which depend on $n$ and $\mathbf{x}$.
In these identities, taking $n = 2m +1$ for $\psi$ and $\partial_k^2 \psi$,
and $n = 2m$ for $\partial_k \psi$ ($m\ge0$) delivers the desired results.
\end{proof}

Note that, for $m \ge 1$,
\[
	\partial_k p_{m}(\mathbf{x})
	= \frac{1}{|\Phi|} \sum_{\ell = 1}^{|\Phi|} s_{k, \ell} \sum_{j=1}^{m} \frac{(-1)^{j+1}(\mathbf{s}_{\ell} \cdot \mathbf{x})^{2j-1}}{(2j-1)!},
\]
and
\[
	\partial_k^2 p_{m}(\mathbf{x})
	= \frac{1}{|\Phi|} \sum_{\ell = 1}^{|\Phi|}
	s_{k,\ell}^2 \Big\{ 1+ \sum_{j=1}^{m-1} \frac{(-1)^{j}(\mathbf{s}_{\ell} \cdot \mathbf{x})^{2j}}{(2j)!} \Big\}.
\]
The preceding Lemma therefore implies the following identities.

\begin{Cor} For all $m \ge 1$, all $\mathbf{x} \in \mathbb{R}^2$, and $k=1,2$, we have
\begin{equation} \label{eq:phi-pm}
	\psi(\mathbf{x})
	= p_m(\mathbf{x})
	+ \frac{1}{|\Phi|} \frac{(-1)^{m}}{(2m+2)!}
	\sum_{\ell = 1}^{|\Phi|} (\mathbf{s}_{\ell} \cdot \mathbf{x})^{2m+2} \cos (\mathbf{s}_{\ell} \cdot \mathbf{p}),
\end{equation}
\[
	\partial_k \psi(\mathbf{x})
	= \partial_k p_m(\mathbf{x})
	+ \frac{1}{|\Phi|} \frac{(-1)^m}{(2m+1)!}
	\sum_{\ell = 1}^{|\Phi|} s_{k,\ell} (\mathbf{s}_{\ell} \cdot \mathbf{x})^{2m+1} \cos (\mathbf{s}_{\ell} \cdot \mathbf{q}),
\]
\[
	\partial_k^2 \psi(\mathbf{x})
	= \partial_k^2 p_m(\mathbf{x})
	+ \frac{1}{|\Phi|} \frac{(-1)^{m}}{(2m)!}
	\sum_{\ell = 1}^{|\Phi|} s_{k,\ell}^2 (\mathbf{s}_{\ell} \cdot \mathbf{x})^{2m} \cos (\mathbf{s}_{\ell} \cdot \mathbf{r}) 
\]
for some $\mathbf{p},\mathbf{q},\mathbf{r} \in \mathbb{R}^2$ on the line segment between $\mathbf{x}$ and the origin which depend on $m$ and $\mathbf{x}$, and, where relevant, on $k$.
\end{Cor}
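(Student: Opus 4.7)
The proof is a direct rearrangement of Lemma~\ref{Lemma:psi-and-derivatives} once one matches the polynomial parts against the definitions of $p_m$, $\partial_k p_m$, and $\partial_k^2 p_m$ displayed immediately before the Corollary. So the plan is just to identify the right index $m$ at which to invoke the Lemma in each of the three cases, and then read off the remainder.

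For \eqref{eq:phi-pm}: I would apply Lemma~\ref{Lemma:psi-and-derivatives} as stated at the given value of $m$. Its expression for $\psi(\mathbf{x})$ splits into a polynomial part $\frac{1}{|\Phi|}\sum_{\ell}\sum_{j=1}^{m}\frac{(-1)^{j+1}(\mathbf{s}_{\ell}\cdot\mathbf{x})^{2j}}{(2j)!}$, which is precisely $p_m(\mathbf{x})$ by definition, plus a Lagrange-type remainder $\frac{(-1)^m}{(2m+2)!}(\mathbf{s}_\ell\cdot\mathbf{x})^{2m+2}\cos(\mathbf{s}_\ell\cdot\mathbf{p})$ summed over $\ell$ and divided by $|\Phi|$. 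Subtracting $p_m(\mathbf{x})$ from both sides yields the first identity. The $\partial_k\psi$ case is handled identically at the same index $m$: the Lemma's polynomial part matches the formula for $\partial_k p_m(\mathbf{x})$ displayed just above the Corollary, and the remainder is exactly what is claimed.

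The only slightly subtle point is the second-derivative case, which requires an index shift. The displayed expression for $\partial_k^2 p_m(\mathbf{x})$ has its inner sum running only up to $j=m-1$ (because differentiating twice lowers the degree by $2$), whereas the Lemma applied at index $m$ would give a polynomial part running up to $j=m$. To avoid a mismatch I would apply Lemma~\ref{Lemma:psi-and-derivatives} with $m$ replaced by $m-1$ (legitimate since the Lemma is stated for all $m\ge 0$ and here $m\ge 1$). Then the polynomial portion becomes $\frac{1}{|\Phi|}\sum_{\ell} s_{k,\ell}^2\bigl\{1+\sum_{j=1}^{m-1}\frac{(-1)^j(\mathbf{s}_\ell\cdot\mathbf{x})^{2j}}{(2j)!}\bigr\}=\partial_k^2 p_m(\mathbf{x})$, and the remainder term in the Lemma becomes $\frac{(-1)^m}{(2m)!}(\mathbf{s}_\ell\cdot\mathbf{x})^{2m}\cos(\mathbf{s}_\ell\cdot\mathbf{r})$, matching the Corollary exactly.

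There is no substantive obstacle: the Corollary is essentially a reindexing of the Lemma. The only thing worth flagging for the reader is the index shift by one in the $\partial_k^2\psi$ identity, which also explains why the prefactor becomes $\frac{(-1)^m}{(2m)!}$ rather than $\frac{(-1)^{m+1}}{(2m+2)!}$. The points $\mathbf{p},\mathbf{q},\mathbf{r}$ are simply the Lagrange intermediate points supplied by the Lemma and therefore inherit the stated dependence on $m$ and $\mathbf{x}$ (and on $k$ in the derivative cases).
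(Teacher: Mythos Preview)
Your proposal is correct and matches the paper's approach exactly: the paper simply states that the Corollary follows from the preceding Lemma together with the displayed formulas for $p_m$, $\partial_k p_m$, and $\partial_k^2 p_m$. Your explicit observation about the index shift $m \mapsto m-1$ in the $\partial_k^2 \psi$ case is the one nontrivial bookkeeping point, and you have handled it correctly.
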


These, in return, yield the following estimates.
\begin{Cor} \label{cor:phi-p}
For all $m \ge 1$, all $\mathbf{x} \in \mathbb{R}^2$, and $k=1,2$, we have
\[
	|\psi(\mathbf{x}) - p_m(\mathbf{x})|
	\le \frac{1}{|\Phi|} \frac{1}{(2m+2)!} \Vert \mathbf{x} \Vert^{2m+2}
	\sum_{\ell = 1}^{|\Phi|} \Vert \mathbf{s}_{\ell} \Vert^{2m+2}
	\le \frac{\overline{s}^{2m+2}}{(2m+2)!} \Vert \mathbf{x} \Vert^{2m+2},
\]
\[
	|\partial_k \psi(\mathbf{x}) - \partial_k p_m(\mathbf{x})|
	\le \frac{1}{|\Phi|} \frac{1}{(2m+1)!} \Vert \mathbf{x} \Vert^{2m+1}
	\sum_{\ell = 1}^{|\Phi|} |s_{k,\ell}| \Vert \mathbf{s}_{\ell} \Vert^{2m+1}
	\le \frac{\overline{s}^{2m+2}}{(2m+1)!} \Vert \mathbf{x} \Vert^{2m+1},
\]
\[
	|\partial_k^2 \psi(\mathbf{x}) - \partial_k^2 p_m(\mathbf{x})|
	\le \frac{1}{|\Phi|} \frac{1}{(2m)!} \Vert \mathbf{x} \Vert^{2m}
	\sum_{\ell = 1}^{|\Phi|} s_{k,\ell}^2 \Vert \mathbf{s}_{\ell} \Vert^{2m}
	\le \frac{\overline{s}^{2m+2}}{(2m)!} \Vert \mathbf{x} \Vert^{2m} .
\]
\end{Cor}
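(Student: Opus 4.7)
The plan is to deduce all three pairs of inequalities directly from the Taylor-with-remainder identities established in the immediately preceding Corollary, which express $\psi(\mathbf{x})-p_m(\mathbf{x})$, $\partial_k\psi(\mathbf{x})-\partial_k p_m(\mathbf{x})$, and $\partial_k^2\psi(\mathbf{x})-\partial_k^2 p_m(\mathbf{x})$ as a single remainder term involving, respectively, a factor $(\mathbf{s}_\ell\cdot\mathbf{x})^{2m+2}\cos(\mathbf{s}_\ell\cdot\mathbf{p})$, $s_{k,\ell}(\mathbf{s}_\ell\cdot\mathbf{x})^{2m+1}\cos(\mathbf{s}_\ell\cdot\mathbf{q})$, and $s_{k,\ell}^2(\mathbf{s}_\ell\cdot\mathbf{x})^{2m}\cos(\mathbf{s}_\ell\cdot\mathbf{r})$.

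First I apply the triangle inequality to move $|\cdot|$ inside the sum over $\ell$ in each of the three remainder formulas. Next I bound each cosine factor trivially by $1$ and each inner-product power by Cauchy--Schwarz, $|\mathbf{s}_\ell\cdot\mathbf{x}|^{j}\le\|\mathbf{s}_\ell\|^{j}\|\mathbf{x}\|^{j}$, which lets me pull the factor $\|\mathbf{x}\|^{2m+2}$ (respectively $\|\mathbf{x}\|^{2m+1}$ and $\|\mathbf{x}\|^{2m}$) outside the sum over $\ell$. What remains inside is $\sum_\ell\|\mathbf{s}_\ell\|^{2m+2}$ in the first case, $\sum_\ell|s_{k,\ell}|\|\mathbf{s}_\ell\|^{2m+1}$ in the second, and $\sum_\ell s_{k,\ell}^{2}\|\mathbf{s}_\ell\|^{2m}$ in the third; together with the prefactors $\tfrac{1}{|\Phi|(2m+2)!}$, $\tfrac{1}{|\Phi|(2m+1)!}$, $\tfrac{1}{|\Phi|(2m)!}$ inherited from the Corollary, this gives the first inequality of each line.

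For the second (cruder) bound in each line I use the definition $\overline{s}=\max_\ell\|\mathbf{s}_\ell\|$ from \eqref{eq:sbar}. Then $\|\mathbf{s}_\ell\|^{2m+2}\le\overline{s}^{2m+2}$ in the first sum; in the second, $|s_{k,\ell}|\le\|\mathbf{s}_\ell\|\le\overline{s}$ gives $|s_{k,\ell}|\|\mathbf{s}_\ell\|^{2m+1}\le\overline{s}^{2m+2}$; and in the third, $s_{k,\ell}^{2}\le\|\mathbf{s}_\ell\|^{2}\le\overline{s}^{2}$ gives $s_{k,\ell}^{2}\|\mathbf{s}_\ell\|^{2m}\le\overline{s}^{2m+2}$. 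In each case the summation over $\ell$ produces a factor $|\Phi|$ which cancels the $\tfrac{1}{|\Phi|}$ prefactor, and the three asserted upper bounds follow.

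No genuine obstacle arises; the estimates are a routine consequence of the Taylor-remainder identities combined with the triangle inequality, Cauchy--Schwarz, $|\cos|\le 1$, and the definition of $\overline{s}$. The only thing to be careful about is the book-keeping of the three different powers of $\|\mathbf{x}\|$ and of the coefficient $s_{k,\ell}$ in the derivative estimates, which must be handled consistently line by line.
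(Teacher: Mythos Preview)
Your proof is correct and follows precisely the paper's own approach: the paper's proof is the single sentence ``Follows from the preceding Corollary in conjunction with the triangle and Cauchy--Schwarz inequalities,'' and you have simply spelled out those steps in detail.
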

\begin{proof}
Follows from the preceding Corollary in conjunction with the triangle and Cauchy-Schwarz inequalities.
\end{proof}

Now we return to the problem of estimating $p_m,\partial_k p_m, \partial_k^2 p_m$. As above, the following estimates
are immediate from the triangle and Cauchy-Schwarz inequalities.

\begin{Lem} \label{lemma:p}
Given $C_4>0$, there holds
\[
	|p_{m}(\mathbf{x})|
	\le \frac{1}{|\Phi|} \Vert \mathbf{x} \Vert^2 \sum_{\ell = 1}^{|\Phi|} \sum_{j=1}^{m} \frac{\Vert \mathbf{s}_{\ell} \Vert^{2j} C_4^{2j-2}}{(2j)!}
	\le \frac{\cosh(\overline{s} \, C_4)-1}{C_4^2} \Vert \mathbf{x} \Vert^2,
\]
\[
	|\partial_k p_{m}(\mathbf{x})|
	\le \frac{1}{|\Phi|} \Vert \mathbf{x} \Vert
	\sum_{\ell = 1}^{|\Phi|} |s_{k,\ell}| \sum_{j=1}^{m} \frac{\Vert \mathbf{s}_{\ell} \Vert^{2j-1} C_4^{2j-2}}{(2j-1)!}
	\le \frac{\overline{s} \, \sinh(\overline{s} \, C_4)}{C_4} \Vert \mathbf{x} \Vert,
\]
\[
	|\partial_k^2 p_{m}(\mathbf{x})|
	\le \frac{1}{|\Phi|} \sum_{\ell = 1}^{|\Phi|} s_{k,\ell}^2 \Big\{ 1+ \sum_{j=1}^{m-1} \frac{\Vert \mathbf{s}_{\ell} \Vert^{2j} C_4^{2j}}{(2j)!} \Big\}
	\le \overline{s}^2 \cosh(\overline{s} \, C_4),
\]
for all $m \ge 1$, all $\mathbf{x} \in \mathbb{R}^2$ with $\Vert \mathbf{x} \Vert \le C_4$, and $k = 1,2$.
\end{Lem}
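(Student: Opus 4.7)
The plan is to apply the triangle inequality termwise to each of the three finite sums defining $p_m$, $\partial_k p_m$, and $\partial_k^2 p_m$, and then bound every occurrence of $|\mathbf{s}_\ell \cdot \mathbf{x}|^{r}$ by $\Vert \mathbf{s}_\ell \Vert^{r} \Vert \mathbf{x} \Vert^{r}$ via Cauchy--Schwarz. This reduces everything to estimating sums of the form $\sum_\ell \sum_j \Vert \mathbf{s}_\ell \Vert^{2j} \Vert \mathbf{x} \Vert^{2j}/(2j)!$ and the analogous variants with $2j-1$ in the exponent.

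For the \emph{first} inequality in each of the three displays, I would factor out the anticipated power of $\Vert \mathbf{x} \Vert$. In the $p_m$ case I write $\Vert \mathbf{x} \Vert^{2j} = \Vert \mathbf{x} \Vert^{2}\,\Vert \mathbf{x} \Vert^{2j-2}$ and invoke the hypothesis $\Vert \mathbf{x} \Vert \le C_4$ to replace $\Vert \mathbf{x} \Vert^{2j-2}$ by $C_4^{2j-2}$; in the $\partial_k p_m$ case I split $\Vert \mathbf{x} \Vert^{2j-1} = \Vert \mathbf{x} \Vert \cdot \Vert \mathbf{x} \Vert^{2j-2}$ and do the same. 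For $\partial_k^2 p_m$ there is no factor of $\Vert \mathbf{x} \Vert$ to retain, so I bound $\Vert \mathbf{x} \Vert^{2j} \le C_4^{2j}$ outright, which explains why the stated bound has $C_4^{2j}$ rather than $C_4^{2j-2}$.

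For the \emph{second} inequality in each display, I would majorize $\Vert \mathbf{s}_\ell \Vert \le \overline{s}$ (and $|s_{k,\ell}| \le \Vert \mathbf{s}_\ell \Vert \le \overline{s}$ where relevant) uniformly in $\ell$. This kills the $\ell$-dependence and allows $\frac{1}{|\Phi|}\sum_{\ell=1}^{|\Phi|}$ to collapse. What remains is a single sum in $j$ of the shape $\frac{1}{C_4^{2}}\sum_{j=1}^{m} (\overline{s}C_4)^{2j}/(2j)!$ for $p_m$, $\frac{\overline{s}}{C_4}\sum_{j=1}^{m}(\overline{s}C_4)^{2j-1}/(2j-1)!$ for $\partial_k p_m$, and $\overline{s}^{2}\bigl\{1+\sum_{j=1}^{m-1}(\overline{s}C_4)^{2j}/(2j)!\bigr\}$ for $\partial_k^2 p_m$. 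Each of these partial sums has nonnegative terms, so I extend the summation to infinity and identify the resulting series as $\cosh(\overline{s}C_4)-1$, $\sinh(\overline{s}C_4)$, and $\cosh(\overline{s}C_4)$ respectively.

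Since each step is a direct application of the triangle inequality, Cauchy--Schwarz, and the monotonicity of truncated power series with nonnegative terms, I do not anticipate any serious obstacle. The only care needed is bookkeeping the different powers of $\overline{s}$ and $C_4$ across the three cases so that the exponents in the stated bounds come out exactly right.
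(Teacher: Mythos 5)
Your proposal is correct and follows exactly the route the paper takes: the paper simply notes that these estimates are ``immediate from the triangle and Cauchy--Schwarz inequalities,'' and your termwise application of the triangle inequality, the bound $|\mathbf{s}_\ell\cdot\mathbf{x}|\le \Vert\mathbf{s}_\ell\Vert\,\Vert\mathbf{x}\Vert$, the substitution $\Vert\mathbf{x}\Vert\le C_4$ for the excess powers, and the extension of the truncated series to $\cosh$ and $\sinh$ is precisely that argument carried out in full. No gaps.
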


\begin{Lem} \label{lemma:p-upper-lower}
We have:
\begin{enumerate}
\item For all $\mathbf{x} \in \mathbb{R}^2$, $p_1(\mathbf{x}) \ge \dfrac{1}{2|\Phi|} \Vert \mathbf{x}\Vert^2$.
\item If $0\le \beta \le 1$ and $\Vert \mathbf{x} \Vert \le \dfrac{\sqrt{12(1-\beta)}}{\overline{s}}$,
then $p_m(\mathbf{x}) \ge \dfrac{\beta}{2|\Phi|} \Vert \mathbf{x} \Vert^2$ holds for all $m \ge 1$.
\end{enumerate}
\end{Lem}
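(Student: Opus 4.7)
The plan is to handle each part separately, reducing the two-dimensional statement in (2) to a single-variable inequality about the partial Taylor sums of $1-\cos$. Part (1) is essentially by inspection: since
\[
p_1(\mathbf{x})=\frac{1}{2|\Phi|}\sum_{\ell=1}^{|\Phi|}(\mathbf{s}_{\ell}\cdot\mathbf{x})^2,
\]
the terms $\ell=1,2$ alone contribute $(\mathbf{s}_1\cdot\mathbf{x})^2+(\mathbf{s}_2\cdot\mathbf{x})^2=\Vert\mathbf{x}\Vert^2$ by \eqref{eq:s1-s2}, while the remaining summands are nonnegative. In particular, the $m=1$ case of (2) is then a consequence of (1) since $\beta\le 1$, so the real content of (2) lies in $m\ge 2$.

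For (2), I would set $u_{\ell}:=\mathbf{s}_{\ell}\cdot\mathbf{x}$ and
\[
T_m(u):=\sum_{j=1}^{m}\frac{(-1)^{j+1}u^{2j}}{(2j)!},
\]
so that $p_m(\mathbf{x})=\frac{1}{|\Phi|}\sum_{\ell}T_m(u_{\ell})$. By Cauchy--Schwarz and the hypothesis, $u_{\ell}^2\le\overline{s}^2\Vert\mathbf{x}\Vert^2\le 12(1-\beta)\le 12$. The target is the one-variable estimate
\[
T_m(u)\ \ge\ \tfrac{\beta}{2}\,u^2\qquad(u^2\le 12(1-\beta),\ m\ge 1),
\]
from which (2) follows by applying it to $\ell=1,2$ (which together recover $u_1^2+u_2^2=\Vert\mathbf{x}\Vert^2$) and discarding the summands for $\ell\ge 3$ using the nonnegativity $T_m(u_{\ell})\ge 0$ that the same estimate supplies.

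To prove this one-variable estimate I plan a pairing argument. The generic pair
\[
\frac{u^{2k}}{(2k)!}-\frac{u^{2k+2}}{(2k+2)!}=\frac{u^{2k}}{(2k)!}\Big(1-\frac{u^2}{(2k+1)(2k+2)}\Big)
\]
is nonnegative whenever $u^2\le(2k+1)(2k+2)$, and the minimum of $(2k+1)(2k+2)$ over $k\ge 1$ is $12$, attained at $k=1$. Hence every pair is nonnegative on $\{u^2\le 12\}$. For even $m$, $T_m$ is a sum of complete pairs, bounded below by the first one, $\tfrac{u^2}{2}(1-\tfrac{u^2}{12})$, which dominates $\tfrac{\beta}{2}u^2$ precisely when $u^2\le 12(1-\beta)$. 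For odd $m\ge 3$ the extra term $u^{2m}/(2m)!$ is nonnegative, so $T_m\ge T_{m-1}$ and the even case applies; and $m=1$ is immediate from $T_1(u)=u^2/2$.

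The only subtle point I foresee is arranging the pairing so that both parities of $m$ are handled uniformly and so that the nonnegativity of $T_m(u_{\ell})$ for $\ell\ge 3$ drops out as a by-product. The threshold $12$ appearing in the hypothesis on $\Vert\mathbf{x}\Vert$ is exactly $(2\cdot 1+1)(2\cdot 1+2)$, the constant needed to keep the very first pair nonnegative, which is why it appears naturally in the statement.
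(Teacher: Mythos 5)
Your proposal is correct and follows essentially the same route as the paper: part (1) by dropping the nonnegative terms with $\ell\ge 3$ and using \eqref{eq:s1-s2}, and part (2) by pairing consecutive terms of the alternating Taylor sum so that each pair $\frac{u^{4k-2}}{(4k-2)!}-\frac{u^{4k}}{(4k)!}$ is controlled via the worst-case factor $1-\frac{u^2}{12}$ (whence the threshold $12(1-\beta)/\overline{s}^2$), treating odd $m\ge 3$ by discarding the final nonnegative term, and finally restricting to $\ell=1,2$. The only cosmetic difference is the order of operations (you discard all pairs but the first and then extract $\beta$; the paper extracts $\beta$ from every pair and then discards), which changes nothing.
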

\begin{proof}
(1) For any $\mathbf{x} \in \mathbb{R}^2$, by \eqref{eq:s1-s2} we have
\[
	p_1(\mathbf{x})
	= \frac{1}{|\Phi|} \sum_{\ell = 1}^{|\Phi|}  \frac{(\mathbf{s}_{\ell} \cdot \mathbf{x})^{2}}{2!}
	\ge \frac{1}{|\Phi|} \sum_{\ell = 1}^2  \frac{(\mathbf{s}_{\ell} \cdot \mathbf{x})^{2}}{2!}
	= \frac{1}{2|\Phi|} \Vert \mathbf{x}\Vert^2.
\]
(2) For $m=1$, this follows from (1). If $m\ge 2$ is even, say $m=2n$, then
\begin{align*}
	p_m(\mathbf{x})
	& = \frac{1}{|\Phi|} \sum_{\ell =1}^{|\Phi|}
	\sum_{k=1}^{n}
	\Big( \frac{(\mathbf{s}_{\ell} \cdot \mathbf{x})^{4k-2}}{(4k-2)!}-\frac{(\mathbf{s}_{\ell} \cdot \mathbf{x})^{4k}}{(4k)!} \Big)
	= \frac{1}{|\Phi|} \sum_{\ell =1}^{|\Phi|} \sum_{k=1}^{n} \frac{(\mathbf{s}_{\ell} \cdot \mathbf{x})^{4k-2}}{(4k-2)!}
	\big( 1-\frac{(\mathbf{s}_{\ell} \cdot \mathbf{x})^{2}}{4k(4k-1)} \big)
	\\
	&
	\ge \frac{1}{|\Phi|} \sum_{\ell =1}^{|\Phi|} \sum_{k=1}^{n} \frac{(\mathbf{s}_{\ell} \cdot \mathbf{x})^{4k-2}}{(4k-2)!}
	\big( 1-\frac{ \Vert \mathbf{s}_{\ell} \Vert^2 \Vert \mathbf{x} \Vert^2}{12} \big)
	\ge \frac{1}{|\Phi|} \sum_{\ell =1}^{|\Phi|} \sum_{k=1}^{n} \frac{(\mathbf{s}_{\ell} \cdot \mathbf{x})^{4k-2}}{(4k-2)!} \big( 1-\frac{\overline{s}^2 \Vert \mathbf{x} \Vert^2}{12} \big)
	\\
	& \ge \frac{\beta}{|\Phi|} \sum_{\ell =1}^{|\Phi|} \sum_{k=1}^{n} \frac{(\mathbf{s}_{\ell} \cdot \mathbf{x})^{4k-2}}{(4k-2)!}
	\ge \frac{\beta}{|\Phi|} \sum_{\ell =1}^{|\Phi|} \frac{(\mathbf{s}_{\ell} \cdot \mathbf{x})^{2}}{2!}
	\ge \frac{\beta}{|\Phi|} \sum_{\ell =1}^{2} \frac{(\mathbf{s}_{\ell} \cdot \mathbf{x})^{2}}{2!}
	= \frac{\beta}{2|\Phi|} \Vert \mathbf{x} \Vert^2.
\end{align*}
If $m \ge 3$ is odd, then $m-1$ is even, and therefore the previous step yields
\[
	p_m(\mathbf{x})
	= p_{m-1}(\mathbf{x}) + \frac{1}{|\Phi|} \sum_{\ell = 1}^{|\Phi|} \frac{(\mathbf{s}_{\ell} \cdot \mathbf{x})^{2m}}{(2m)!}
	\ge p_{m-1}(\mathbf{x})
	\ge \frac{\beta}{2|\Phi|} \Vert \mathbf{x} \Vert^2.
\]
This finishes the proof.
\end{proof}

\begin{Cor} \label{cor:fm-2} We have:
\begin{enumerate}
\item Given $C_4 >0$, there exists a constant $C'_4>0$ that depends only on $C_4$ such that $|\partial_{k}^{2} f_1(\mathbf{x})| \le \dfrac{C'_4}{\norm{\mathbf{x}}^4}$ holds for $0 < \Vert \mathbf{x} \Vert \le C_4$.
\item Given a fixed $\beta \in (0,1)$, there exists a constant $C_5(\beta) >0$ such that $|\partial_{k}^{2} f_m(\mathbf{x})| \le \dfrac{C_5(\beta)}{\norm{\mathbf{x}}^4}$ holds for all $m \ge 1$ for $0< \Vert \mathbf{x} \Vert \le \dfrac{\sqrt{12(1-\beta)}}{\overline{s}}$.
\end{enumerate}
\end{Cor}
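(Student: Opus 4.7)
The plan is to mimic the proof of Corollary~\ref{cor:est-f2} with $\psi$ replaced by $p_m$. From the identity $f_m = 1/p_m$, the quotient rule gives
\[
\partial_{k}^{2} f_m(\mathbf{x}) = \frac{2(\partial_{k} p_m(\mathbf{x}))^2 - p_m(\mathbf{x})\,\partial_{k}^{2} p_m(\mathbf{x})}{p_m(\mathbf{x})^{3}},
\]
so by the triangle inequality
\[
|\partial_{k}^{2} f_m(\mathbf{x})| \le \frac{2|\partial_{k} p_m(\mathbf{x})|^{2} + |p_m(\mathbf{x})|\,|\partial_{k}^{2} p_m(\mathbf{x})|}{p_m(\mathbf{x})^{3}}.
\]
The strategy is then to bound the numerator from above and the denominator from below using the already established Lemmas~\ref{lemma:p} and \ref{lemma:p-upper-lower}, ensuring that in each case the chosen upper bound on the numerator is proportional to $\|\mathbf{x}\|^{2}$ and the lower bound on $p_m$ is proportional to $\|\mathbf{x}\|^{2}$, yielding an overall bound of the form $C/\|\mathbf{x}\|^{4}$.

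For part (1), I apply Lemma~\ref{lemma:p} with the given $C_4$ and $m=1$ to obtain $|\partial_{k} p_1(\mathbf{x})| \le A_1(C_4)\|\mathbf{x}\|$, $|p_1(\mathbf{x})| \le A_2(C_4)\|\mathbf{x}\|^{2}$, $|\partial_{k}^{2} p_1(\mathbf{x})| \le A_3(C_4)$, valid on $\|\mathbf{x}\| \le C_4$. Combined with Lemma~\ref{lemma:p-upper-lower}(1), which yields $p_1(\mathbf{x}) \ge \frac{1}{2|\Phi|}\|\mathbf{x}\|^{2}$ everywhere, the displayed quotient is bounded by a constant multiple of $\|\mathbf{x}\|^{2}/\|\mathbf{x}\|^{6} = 1/\|\mathbf{x}\|^{4}$, with the constant depending only on $C_4$.

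For part (2), I set $C_4 := \sqrt{12(1-\beta)}/\overline{s}$, which is precisely the range in which Lemma~\ref{lemma:p-upper-lower}(2) gives $p_m(\mathbf{x}) \ge \frac{\beta}{2|\Phi|}\|\mathbf{x}\|^{2}$, uniformly in $m$. The crucial observation, which is the heart of the argument, is that the upper bounds in Lemma~\ref{lemma:p} at this $C_4$, namely $\frac{\cosh(\overline{s}C_4)-1}{C_4^{2}}\|\mathbf{x}\|^{2}$, $\frac{\overline{s}\sinh(\overline{s}C_4)}{C_4}\|\mathbf{x}\|$, and $\overline{s}^{2}\cosh(\overline{s}C_4)$, are all uniform in $m$. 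Inserting these into the displayed quotient yields $|\partial_{k}^{2} f_m(\mathbf{x})| \le C_5(\beta)/\|\mathbf{x}\|^{4}$ with a constant depending only on $\beta$ (through $\overline{s}$ and $|\Phi|$), as required.

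There is no serious obstacle here; the only point that requires attention is the $m$-independence of the estimates in part (2), which is exactly what the $\cosh/\sinh$-type bounds of Lemma~\ref{lemma:p} were designed to deliver, so the proof amounts to assembling the pieces correctly and tracking the dependence of constants on $\beta$ and $C_4$ rather than on $m$.
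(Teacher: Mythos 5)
Your proposal is correct and follows essentially the same route as the paper: the quotient-rule identity for $\partial_k^2 f_m$, the triangle inequality, the upper bounds of Lemma~\ref{lemma:p} (which are uniform in $m$), and the lower bounds of Lemma~\ref{lemma:p-upper-lower} with $\delta=1$ for part (1) and $\delta=\beta$ at $C_4=\sqrt{12(1-\beta)}/\overline{s}$ for part (2). The paper merely packages the two cases into a single computation before specializing, so there is no substantive difference.
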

\begin{proof}
Since $\partial_{k}^{2} f_m(\mathbf{x}) = \dfrac{2 (\partial_{k} p_m(\mathbf{x}))^2-p_m(\mathbf{x}) \, \partial_{k}^{2} p_m(\mathbf{x})}{(p_m(\mathbf{x}))^{3}}$, we have
\[
	|\partial_{k}^{2} f_m(\mathbf{x})|
	\le \frac{2 |\partial_{k} p_m(\mathbf{x})|^2 + |p_m(\mathbf{x})| \, |\partial_{k}^2 p_m(\mathbf{x})|}{|p_m(\mathbf{x})|^{3}},
	\qquad (m \ge 1).
\]
%
Lemmas \ref{lemma:p} and \ref{lemma:p-upper-lower} imply
\begin{align*}
	|\partial_{k}^{2} f_m(\mathbf{x})|
	& \le \dfrac{2 \, \dfrac{\overline{s}^2}{C_4^2} \, \sinh^2 (\overline{s} \, C_4) \Vert \mathbf{x} \Vert^2
	+ \dfrac{\cosh (\overline{s} \, C_4)-1}{C_4^2} \, \Vert \mathbf{x} \Vert^2 \ \overline{s}^2 \, \cosh( \overline{s} \, C_4)}{\dfrac{\delta^3}{(2 \, |\Phi|)^3} \, \Vert \mathbf{x} \Vert^6}
	\\
	& = \dfrac{8 \, |\Phi|^3 \, \overline{s}^2}{\delta^3 \, C_4^2} 
	\left[
		3 \cosh^2 (\overline{s} \, C_4) - \cosh(\overline{s} \, C_4) -2
	\right] \,
	\dfrac{1}{\Vert \mathbf{x} \Vert^4},
\end{align*}
where
\[
	\delta = 
	\left\{
		\begin{array}{ll}
			1, & \text{if } m =1,
			\\
		 	\beta, & \text{if } m > 1,
		\end{array}
	\right.
\]
for $0 < \Vert \mathbf{x} \Vert \le C_4$ with $C_4 = \dfrac{\sqrt{12(1-\beta)}}{\overline{s}}$ for
$\beta \in (0,1)$ if $m \ge 2$. This establishes the first part. When $m \ge 2$,
\begin{align*}
	|\partial_{k}^{2} f_m(\mathbf{x})|
	& = \dfrac{8 \, |\Phi|^3 \, \overline{s}^4}{\delta^3} \,
	\dfrac{3 \cosh^2 (\sqrt{12(1-\beta)}) - \cosh(\sqrt{12(1-\beta)}) -2}{12(1-\beta)} \,
	\dfrac{1}{\Vert \mathbf{x} \Vert^4}
	\\
	& < \dfrac{8 \, |\Phi|^3 \, \overline{s}^4}{\delta^3} \,
	\dfrac{3 \cosh^2 (\sqrt{12}) - \cosh(\sqrt{12}) -2}{12} \,
	\dfrac{1}{\Vert \mathbf{x} \Vert^4}.
\end{align*}
This gives the second part.
\end{proof}

\begin{Lem} \label{lemma:der-est}
Given a fixed $\beta \in (0,1)$, there exists a constant $C_6(\beta)>0$ such that,
for all $m \ge 1$, and all $\mathbf{x} \in \mathbb{R}^2$ with
$0<\Vert \mathbf{x} \Vert \le \frac{\sqrt{12(1-\beta)}}{\overline{s}}$, we have
\[
	|\partial_k^2 h_m(\mathbf{x})| \le C_6(\beta) \Vert \mathbf{x} \Vert^{2m-4}
\]
for $k=1,2$ where $h_m(\mathbf{x}) = f(\mathbf{x})-f_m(\mathbf{x})$.
\end{Lem}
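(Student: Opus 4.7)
The plan is to rewrite $h_m = f - f_m$ as a single quotient
\[
	h_m(\mathbf{x}) = \frac{1}{\psi(\mathbf{x})} - \frac{1}{p_m(\mathbf{x})} = \frac{p_m(\mathbf{x}) - \psi(\mathbf{x})}{\psi(\mathbf{x}) \, p_m(\mathbf{x})},
\]
so that the smallness of the numerator $u := p_m - \psi$ (of order $\Vert \mathbf{x} \Vert^{2m+2}$ near the origin by Corollary \ref{cor:phi-p}) is exposed before any differentiation. Setting $v := \psi \, p_m$, we have $h_m = u/v$ and the quotient rule gives
\[
	\partial_k^2 h_m = \frac{\partial_k^2 u}{v} - 2\,\frac{\partial_k u \, \partial_k v}{v^2} - \frac{u \, \partial_k^2 v}{v^2} + 2\,\frac{u\,(\partial_k v)^2}{v^3}.
\]

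Next I would estimate the six ingredients $u,\partial_k u,\partial_k^2 u,v,\partial_k v,\partial_k^2 v$ on the region $0<\Vert \mathbf{x} \Vert \le \sqrt{12(1-\beta)}/\overline{s}$. Corollary \ref{cor:phi-p} supplies constants (depending only on $\overline{s}$) with
\[
	|u| \le C\, \Vert \mathbf{x} \Vert^{2m+2}, \qquad |\partial_k u| \le C\, \Vert \mathbf{x} \Vert^{2m+1}, \qquad |\partial_k^2 u| \le C\, \Vert \mathbf{x} \Vert^{2m}.
\]
For the denominator, Lemma \ref{lemma:phi-lower} gives $\psi(\mathbf{x}) \ge C_1\, \Vert \mathbf{x} \Vert^2$ on this disk (which is valid since $\sqrt{12(1-\beta)}/\overline{s} \le \sqrt{12} < 2\pi$), and Lemma \ref{lemma:p-upper-lower}(2) gives $p_m(\mathbf{x}) \ge \frac{\beta}{2|\Phi|}\Vert \mathbf{x} \Vert^2$; multiplying yields $v \ge C\,\Vert \mathbf{x} \Vert^4$. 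Finally, Lemmas \ref{lemma:phi} and \ref{lemma:p} combined with the product rule applied to $v = \psi \, p_m$ give $|\partial_k v| \le C\,\Vert \mathbf{x} \Vert^3$ and $|\partial_k^2 v| \le C\,\Vert \mathbf{x} \Vert^2$.

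Plugging these bounds in term-by-term, each of the four summands in the expansion of $\partial_k^2 h_m$ is majorised by a constant multiple of $\Vert \mathbf{x} \Vert^{2m-4}$:
\[
	\Bigl|\frac{\partial_k^2 u}{v}\Bigr| + \Bigl|\frac{\partial_k u \,\partial_k v}{v^2}\Bigr| + \Bigl|\frac{u\,\partial_k^2 v}{v^2}\Bigr| + \Bigl|\frac{u\,(\partial_k v)^2}{v^3}\Bigr| \le C_6(\beta)\, \Vert \mathbf{x} \Vert^{2m-4},
\]
which is exactly the stated estimate.

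The main conceptual point, rather than an obstacle, is recognising that $h_m$ must be handled as a single quotient and \emph{not} as $\partial_k^2 f - \partial_k^2 f_m$: the individual second derivatives are each only $\mathcal{O}(\Vert \mathbf{x} \Vert^{-4})$ by Corollaries \ref{cor:est-f2} and \ref{cor:fm-2}, and the improvement of order $\Vert \mathbf{x} \Vert^{2m}$ appears only after the algebraic cancellation built into $p_m - \psi$. The remaining technicalities are routine verifications that the disk $\Vert \mathbf{x} \Vert \le \sqrt{12(1-\beta)}/\overline{s}$ lies inside the domain of validity of Lemma \ref{lemma:phi-lower}, which is immediate from $\overline{s} \ge 1$.
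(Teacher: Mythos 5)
Your proof is correct, and it rests on exactly the same ingredients as the paper's: Corollary~\ref{cor:phi-p} for the $\mathcal{O}(\Vert\mathbf{x}\Vert^{2m+2-j})$ smallness of $\partial_k^j(\psi-p_m)$, Lemmas~\ref{lemma:phi} and~\ref{lemma:p} for the upper bounds, and Lemmas~\ref{lemma:phi-lower} and~\ref{lemma:p-upper-lower} for the quadratic lower bounds on $\psi$ and $p_m$ (the latter being where the $\beta$-dependence and the $m$-uniformity enter). The only genuine difference is the algebraic bookkeeping: you combine $h_m$ into the single quotient $(p_m-\psi)/(\psi p_m)$ and apply the quotient rule once, so that each of your four terms carries a factor $\partial_k^j u$ with $u=p_m-\psi$; the paper instead starts from $\partial_k^2 f-\partial_k^2 f_m$ written out via the explicit formulas and regroups it telescopically into four terms, each containing one of the difference factors $p_m-\psi$, $\partial_k\psi-\partial_k p_m$, $\partial_k^2 p_m-\partial_k^2\psi$. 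The two decompositions are equivalent rearrangements and yield the same power count $\Vert\mathbf{x}\Vert^{2m-4}$ term by term. One small correction to your closing remark: it is not true that the difference \emph{must} be handled as a single quotient --- the paper does work with $\partial_k^2 f-\partial_k^2 f_m$ directly; the point is only that one cannot bound the two second derivatives \emph{separately} (each is merely $\mathcal{O}(\Vert\mathbf{x}\Vert^{-4})$) and must regroup so that every term contains a factor in which the cancellation of Corollary~\ref{cor:phi-p} is visible. You should also make explicit that the prefactors $\overline{s}^{2m+2}/(2m+2)!$ etc.\ appearing in Corollary~\ref{cor:phi-p} are bounded uniformly in $m$ (e.g.\ by $\cosh\overline{s}$), since the claimed constant $C_6(\beta)$ must not depend on $m$; this is implicit in your phrase ``depending only on $\overline{s}$'' but deserves a sentence.
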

\begin{proof}
We write $\partial_k^2 h_m$ as
\begin{multline*}
	\partial_k^2 h_m
	= \frac{2(\partial_k \psi)^2 (p_m-\psi) (p_m^2+p_m\psi+\psi^2)}{\psi^3p_m^3}
	+ \frac{2(\partial_k \psi - \partial_k p_m)(\partial_k \psi + \partial_k p_m)}{p_m^3}
	\\
	+ \frac{\partial_k^2 \psi (\psi-p_m) (\psi+p_m)}{\psi^2p_m^2}
	+ \frac{\partial_k^2 p_m - \partial_k^2 \psi}{p_m^2},
\end{multline*}
and we estimate
\begin{multline*}
	|\partial_k^2 h_m|
	\le \frac{2(\partial_k \psi)^2 |p_m-\psi| (p_m^2+p_m\psi+\psi^2)}{\psi^3p_m^3}
	+ \frac{2|\partial_k \psi - \partial_k p_m|(|\partial_k \psi| + |\partial_k p_m|)}{p_m^3}
	\\
	+ \frac{|\partial_k^2 \psi| |\psi-p_m| (\psi+p_m)}{\psi^2p_m^2}
	+ \frac{|\partial_k^2 p_m - \partial_k^2 \psi|}{p_m^2}.
\end{multline*}
Thus the result follows from Lemmas \ref{lemma:phi}, \ref{lemma:phi-lower}, \ref{lemma:p}, \ref{lemma:p-upper-lower} and Corollary \ref{cor:phi-p}.
\end{proof}

In comparing the sums and integrals mentioned in \S\ref{sec:mainresults}, we shall also need the following general estimates.
\begin{Lem} \label{lemma:radialsum}
Given $\alpha \in \mathbb{R}$, we have, as $n \to \infty$,
\[
	\Delta_n^2 \sum_{\mathbf{t}_{j,k} \in D_n} \max_{\mathbf{x} \in X_{j,k}} \frac{1}{\Vert \mathbf{x} \Vert^\alpha}
	= \left\{
		\begin{array}{cl}
			\mathcal{O}(n^{\alpha-2}), & \alpha > 2, \\
			\mathcal{O}(\log n), & \alpha = 2, \\
			\mathcal{O}(1), & \alpha < 2.
		\end{array}
	\right.
\]
\end{Lem}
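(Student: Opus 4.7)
\emph{Plan of proof.} The strategy is to split $D_n$ into an inner ring of $\mathcal{O}(1)$ rectangles abutting the excluded square around the origin, handled by a direct worst-case estimate, and an outer region $R_n$ on which $\max_{\mathbf{x} \in X_{j,k}} \|\mathbf{x}\|^{-\alpha}$ is comparable (by a constant independent of $n$) to $\|\mathbf{t}_{j,k}\|^{-\alpha}$. On $R_n$ I will majorize the sum by an integral over an annulus, which is then evaluated in polar coordinates to produce the three regimes.

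The case $\alpha \le 0$ is immediate: $\|\mathbf{x}\|^{-\alpha} = \|\mathbf{x}\|^{|\alpha|}$ is uniformly bounded on the fixed bounded set $D_n \subset [-2\pi,2\pi]^2$, so the sum is at most $\Delta_n^2 \cdot \mathcal{O}(n^2) \cdot \mathcal{O}(1) = \mathcal{O}(1)$, which lies within the claim in all three cases.

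Suppose now $\alpha > 0$. Set $R_n := \{\mathbf{t}_{j,k} \in D_n : \|\mathbf{t}_{j,k}\| \ge \sqrt{2}\,\Delta_n\}$; since $\|\mathbf{t}_{j,k}\| = \Delta_n \sqrt{j^2+k^2}$, its complement in $D_n$ consists of the four rectangles with $(j,k) \in \{(\pm 1, 0), (0, \pm 1)\}$. By \eqref{eq:D_n}, any $\mathbf{x} \in X_{j,k} \subset D_n$ satisfies $\|\mathbf{x}\| \ge \pi/n$, so these exceptional rectangles contribute at most $\Delta_n^2 \cdot 4 \cdot (n/\pi)^\alpha = \mathcal{O}(n^{\alpha - 2})$, which fits inside each of the three target bounds. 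For $\mathbf{t}_{j,k} \in R_n$ the half-diagonal bound $\|\mathbf{x} - \mathbf{t}_{j,k}\| \le \Delta_n/\sqrt{2} \le \|\mathbf{t}_{j,k}\|/2$ on $X_{j,k}$ gives $\tfrac{1}{2}\|\mathbf{t}_{j,k}\| \le \|\mathbf{x}\| \le \tfrac{3}{2}\|\mathbf{t}_{j,k}\|$, whence $\Delta_n^2 \max_{X_{j,k}} \|\mathbf{x}\|^{-\alpha} \le 3^\alpha \iint_{X_{j,k}} \|\mathbf{x}\|^{-\alpha}\, d\mathbf{x}$. Summing over $R_n$ and enlarging the domain to an annulus,
\[
\Delta_n^2 \!\!\sum_{\mathbf{t}_{j,k} \in R_n} \max_{X_{j,k}} \|\mathbf{x}\|^{-\alpha} \;\le\; 3^\alpha \!\!\!\iint_{\Delta_n/\sqrt{2} \,\le\, \|\mathbf{x}\| \,\le\, 2\pi\sqrt{2}}\!\!\! \|\mathbf{x}\|^{-\alpha}\, d\mathbf{x} \;=\; 2\pi \cdot 3^\alpha \int_{\Delta_n/\sqrt{2}}^{2\pi\sqrt{2}} r^{1-\alpha}\, dr,
\]
which elementary evaluation shows to be $\mathcal{O}(n^{\alpha-2})$, $\mathcal{O}(\log n)$, or $\mathcal{O}(1)$ according as $\alpha > 2$, $\alpha = 2$, or $0 < \alpha < 2$.

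The only obstacle is conceptual: on the inner ring the pointwise maximum of $\|\mathbf{x}\|^{-\alpha}$ on $X_{j,k}$ is not comparable to its area average, so the clean sum-to-integral comparison breaks down precisely where the integrand is largest. This is overcome by the observation that the inner ring contains only a bounded number of rectangles independent of $n$, so a crude worst-case count already fits within the asymptotic targets in all three regimes.
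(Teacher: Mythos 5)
Your proof is correct, and it takes a genuinely different route from the paper's. The paper exploits the four-fold symmetry to reduce to the quadrant $(0,\pi]^2$, treats the lattice points on the coordinate axes as a separate one-dimensional sum, observes that the maximum of $\Vert \mathbf{x}\Vert^{-\alpha}$ over $X_{j,k}$ is attained at the corner nearest the origin, and then shifts indices so that these corner values are dominated by a polar-coordinate integral over $\{\pi/n \le r \le 2\pi\}$ plus a one-dimensional integral $\int_{\pi/n}^{2\pi} t^{-\alpha}\,dt$ for the axis terms. You instead prove a two-sided comparability $\tfrac12\Vert\mathbf{t}_{j,k}\Vert \le \Vert\mathbf{x}\Vert \le \tfrac32\Vert\mathbf{t}_{j,k}\Vert$ on every cell whose center lies at distance at least $\sqrt2\,\Delta_n$ from the origin, which converts the cell maximum directly into $3^{\alpha}$ times the cell integral and leaves only four exceptional cells, handled by a crude count giving $\mathcal{O}(n^{\alpha-2})$. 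Your version buys uniformity and brevity: there is no need to locate the maximizing point, no separate axis sums, and the sum-versus-integral comparison is a single disjointness-of-cells argument over one annulus; the paper's version is more explicit about where the maximum sits and keeps everything in the first quadrant. Both arguments terminate in the same elementary radial integral with inner radius of order $1/n$ and yield identical bounds in all three regimes, and both dispose of $\alpha \le 0$ by boundedness. One small point worth making explicit in your write-up: the comparison $\Delta_n^2 \max_{X_{j,k}} \Vert\mathbf{x}\Vert^{-\alpha} \le 3^{\alpha}\iint_{X_{j,k}}\Vert\mathbf{x}\Vert^{-\alpha}\,d\mathbf{x}$ uses both halves of your two-sided estimate (the upper bound on the max via $\Vert\mathbf{x}\Vert\ge\Vert\mathbf{t}_{j,k}\Vert/2$ and the lower bound on the integrand via $\Vert\mathbf{x}\Vert\le 3\Vert\mathbf{t}_{j,k}\Vert/2$), which is where the constant $2^{\alpha}\cdot(3/2)^{\alpha}=3^{\alpha}$ comes from; as stated the step is correct but the reader must reconstruct this.
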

\begin{proof}
When $\alpha > 0$, for all sufficiently large $n$, setting $\mathbf{x}_{j,k} = (x_j,x_k)$, we have by \eqref{eq:D_n}
\begin{align*}
	\Delta_n^2 \sum_{\mathbf{t}_{j,k} \in D_n} \max_{\mathbf{x} \in X_{j,k}} \frac{1}{\Vert \mathbf{x} \Vert^\alpha}
	& \le
	4\Delta_n^2 \big( \sum_{\mathbf{t}_{j,k} \in (0,\pi]^2} \max_{\mathbf{x} \in X_{j,k}} \frac{1}{\Vert \mathbf{x} \Vert^\alpha}
	+ \sum_{t_{j} \in (0,\pi]} \max_{\mathbf{x} \in X_{j,0}} \frac{1}{\Vert \mathbf{x} \Vert^\alpha} \big)
	\\
	& = 4\Delta_n^2 
	\big( \sum_{\mathbf{t}_{j,k} \in (0,\pi]^2} \frac{1}{\Vert \mathbf{x}_{j,k} \Vert^\alpha}
	+ \sum_{t_{j} \in (0,\pi]} \frac{1}{x_j^\alpha} \big)
	\\
	& \le 4\Delta_n^2 
	\big( \sum_{\mathbf{t}_{j,k} \in (t_1,\pi]^2} \frac{1}{\Vert \mathbf{x}_{j,k} \Vert^\alpha}
	+ 2 \sum_{t_{j} \in (0,\pi]} \frac{1}{\Vert \mathbf{x}_{j,1} \Vert^\alpha}
	+ \sum_{t_{j} \in (0,\pi]} \frac{1}{x_j^\alpha} \big)
	\\
	& \le 4\Delta_n^2 
	\big( \sum_{\mathbf{t}_{j,k} \in (t_1,\pi]^2} \frac{1}{\Vert \mathbf{x}_{j,k} \Vert^\alpha}
	+ 3 \sum_{t_{j} \in (0,\pi]} \frac{1}{x_j^\alpha} \big)
	\\
	& \le 4\Delta_n^2 
	\big( \sum_{t_1 < t_j,t_k \atop \Vert \mathbf{t}_{j,k} \Vert \le \sqrt{2} \pi} \frac{1}{\Vert \mathbf{x}_{j,k} \Vert^\alpha}
	+ \frac{3}{x_1^{\alpha}}
	+ 3 \sum_{t_{j} \in (t_1,\pi]} \frac{1}{x_j^\alpha} \big)
	\\
	& = 4\Delta_n^2 
	\big( \dfrac{1}{\Delta_n^2}
	\sum_{t_1 < t_j,t_k \atop \Vert \mathbf{t}_{j,k} \Vert \le \sqrt{2} \pi} \frac{1}{\Vert \mathbf{x}_{j,k} \Vert^\alpha} \, \Delta_n^2
	+ \frac{3}{x_1^{\alpha}}
	+ \frac{3}{\Delta_n} \sum_{t_{j} \in (t_1,\pi]} \frac{1}{x_j^\alpha} \Delta_n
	\big)
	\\
	& \le 4\Delta_n^2 
	\big( \dfrac{1}{\Delta_n^2} \int_{0}^{\pi/2} \int_{\pi/n}^{2\pi} \dfrac{1}{r^\alpha} \, r \, dr \, d\theta
	+ \frac{3}{x_1^{\alpha}}
	+ \frac{3}{\Delta_n} \int_{\pi/n}^{2\pi} \frac{1}{t^\alpha} \, dt \big).
\end{align*}
Since
\[
	\int_{0}^{\pi/2} \int_{\pi/n}^{2\pi} \dfrac{1}{r^\alpha} \, r \, dr \, d\theta
	= \frac{\pi}{2} \times
	\left\{
		\begin{array}{ll}
			\frac{\pi^{2-\alpha}}{\alpha-2} \big( n^{\alpha-2} - 2^{2-\alpha} \big), & \alpha \ne 2, \vspace{0.1cm} \\
			\log 2n, & \alpha = 2,
		\end{array}
	\right.
\]
and
\[
	\int_{\pi/n}^{2\pi} \frac{1}{t^\alpha} \, dt
	= \left\{
		\begin{array}{ll}
			\frac{\pi^{1-\alpha}}{\alpha-1} \big( n^{\alpha-1}-2^{1-\alpha} \big), & \alpha \ne 1, \vspace{0.1cm} \\
			\log 2n, & \alpha = 1,
		\end{array}
	\right.
\]
the result follows from the above estimate when $\alpha >0$.
When $\alpha \le 0$, $\dfrac{1}{\Vert \mathbf{x} \Vert^{\alpha}}$ is bounded on $[-\pi,\pi]^2$, and therefore the result is immediate.
\end{proof}
\begin{Lem} [cf. Lemma 1 in \cite{BEY}] \label{lemma:old}
For a twice continuously differentiable function $h(\mathbf{x})$ on the domain
$R = [a-{\delta_1\over 2},a+{\delta_1\over 2}] \times [b-{\delta_2\over 2},b+{\delta_2\over 2}] \subset \mathbb{R}^2$,
we have
\begin{equation} \label{eq: 4.3}
	\Big| h(a,b) - \frac{1}{\delta_1 \, \delta_2} \iint_R h(\mathbf{x}) \, d\mathbf{x}\Big|
	\leq \frac{1}{24}
	\big( \delta_1^2 \, \displaystyle \max_{\mathbf{x} \in R} |\partial_1^2h(\mathbf{x})|
	+ \delta_2^2 \, \max_{\mathbf{x} \in R} |\partial_2^2 h(\mathbf{x})| \big).
\end{equation}
\end{Lem}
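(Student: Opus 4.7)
The plan is to reduce the statement to the one-dimensional midpoint rule error estimate applied in each coordinate direction and then combine via the triangle inequality. Concretely, I would insert an intermediate ``partial'' average by integrating first in one variable only, and split
\[
	h(a,b) - \frac{1}{\delta_1\delta_2}\iint_R h(\mathbf{x})\,d\mathbf{x}
	= \Big[h(a,b) - \frac{1}{\delta_2}\int_{b-\delta_2/2}^{b+\delta_2/2} h(a,y)\,dy\Big]
	+ \frac{1}{\delta_2}\int_{b-\delta_2/2}^{b+\delta_2/2}\Big[h(a,y) - \frac{1}{\delta_1}\int_{a-\delta_1/2}^{a+\delta_1/2} h(x,y)\,dx\Big]dy.
\]
Each of the two brackets is then the error of a one-dimensional midpoint rule applied to a slice of $h$.

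The key ingredient is therefore the 1D claim: for $g\in C^2([c-\delta/2,c+\delta/2])$,
\[
	\Big|g(c) - \frac{1}{\delta}\int_{c-\delta/2}^{c+\delta/2} g(t)\,dt\Big|
	\le \frac{\delta^2}{24}\max|g''|.
\]
This I would prove by writing $g(c) - \frac{1}{\delta}\int g(t)\,dt = \frac{1}{\delta}\int_{c-\delta/2}^{c+\delta/2}(g(c)-g(t))\,dt$ and expanding $g(t) = g(c) + g'(c)(t-c) + \frac{1}{2}g''(\xi(t))(t-c)^2$ via Taylor's theorem; the linear term integrates to zero by symmetry, and the quadratic remainder is bounded by $\max|g''|\cdot\frac{1}{2\delta}\int_{-\delta/2}^{\delta/2}u^2\,du = \frac{\delta^2}{24}\max|g''|$.

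Applying this with $g(y)=h(a,y)$ on the first bracket yields a bound of $\frac{\delta_2^2}{24}\max_R|\partial_2^2 h|$ (since $g''(y)=\partial_2^2 h(a,y)$ and $(a,y)\in R$). Applying it with $g(x)=h(x,y)$ inside the second bracket yields, for each fixed $y$, the pointwise bound $\frac{\delta_1^2}{24}\max_R|\partial_1^2 h|$, and this bound survives the outer average over $y$. Adding the two estimates by the triangle inequality produces
\[
	\Big|h(a,b) - \frac{1}{\delta_1\delta_2}\iint_R h(\mathbf{x})\,d\mathbf{x}\Big|
	\le \frac{1}{24}\bigl(\delta_1^2\max_R|\partial_1^2 h| + \delta_2^2\max_R|\partial_2^2 h|\bigr),
\]
which is the desired inequality. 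There is no real obstacle here: the only point that requires a line of care is verifying that the intermediate quantity $\frac{1}{\delta_2}\int h(a,y)\,dy$ is well defined and that the Taylor expansions are valid on the closed rectangle $R$, both of which follow from $h\in C^2(R)$ and the convexity of $R$.
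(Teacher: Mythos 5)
Your proof is correct and complete. The paper itself does not prove this lemma---it is quoted from Lemma~1 of \cite{BEY}---but your argument (tensorizing the one-dimensional midpoint-rule error bound: the algebraic splitting into two one-dimensional errors is exact, the linear Taylor term cancels by symmetry of the interval about its midpoint, and the quadratic remainder gives exactly the constant $\tfrac{1}{24}$) is the standard derivation of this estimate and there is nothing to add.
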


As a preparation for the proof of Theorem~\ref{thm:main1}, we note the following.

\begin{Thm} \label{thm:diiference-order}
Given a fixed $\beta \in (0,1)$ and $m \ge 1$, the difference between any two terms in
$\{ F_n(f), F_n^{\beta}(f), F_n(f_1), F_n^{\beta}(f_m), 
I_n(f), I_n^{\beta}(f), I_n(f_1), I_n^{\beta}(f_m) \}$ is $\mathcal{O}(n^2)$ as $n \to \infty$.
\end{Thm}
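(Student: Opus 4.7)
The plan is to introduce the shorthand $A \sim B$ for $A - B = \mathcal{O}(n^2)$ and show that all eight quantities lie in a single $\sim$-class; by transitivity this reduces to producing enough pairwise comparisons to connect the eight labels. I organize these into three families: quadrature errors (a sum versus its defining integral, same integrand), truncation errors (same integrand on $D_n$ versus $D_n^\beta$), and Taylor-approximation errors (the integrands $f$, $f_1$, $f_m$ compared on a common domain).

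For the quadrature comparisons, I apply Lemma~\ref{lemma:old} on each subrectangle $X_{j,k}$ and sum to obtain
\[
	|F_n(g) - I_n(g)|
	\le \frac{\Delta_n^2}{24} \sum_{\mathbf{t}_{j,k}} \bigl( \max_{X_{j,k}} |\partial_1^2 g| + \max_{X_{j,k}} |\partial_2^2 g| \bigr)
\]
for $g \in \{ f, f_1 \}$ on $D_n$ and $g = f_m$ on $D_n^\beta$. Corollaries~\ref{cor:est-f2} and~\ref{cor:fm-2} bound each second derivative by $C/\Vert \mathbf{x} \Vert^4$ on the relevant domain, and Lemma~\ref{lemma:radialsum} with $\alpha = 4$ then converts the resulting sum to $\mathcal{O}(n^2)$, giving $F_n(g) \sim I_n(g)$ in each of the three cases.

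For the truncation comparisons, I observe that on $D_n \setminus D_n^\beta$ the argument is bounded below in norm, so Lemma~\ref{lemma:phi-lower} and Lemma~\ref{lemma:p-upper-lower}(1) make $f$ and $f_1$ uniformly bounded on this annular region. Since it contains $\mathcal{O}(n^2)$ lattice points and has $\mathcal{O}(1)$ area, both $F_n(g) - F_n^\beta(g)$ and $I_n(g) - I_n^\beta(g)$ are $\mathcal{O}(n^2)$, the integral version acquiring the factor $\Delta_n^{-2} = \mathcal{O}(n^2)$ from the definition of $I_n$. For the Taylor-approximation step I exploit the algebraic identity $f - f_m = (p_m - \psi)/(\psi \, p_m)$, combine $|\psi - p_m| \le C \Vert \mathbf{x} \Vert^{2m+2}$ (Corollary~\ref{cor:phi-p}) with the quadratic lower bounds on $\psi$ and $p_m$ (Lemmas~\ref{lemma:phi-lower} and~\ref{lemma:p-upper-lower}), and deduce $|f - f_m| \le C \Vert \mathbf{x} \Vert^{2m-2}$. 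This is uniformly bounded on $D_n^\beta$ for every $m \ge 1$ (and on $D_n$ when $m = 1$), so both the pointwise sum and the $\Delta_n^{-2}$-weighted integral of the difference contribute $\mathcal{O}(n^2)$ by the same counting argument as in the truncation step.

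The main obstacle I anticipate is the bookkeeping of domain restrictions, especially checking that $D_n^\beta$ is contained in the disc $\{ \Vert \mathbf{x} \Vert \le \sqrt{12(1-\beta)}/\overline{s} \}$ so that Corollary~\ref{cor:fm-2}(2) and Lemma~\ref{lemma:p-upper-lower}(2) apply throughout, and that $D_n$ lies in the region where Corollary~\ref{cor:est-f2} is valid; both compatibilities follow from the definitions but must be verified before the estimates above can be invoked. Once this is done, the three families of comparisons connect all eight quantities into one $\sim$-class and the theorem follows.
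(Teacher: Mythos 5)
Your proposal is correct and follows essentially the same route as the paper: the same three families of comparisons (midpoint-rule quadrature error via Lemma~\ref{lemma:old}, Corollaries~\ref{cor:est-f2} and~\ref{cor:fm-2}, and Lemma~\ref{lemma:radialsum} with $\alpha=4$; boundedness of the integrands on $D_n\setminus D_n^{\beta}$ combined with the $\le n^2$ point count; and the bound $|f-f_m|\le C\Vert\mathbf{x}\Vert^{2m-2}$ coming from Corollary~\ref{cor:phi-p} and the quadratic lower bounds), glued together by transitivity. The only cosmetic difference is that the paper makes the final link between the $f$- and $f_m$-classes through the integral difference $I_n^{\beta}(f)-I_n^{\beta}(f_m)$ alone, whereas you also record the (equally valid) sum version.
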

\begin{proof}
First note that
\[
	F_n(f) - F_n^{\beta}(f) = \sum_{\mathbf{t}_{j,k} \in D_n \backslash D_{n}^{\beta}} f(\mathbf{t}_{j,k})
	\quad
	\text{and}
	\quad
	I_n(f) - I_n^{\beta}(f) = \frac{1}{\Delta_n^2} \iint_{D_n \backslash D_{n}^{\beta}} f(\mathbf{x}) d\mathbf{x}.
\]
Since $\# \{ \mathbf{t}_{j,k} \in D_n \backslash D_{n}^{\beta} \} \le n^2$ and $f$ is bounded in
$D_n \backslash D_{n}^{\beta}$,
we therefore deduce
\[
	F_n(f) - F_n^{\beta}(f) = \mathcal{O}(n^2)
	\quad
	\text{and}
	\quad
	I_n(f) - I_n^{\beta}(f) = \mathcal{O}(n^2)
\]
as $n \to \infty$.
As for $F_n(f)-I_n(f)$, we use Lemma~\ref{lemma:old} to estimate
\begin{align*}
	|F_n(f)-I_n(f)|
	& \le \sum_{\mathbf{t}_{j,k} \in D_n} \big| f(\mathbf{t}_{j,k}) - \frac{1}{\Delta_n^2} \int_{X_{j,k}} f(\mathbf{x}) \, d\mathbf{x} \big|
	\\
	& \le \frac{1}{24} \Delta_n^2 \sum_{\mathbf{t}_{j,k} \in D_n}
	\big( \max_{\mathbf{x} \in X_{j,k}} |\partial_1^2f(\mathbf{x})| + \max_{\mathbf{x} \in X_{j,k}} |\partial_2^2f(\mathbf{x})| \big).
\end{align*}
Accordingly, Corollary~\ref{cor:est-f2} gives
\begin{align*}
	|F_n(f)-I_n(f)|
	& \le \frac{C_3( {\pi \over 2} )}{12} \Delta_n^2 \sum_{\mathbf{t}_{j,k} \in D_n} \max_{\mathbf{x} \in X_{j,k}} \frac{1}{\Vert \mathbf{x} \Vert^4}.
\end{align*}
Therefore, by Lemma~\ref{lemma:radialsum}, we have
\[
	F_n(f)-I_n(f) = \mathcal{O}(n^2),
	\qquad
	\text{as }
	n \to \infty.
\]
These show that the difference between any two terms in $\{ F_n(f), F_n^{\beta}(f), I_n(f), I_n^{\beta}(f) \}$
is $\mathcal{O}(n^2)$ as $n \to \infty$.

Next note that
\[
	F_n(f_1) - F_n^{\beta}(f_1) = \sum_{\mathbf{t}_{j,k} \in D_n \backslash D_{n}^{\beta}} f_1(\mathbf{t}_{j,k})
	\quad
	\text{and}
	\quad
	I_n(f_1) - I_n^{\beta}(f_1) = \frac{1}{\Delta_n^2} \iint_{D_n \backslash D_{n}^{\beta}} f_1(\mathbf{x}) d\mathbf{x}.
\]
Similar to above, noting that $\# \{ \mathbf{t}_{j,k} \in D_n \backslash D_{n}^{\beta} \} \le n^2$ and $f_1$ is bounded in
$D_n \backslash D_{n}^{\beta}$, we obtain
\[
	F_n(f_1) - F_n^{\beta}(f_1) = \mathcal{O}(n^2)
	\quad
	\text{and}
	\quad
	I_n(f_1) - I_n^{\beta}(f_1) = \mathcal{O}(n^2)
\]
as $n \to \infty$, and making use of Lemma~\ref{lemma:old}
\begin{align*}
	|F_n(f_1)-I_n(f_1)|
	\le \frac{1}{24} \Delta_n^2 \sum_{\mathbf{t}_{j,k} \in D_n}
	\big( \max_{\mathbf{x} \in X_{j,k}} |\partial_1^2f_1(\mathbf{x})| + \max_{\mathbf{x} \in X_{j,k}} |\partial_2^2f_1(\mathbf{x})| \big).
\end{align*}
Choosing $C_4 = 2\pi$ in the first part of Corollary~\ref{cor:fm-2}, we therefore have
\begin{align*}
	|F_n(f_1)-I_n(f_1)|
	\le \frac{C_4'}{12} \Delta_n^2 \sum_{\mathbf{t}_{j,k} \in D_n} \max_{\mathbf{x} \in X_{j,k}} \frac{1}{\Vert \mathbf{x} \Vert^4}.
\end{align*}
This, in turn, implies by Lemma~\ref{lemma:radialsum}
\[
	F_n(f_1)-I_n(f_1) = \mathcal{O}(n^2),
	\qquad
	\text{as }
	n \to \infty.
\]
Therefore the difference between any two terms in $\{ F_n(f_1), F_n^{\beta}(f_1), I_n(f_1), I_n^{\beta}(f_1) \}$
is also $\mathcal{O}(n^2)$ as $n \to \infty$.

Considering $F_n^{\beta}(f_m) - I_n^{\beta}(f_m)$, we use Lemma~\ref{lemma:old} to obtain
\begin{align*}
	|F_n^{\beta}(f_m) - I_n^{\beta}(f_m)|
	& \le \sum_{\mathbf{t}_{j,k} \in D_n^{\beta}}
	\big| f_m(\mathbf{t}_{j,k}) - \frac{1}{\Delta_n^2} \iint_{X_{j,k}} f_m(\mathbf{x}) \, d\mathbf{x} \big|
	\\
	& \le \frac{1}{24} \Delta_n^2 \sum_{\mathbf{t}_{j,k} \in D_n^{\beta}}
	\big( \max_{\mathbf{x} \in X_{j,k}} |\partial_1^2f_m(\mathbf{x})| + \max_{\mathbf{x} \in X_{j,k}} |\partial_2^2f_m(\mathbf{x})| \big)
\end{align*}
so that Corollary~\ref{cor:fm-2} entails
\begin{align*}
	|F_n^{\beta}(f_m) - I_n^{\beta}(f_m)|
	& \le \frac{C_5(\beta)}{12} \Delta_n^2 \sum_{\mathbf{t}_{j,k} \in D_n^{\beta}}
	\max_{\mathbf{x} \in X_{j,k}} \frac{1}{\Vert \mathbf{x} \Vert^4}
	\le \frac{C_5(\beta)}{12} \Delta_n^2 \sum_{\mathbf{t}_{j,k} \in D_n}
	\max_{\mathbf{x} \in X_{j,k}} \frac{1}{\Vert \mathbf{x} \Vert^4}.
\end{align*}
Thus, by Lemma~\ref{lemma:radialsum}, we have
\[
	F_n^{\beta}(f_m) - I_n^{\beta}(f_m) = \mathcal{O}(n^2),
	\qquad
	\text{as }
	n \to \infty.
\]

To complete the proof, it is therefore sufficient to show for $m \ge 1$ that
\begin{equation} \label{eq:Indif}
	I_n^{\beta}(f) - I_n^{\beta}(f_m) = \mathcal{O}(n^2),
	\qquad
	\text{as }
	n \to \infty.
\end{equation}
Indeed, 
\begin{align*}
	|I_n^{\beta}(f) - I_n^{\beta}(f_m)|
	\le \frac{1}{\Delta_n^2} \iint_{D_n^{\beta}} |f(\mathbf{x}) - f_m(\mathbf{x})| \, d\mathbf{x}
	= \frac{1}{\Delta_n^2} \iint_{D_n^{\beta}} \frac{|\psi(\mathbf{x}) - p_m(\mathbf{x})|}{\psi(\mathbf{x}) \, p_m(\mathbf{x})} \, d\mathbf{x}
\end{align*}
so that by Corollary~\ref{cor:phi-p}, and Lemmas~\ref{lemma:phi-lower} and \ref{lemma:p-upper-lower}, we have
\begin{equation} \label{eq:Indiffinal}
	|I_n^{\beta}(f) - I_n^{\beta}(f_m)|
	\le \dfrac{1}{\Delta_n^2} \, \dfrac{2|\Phi|}{\delta \, C_1({\pi \over 2})} \, \dfrac{\overline{s}^{2m+2}}{(2m+2)!} \, \iint_{D_n^{\beta}} \Vert \mathbf{x} \Vert^{2m-2} \, d\mathbf{x}.
\end{equation}
Since the integral on the right-hand side of \eqref{eq:Indiffinal} is finite, \eqref{eq:Indif} follows and this completes the proof.
\end{proof}

Theorem~\ref{thm:diiference-order} allows us to complete the proof of Theorem~\ref{thm:main1} provided we calculate
one of the terms in $\{ F_n(f), F_n^{\beta}(f), F_n(f_1), F_n^{\beta}(f_m),I_n(f), I_n^{\beta}(f), I_n(f_1), I_n^{\beta}(f_m) \}$.
To this end, we calculate $I_n(f_1)$. The result of this calculation is recorded as Theorem~\ref{thm:Inf1} in \S\ref{sec:mainresults} which,
in turn, completes the proof of Theorem~\ref{thm:main1}. In connection with the proof Theorem~\ref{thm:Inf1}, let us note that, with $a,b$, and $c$ as in \eqref{eq:abc},
we have
\begin{equation} \label{eq:f_1}
	f_1(x,y) = \dfrac{1}{p_1(x,y)} = \dfrac{2 \, |\Phi|}{ax^2+bxy+cy^2}.
\end{equation}
Note that \eqref{eq:s1-s2} implies $a,c>0$, and since $\mathbf{s}_1 \scriptstyle{\backslash}$$\!\!\!\sslash \mathbf{s}_2$ 
Cauchy-Schwarz inequality shows that (cf. \eqref{eq:S})
\[
	d = -4\det(S^TS) < 0.
\]

\vspace{0.1cm}

\noindent
\emph{Proof of Theorem 5.}
By \eqref{eq:f_1}
\[
	I_n(f_1)
	= \frac{1}{\Delta_n^2} \iint_{D_n} f_1 \, dx \, dy
	= \frac{1}{\Delta_n^2} \iint_{D_n} \frac{2|\Phi|}{ax^2 + b xy + cy^2} dx \, dy
\]
where $a,b,c$ are as in \eqref{eq:abc}. Since $f_1(x,y)=f_1(y,x)$, setting
\begin{align*}
E(R)  & := \Big( [0,R] \times [-R,R] \Big) \backslash \Big( [0,\frac{\pi}{n}] \times [-\frac{\pi}{n},\frac{\pi}{n}] \Big),\\
E_n & := [\pi - {\pi \over n},\pi + {\pi \over n}] \times [-\pi - {\pi \over n},-\pi + {\pi \over n}] 
\end{align*}
and making use of the explicit form of $D_n$ in \eqref{eq:D_n} we therefore have
\begin{equation} \label{eq:I_n^1}
	I_n(f_1) = \frac{1}{\Delta_n^2} 
	\left\{
		\begin{array}{ll}
			2 \iint_{E(\pi)} f_1 \, dx \, dy, & n \text{ is odd},
			\vspace{0.1cm} \\
			\left( \iint_{E(\pi+\frac{\pi}{n})} + \iint_{E(\pi-\frac{\pi}{n})} - \iint_{E_n} \right) f_1 \, dx \, dy, & n \text{ is even}.
		\end{array}
	\right.
\end{equation}
Indeed, for $R > \frac{\pi}{n}$, integrating in polar coordinates, we have
\begin{align}
	\iint_{E(R)} f_1 \, dx \, dy
	& = \int_{-\frac{\pi}{2}}^{-\frac{\pi}{4}} \int_{-\frac{\pi}{n \, \sin \theta}}^{-\frac{R}{\sin \theta}}
	\frac{2|\Phi|}{a\cos^2\theta + b \cos\theta \sin \theta + c \sin^2 \theta} \frac{1}{r} dr d\theta
	\label{eq:intf1}
	\\
	& + \int_{-\frac{\pi}{4}}^{\frac{\pi}{4}} \int_{\frac{\pi}{n \, \cos \theta}}^{\frac{R}{\cos \theta}}
	\frac{2|\Phi|}{a\cos^2\theta + b \cos\theta \sin \theta + c \sin^2 \theta} \frac{1}{r} dr d\theta
	\nonumber
	\\
	& + \int_{\frac{\pi}{4}}^{\frac{\pi}{2}} \int_{\frac{\pi}{n \, \sin \theta}}^{\frac{R}{\sin \theta}}
	\frac{2|\Phi|}{a\cos^2\theta + b \cos\theta \sin \theta + c \sin^2 \theta} \frac{1}{r} dr d\theta
	\nonumber
	\\
	& = \log \Big( \frac{n \, R}{\pi} \Big)
	\int_{-\frac{\pi}{2}}^{\frac{\pi}{2}}
	\frac{2|\Phi|}{a\cos^2\theta + b \cos\theta \sin \theta + c \sin^2 \theta} d\theta
	\nonumber
	\\
	& = \log \Big( \frac{n \, R}{\pi} \Big)
	\frac{4|\Phi|}{\sqrt{|d|}} \arctan \Big( \frac{b+2c \, \tan \theta}{\sqrt{|d|}} \Big) \Big|_{-\frac{\pi}{2}}^{\frac{\pi}{2}}
	\nonumber
	\\
	& = \log \Big( \frac{n \, R}{\pi} \Big)
	\frac{4|\Phi|\pi}{\sqrt{|d|}}
	= \log \Big( \frac{n \, R}{\pi} \Big)
	\frac{2|\Phi|\pi}{\sqrt{\det(S^TS)}}.
	\nonumber
\end{align}
Using \eqref{eq:intf1} in \eqref{eq:I_n^1}, we obtain
\[
	I_n(f_1) = \frac{|\Phi|}{\pi \sqrt{\det(S^TS)}} \, n^2 \, \log n
\]
when $n$ is odd, and
\[
	I_n(f_1) = \frac{|\Phi|}{\pi \sqrt{\det(S^TS)}} \, n^2 \, \big( \log n + \frac{1}{2} \log\big( 1 - \frac{1}{n^2} \big) \big)
	- {1 \over \Delta_n^2} \iint_{E_n} f_1 \, dx \, dy
\]
when $n$ is even. As for the very last integral, since $\partial_1^2 f_1, \partial_2^2 f_1 \asymp 1$ on $E_n$,
Lemma~\ref{lemma:old} gives
\[
	{1 \over \Delta_n^2} \iint_{E_n} f_1 \, dx \, dy
	= f_1(-\pi,\pi) + \mathcal{O} \big( {1 \over n^2} \big) 
	= {2 |\Phi| \over \pi^2} \, {1 \over a-b+c} + \mathcal{O} \big( {1 \over n^2} \big).
\]
The proof of Theorem~\ref{thm:Inf1} follows.
\hspace*{\fill} $\square$

\vspace{0.2cm}

\noindent
\emph{Proof of Theorem 2.}
First we show that
\begin{equation} \label{eq:first-est}
	F_n^{\beta}(f) - I_n^{\beta}(f) + I_n^{\beta}(f_m) - F_n^{\beta}(f_m) = 
	\left\{ 	
		\begin{array}{cl}
			\mathcal{O}(\log n), & m = 1,
			\\
			\mathcal{O}(1), & m \ge 2,	
		\end{array}
	\right.
\end{equation}
as $n \to \infty$. Indeed, with $h_m = f-f_m$, we have
\[
	F_n^{\beta}(f) - I_n^{\beta}(f) + I_n^{\beta}(f_m) - F_n^{\beta}(f_m)
	= \sum_{\mathbf{t}_{j,k} \in D_{n}^{\beta}} \Big( h_m(\mathbf{t}_{j,k}) - \frac{1}{\Delta_n^2} \int_{X_{j,k}} h_m(\mathbf{x}) \, d\mathbf{x} \Big)
\]
so that an appeal to Lemma~\ref{lemma:old} gives
\[
	|F_n^{\beta}(f) - I_n^{\beta}(f) + I_n^{\beta}(f_m) - F_n^{\beta}(f_m)|
	\le \frac{\Delta_n^2}{24} \sum_{\mathbf{t}_{j,k} \in D_{n}^{\beta}}
	\big( \max_{\mathbf{x} \in X_{j,k}} |\partial_1^2 h_m(\mathbf{x})| + \max_{\mathbf{x} \in X_{j,k}} |\partial_2^2 h_m(\mathbf{x})| \big).
\]
Lemma \ref{lemma:der-est}, in turn, implies that
\[
	|F_n^{\beta}(f) - I_n^{\beta}(f) + I_n^{\beta}(f_m) - F_n^{\beta}(f_m)|
	\le \frac{C_6(\beta)}{12} \, \Delta_n^2 \, 
	\sum_{\mathbf{t}_{j,k} \in D_{n}^{\beta}} \max_{\mathbf{x} \in X_{j,k}} \Vert \mathbf{x} \Vert^{2m-4}
\]
so that
\[
	|F_n^{\beta}(f) - I_n^{\beta}(f) + I_n^{\beta}(f_m) - F_n^{\beta}(f_m)|
	\le \frac{C_6(\beta)}{12}  \, \Delta_n^2 \sum_{\mathbf{t}_{j,k} \in D_{n}} \max_{\mathbf{x} \in X_{j,k}} \Vert \mathbf{x} \Vert^{2m-4}.
\]
Thus \eqref{eq:first-est} follows from Lemma~\ref{lemma:radialsum}.
To complete the proof, it now suffices to show that
\[
	F_n(f)-I_n(f)+I_n^{\beta}(f)-F_n^{\beta}(f)
	= \mathcal{O}(1),
	\qquad
	\text{as } n \to \infty. 
\]
Arguing as above, we have
\[
	|F_n(f)-I_n(f)+I_n^{\beta}(f)-F_n^{\beta}(f)|
	\le \frac{\Delta_n^2}{24} \sum_{\mathbf{t}_{j,k} \in D_{n} \backslash D_{n}^{\beta}}
	\big( \max_{\mathbf{x} \in X_{j,k}} |\partial_1^2 f(\mathbf{x})| + \max_{\mathbf{x} \in X_{j,k}} |\partial_2^2 f(\mathbf{x})| \big)
\]
whose right-hand side is $\mathcal{O}(1)$ as $n \to \infty$ since $|\partial_k^2 f(\mathbf{x})|$ is bounded on
$D_{n} \backslash D_{n}^{\beta}$ and $\# \{ \mathbf{t}_{j,k} \in D_n \backslash D_{n}^{\beta} \} \le n^2$. This finishes the proof.
\hspace*{\fill} $\square$

\section{The Sum $G_n$} \label{sec:Gn}

Let $aj^2+bjk+ck^2$ be a positive definite binary quadratic form of discriminant $d$, so that
\begin{equation}
	d = b^2 - 4ac < 0, \quad 0 < a \leq c, \quad b \ge 0,
\label{eq: 1.1a}
\end{equation}
(the last inequalities are taken to hold by the symmetries in the problem).
Since $d<0$, we have $b < 2\sqrt{ac} \le 2c$, and $b^2 +|d| =4ac$.

We first re-express $G_n$ of \eqref{eq:G_n} as
\begin{equation*}
G_n = \dfrac{i}{\sqrt{|d|}}\sum_{j,k = 1}^{n-1} \dfrac{1}{j} 
\Big( \frac{1}{k+\mu j}-\frac{1}{k+\overline{\mu}j}-\frac{1}{k-\mu j}+\frac{1}{k-\overline{\mu}j}\Big)
\end{equation*}
where $\mu$ is given by \eqref{eq:mu}.
As (see 6.3.6 of \cite{AS})
\begin{equation*}
\sum_{k=1}^{n-1} \frac{1}{k+z} = \psi(n+z)-\psi(1+z),\qquad (-z \notin \mathbb{N}),
\end{equation*}
we see that
\begin{align*}
G_n  = \dfrac{i}{\sqrt{|d|}}\sum_{j = 1}^{n-1}\frac{1}{j}
\Big( \psi &(n+\mu j)  - \psi(1+\mu j) - \psi (n+\overline{\mu}j) + \psi (1+\overline{\mu}j) \nonumber \\
	&  - \psi(n-\mu j) + \psi(1-\mu j) + \psi (n-\overline{\mu}j) - \psi (1-\overline{\mu}j) \Big).
\end{align*}
Now we use the asymptotic formula
\begin{align*}
	\psi(z) = \log z -\frac{1}{2z} - \frac{1}{12z^2}+ \mathcal{O} \big( \frac{1}{|z|^4}\big)
	\quad
	\text{as } |z| \to \infty, \
	|\arg(z)| < \pi-\beta,
\end{align*}
(see \cite{Lg} for a discussion of this asymptotic) for the $\psi(n + \cdots)$ terms, 
the recurrence and reflection formulas
\begin{align*} 
\psi(1+z) = \psi(z) + \frac{1}{z}, \quad \psi(1-z) = \psi(z) + \pi \cot \pi z,
\end{align*}
for the $\psi(1 + \cdots)$ terms, and $\displaystyle
\cot z = -i \big( 1+ {2e^{2iz} \over 1- e^{2iz}} \big) = i \big( 1+ {2e^{-2iz} \over 1- e^{-2iz}} \big) $
to write
\begin{align}
	G_n
	& = \dfrac{i}{\sqrt{|d|}}
	\sum_{j = 1}^{n-1}
	\frac{1}{j}
	\Big( \log(n+\mu j) - \log(n+\overline{\mu} j) - \log(n-\mu j) + \log(n-\overline{\mu} j)  \nonumber
	\\
	& \hspace{2.4cm}
	- \frac{1}{2} \Big( \frac{1}{n+\mu j} - \frac{1}{n+\overline{\mu} j} - \frac{1}{n-\mu j} + \frac{1}{n-\overline{\mu} j} \Big) \nonumber
	\\
	& \hspace{2.4cm}
	- \frac{1}{12} \Big( \frac{1}{(n+\mu j)^2} - \frac{1}{(n+\overline{\mu} j)^2}
	- \frac{1}{(n-\mu j)^2} + \frac{1}{(n-\overline{\mu} j)^2} \Big) \nonumber 
	\\
	& \hspace{2.4cm}
	+ \pi \cot (\pi \mu j) - \pi \cot (\pi \overline{\mu} j)
	- \frac{1}{\mu j} + \frac{1}{\overline{\mu} j}
	 \Big) 
	+ \mathcal{O} \Big( {\log n \over n^4} \Big) \nonumber
	\\
	& = \dfrac{i}{\sqrt{|d|}}
	\sum_{j = 1}^{n-1}
	\frac{1}{j}
	\Big( 2i \arctan \frac{\frac{\sqrt{|d|}j}{2c}}{n-\frac{bj}{2c}} + 2i \arctan \frac{\frac{\sqrt{|d|}j}{2c}}{n+\frac{bj}{2c}} 
	- \frac{1}{2} \Big( \frac{(\overline{\mu}-\mu)j}{|n+\mu j|^2} + \frac{(\overline{\mu}-\mu)j}{|n-\mu j|^2} \Big) \nonumber
	\\
	& \hspace{2.4cm}	
	- \frac{1}{12} \Big( \frac{2(\overline{\mu}-\mu)nj+(\overline{\mu}^2-\mu^2)j^2}{|n+\mu j|^4}
				     + \frac{2(\overline{\mu}-\mu)nj-(\overline{\mu}^2-\mu^2)j^2}{|n-\mu j|^4} \Big) \nonumber
	\\
	& \hspace{2.4cm}
	-2i\pi \Big( {e^{2i\pi\mu j} \over 1-e^{2i\pi\mu j}} + {e^{-2i\pi\overline{\mu} j} \over 1-e^{-2i\pi\overline{\mu} j}} + 1 \Big)
	+ \big( \frac{1}{\overline{\mu}} - \frac{1}{\mu} \big) \frac{1}{j}
	\Big)
	+ \mathcal{O} \Big( {\log n \over n^4} \Big) \nonumber
	\\
	& = -\dfrac{2}{\sqrt{|d|}}
	\sum_{j = 1}^{n-1}
	\frac{1}{n} \frac{1}{\frac{j}{n}}
	\Big( \arctan \frac{\sqrt{|d|}\frac{j}{n}}{2c-b\frac{j}{n}}
	+ \arctan \frac{\sqrt{|d|}\frac{j}{n}}{2c+b\frac{j}{n}} \Big) \nonumber
	\\
	& - \dfrac{1}{2n}
	\sum_{j = 1}^{n-1}
	\frac{1}{n}
	\Big(
		\frac{1}{a\big(\frac{j}{n}\big)^2-b \frac{j}{n}+c}
		+ \frac{1}{a\big(\frac{j}{n}\big)^2+b \frac{j}{n}+c} \nonumber
	\Big)
	\\
	& - \frac{1}{12 \, n^2} \sum_{j = 1}^{n-1} \frac{1}{n}
	\Big( \frac{2c-b{j \over n}}{(a({j \over n})^2-b{j \over n}+c)^2}
	+ \frac{2c+b{j \over n}}{(a({j \over n})^2+b{j \over n}+c)^2} \Big) \nonumber
	\\
	& +{4\pi \over \sqrt{|d|}} \Re \Big( \sum_{j = 1}^{n-1} \frac{1}{j} {e^{2i\pi\mu j} \over 1-e^{2i\pi\mu j}} \Big)
	+{2\pi \over \sqrt{|d|}} \sum_{j = 1}^{n-1} \frac{1}{j}
	- \dfrac{1}{a}
	\sum_{j = 1}^{n-1}
	\frac{1}{j^2}
	+ \mathcal{O} \Big( {\log n \over n^4} \Big) \nonumber
	\\
	& = -\dfrac{2}{\sqrt{|d|}} \mathcal{G}_1
	- \dfrac{1}{2n} \mathcal{G}_2
	- \frac{1}{12 \, n^2} \mathcal{G}_3
	+{4\pi \over \sqrt{|d|}} \mathcal{G}_4
	+{2\pi \over \sqrt{|d|}} \mathcal{G}_5
	- \dfrac{1}{a} \mathcal{G}_6 \nonumber
	+ \mathcal{O} \Big( {\log n \over n^4} \Big),
	\quad
	\text{say}. \nonumber
	\\
\label{eq: 1.8} \end{align}
For $\mathcal{G}_1,\, \mathcal{G}_2$, and $\mathcal{G}_3$, we use the following version of the Euler-Maclaurin
summation formula (see Lampret \cite{La}): For any integers $n,\, p \geq 1$ and any function $g\in C^{p}[0,1]$,
\begin{align}
\sum_{j=1}^{n-1} \frac{1}{n}  g \big( \frac{j}{n} \big) 
= \int_{0}^{1}g(x)\, dx - {1 \over n}g(0) + \sum_{\ell=1}^{p} \frac{1}{n^{\ell}} {B_{\ell}\over {\ell}!} \big[g^{({\ell}-1)}(x)\big]_{0}^{1} + r_{p,n}(g),
\label{eq: 1.9} \end{align}
where $B_{\ell}$ are the Bernoulli numbers ($B_1 = -{1 \over 2}$, $B_2 = {1 \over 6}$, $B_3=0$, $B_4=-{1\over 30}$), and
\begin{align*}
r_{p,n}(g) = -\frac{1}{n^p}\frac{1}{p!} \int_{0}^{1} B_{p} (-nx) \, g^{(p)}(x)\, dx
\end{align*}
with $B_{p}(x)$ being the $p$-th Bernoulli polynomial in $[0,1)$ extended to all  $x \in \mathbb{R}$ via
$B_{p}(x+1)=B_{p}(x)$.

For $\mathcal{G}_{1}$, we apply \eqref{eq: 1.9} to $g_1 = g_{-}+g_{+}$ with
\[
	g_{\pm}(x) ={1\over x}\arctan\big({\sqrt{|d|}x\over 2c \pm bx}\big).
\]
Integration by parts followed by partial fraction expansion gives
\begin{align*}
	\int_{0}^{1}g_1(x) dx
	& = -{\sqrt{|d|}\over 2}\int_{0}^{1} \Big( {\log x\over ax^2 - bx + c} + {\log x\over ax^2 + bx + c} \Big) \, dx
	\\
	& =  \Im \int_{0}^{1}(\log x)\Big({1\over x + { b+\sqrt{|d|}i\over 2a}} + {1\over x - { b-\sqrt{|d|}i\over 2a}}\Big)\, dx.
\end{align*}
Observe that
\begin{align*}
\int_{0}^{1} {\log x\over x+w} \, dx & ={1\over w} \int_{0}^{1} {\log x\over 1+{x\over w}} \, dx=(\log x)\log(1+{x \over w})\Big|_{0}^{1}
-\int_{0}^{1} {\log(1+{x \over w})\over x}\, dx \nonumber \\
& =-\int_0^1 {\log(1+{x \over w})\over x}\, dx=-\int_0^{-{1\over w}} {\log(1-u)\over u}\, du = \mathrm{Li}_{2}\big(-{1\over w}\big),
\end{align*}
where (see \cite{Ki}, \cite {Z})
\begin{align*}
\mathrm{Li}_{2}(z)  := & \sum_{m=1}^{\infty}{z^m\over m^2}  \quad (|z|\leq 1) 
\\
=& -\int_{0}^{z}{\log(1-u)\over u}\, du  \quad \big(z\in \mathbb{C} \setminus [1,\infty),\, \arg(1-z) \in (-\pi,\pi) \big) 
\end{align*} 
(the integral representation is valid in the unit disk and provides an analytic continuation to the cut plane).
In our case $\displaystyle w = {\pm b +\sqrt{|d|}i\over 2a},\, |w| = \sqrt{{c\over a}} \geq 1, \, -{1\over w} = {\mp b +\sqrt{|d|}i\over 2c},\,
\Im\big(-{1\over w}\big)\neq 0$, and
we arrive at
\begin{equation*}
	\int_{0}^{1}g_1(x) \, dx
	= \Im \big[ \mathrm{Li}_{2}\big({b+\sqrt{|d|}i\over 2c}\big) + \mathrm{Li}_{2}\big({-b+\sqrt{|d|}i\over 2c} \big)\big]
	= \Im\big[\mathrm{Li}_{2}(\mu) -\mathrm{Li}_{2}(-\mu)\big].
\end{equation*}
For an expression in terms of a real function we resort to
Kummer's formula
\begin{align*}
\text{Li}_{2}(re^{i\theta})  = & \int_{0}^{r}{\log(1-2x\cos\theta +x^2)\over 2x}\, dx 
\\
& \quad +  i\big[ \omega\log r  + {1\over 2}\big(\text{Cl}_{2}(2\theta) + \text{Cl}_{2}(2\omega) - 
\text{Cl}_{2}(2\theta +2\omega)\big)\big]  
\end{align*}
for $0<r \leq 1$ with $\displaystyle \omega  = \arctan\big({r\sin\theta\over 1-r\cos\theta}\big)$
(see (5.2)-(5.5) in \cite{L} or p. 8 of \cite{Ki}), involving Clausen's function
\begin{align*}
\text{Cl}_{2}(\theta) := \Im\big[\text{Li}_2(e^{i\theta})\big] = \sum_{k=1}^{\infty}{\sin k\theta\over k^2}.
\end{align*}
(Kummer's formula is usually stated for $0<r<1$, but it remains valid for
$r=1$ with appropriate usage of $\omega$ values about the points $\pm 1$).
Since
\begin{align*}
	\pm\mu = r e^{i\theta_{\pm}}
	\text{ with }
	r = \sqrt{{a \over c}},
	\
	\theta_{\pm} = \arctan \big( {b \over \sqrt{|d|}} \big) \pm {\pi \over 2},
	\
	\omega_{\pm} = \pm \Big( {\pi \over 2} - \arctan {2c \pm b \over \sqrt{|d|}} \Big),
\end{align*}
using elementary properties of the $\arctan$ function and $\mathrm{Cl}_2(-\theta) = -\mathrm{Cl}_2(\theta)$, we get
\begin{align*}
	& \int_{0}^{1}g_1(x) \, dx
	= \Big({\pi\over 2} - \arctan\big({c-a\over \sqrt{|d|}}\big)\Big)\log\sqrt{{a\over c}}
	+ {1\over 2}\Big[\mathrm{Cl}_2\Big(\pi -2\arctan\big({2c+b\over \sqrt{|d|}}\big)\Big) 
	\\
	& \hspace{0.7cm}+ \mathrm{Cl}_2\Big(\pi - 2\arctan\big({2c-b\over \sqrt{|d|}}\big) \Big) 
	+ \mathrm{Cl}_2\Big(2\arctan\big({2c+b\over \sqrt{|d|}}\big) - 2\arctan\big({b\over \sqrt{|d|}}\big)\Big) 
	\\
	& \hspace*{5cm} +\mathrm{Cl}_2\Big(2\arctan\big({2c-b\over \sqrt{|d|}}\big)+ 2\arctan\big({b\over \sqrt{|d|}}\big) \Big)\Big] 
	\\
	& \hspace*{1.8cm} = \Big({\pi\over 2} - \arctan\big({c-a\over \sqrt{|d|}}\big)\Big)\log\sqrt{{a\over c}} + {1 \over 2} C(a,b,c)
\end{align*}
where $C(a,b,c)$ is as given in \eqref{eq:Cabc}. Further, $\displaystyle  g_{\pm}(1) = \arctan{\sqrt{|d|}\over 2c\pm b}$,
$\displaystyle g_{\pm}'(1) = {\sqrt{|d|}\over 2(a \pm b +c)}- \arctan{\sqrt{|d|}\over 2c \pm b}$ and, by
employing limits as $x\to 0^+$ for removable singularities, we have 
$\displaystyle g_{\pm}(0) = {\sqrt{|d|}\over 2c}$, $\displaystyle g_{\pm}'(0) = \mp {\sqrt{|d|}b\over 4c^2}$. 
Since $g_1^{(\mathrm{iv})}(x)$ remains bounded on $[0,1]$, we see that
$r_{4,n}(g_1)  \ll n^{-4}$, and therefore with $p=4$ in \eqref{eq: 1.9} we have
\begin{align} 
	\mathcal{G}_{1}
	& =  \Big({\pi\over 2} - \arctan\big({c-a\over \sqrt{|d|}}\big)\Big)\log\sqrt{{a\over c}}
	+ {1\over 2} C(a,b,c)
	- {1 \over 2n} \Big( {\sqrt{|d|} \over c} + {\pi \over 2} - \arctan {c-a \over \sqrt{|d|}} \Big)
	\nonumber
	\\
	& + {1 \over 12 n^2}
	\Big( {\sqrt{|d|} \over 2} \big( {1 \over a-b+c} + {1 \over a+b+c} \big) - {\pi \over 2} + \arctan{c-a \over \sqrt{|d|}} \Big)
	+ \mathcal{O} \Big( {1 \over n^4} \Big).
\label{eq: 1.23}\end{align}

For $\mathcal{G}_{2}$, the associated function is
\[
	g_2(x) = \frac{1}{ax^2-bx+c} + \frac{1}{ax^2+bx+c},
\]
and we have
\begin{align*}
	\int_0^1 g_2(x) \, dx
	& = {2 \over \sqrt{|d|}} \Big( \arctan \frac{2ax-b}{\sqrt{|d|}} + \arctan \frac{2ax+b}{\sqrt{|d|}} \Big)\Big|_0^1 
	= {2 \over \sqrt{|d|}} \Big( {\pi \over 2} - \arctan {c-a \over \sqrt{|d|}} \Big),
\end{align*}
and
$\displaystyle g_2(0) = {2 \over c},\, g_2'(0)=0,\, g_2(1) = {1 \over a-b+c} + {1 \over a+b+c},\,
g_2'(1) = - {2a-b \over (a-b+c)^2} - {2a+b \over (a+b+c)^2}$. Therefore
\begin{align} 
	\mathcal{G}_{2}
	& = {2 \over \sqrt{|d|}} \Big( {\pi \over 2} - \arctan {c-a \over \sqrt{|d|}}\Big)
	- {1 \over 2n} \Big( {2 \over c} + {1 \over a-b+c} + {1 \over a+b+c} \Big) 	\nonumber \\
	& \qquad - {1 \over 12n^2} \Big( {2a-b \over (a-b+c)^2} + {2a+b \over (a+b+c)^2} \Big) 
	+ \mathcal{O} \Big( {1 \over n^4} \Big).
\label{eq: 1.25}\end{align}

For $\mathcal{G}_{3}$, the corresponding function
\[
	g_3(x) = \frac{2c-bx}{(ax^2-bx+c)^2} + \frac{2c+bx}{(ax^2+bx+c)^2}
\]
has
\begin{align*}
	\int_0^1 g_3(x) \, dx
	& = \Big[ \frac{x}{ax^2-bx+c} + \frac{x}{ax^2+bx+c}
	+ {2 \over \sqrt{|d|}} \Big( \arctan \frac{2ax-b}{\sqrt{|d|}} + \arctan \frac{2ax+b}{\sqrt{|d|}} \Big) \Big]_0^1
	\\
	& = {1 \over a-b+c} + {1 \over a+b+c} + {2 \over \sqrt{|d|}} \Big( {\pi \over 2} - \arctan{c-a \over \sqrt{|d|}} \Big) 
\end{align*}
and $\displaystyle g_3(0) = {4 \over c},\, g_3(1) = \frac{2c-b}{(a-b+c)^2} + \frac{2c+b}{(a+b+c)^2}$,
and therefore
\begin{align} 
	\mathcal{G}_{3}
	 = & {1 \over a-b+c} + {1 \over a+b+c} + {2 \over \sqrt{|d|}} \Big( {\pi \over 2} - \arctan{c-a \over \sqrt{|d|}} \Big) \nonumber \\
& \quad 	- {1 \over 2n} \Big( {4 \over c} + \frac{2c-b}{(a-b+c)^2} + \frac{2c+b}{(a+b+c)^2} \Big)
	+ \mathcal{O} \Big( {1 \over n^2} \Big).
\label{eq: 1.27}\end{align}

In  $\mathcal{G}_{4}$, let $ \displaystyle q := e^{2i\pi\mu}$ (then $\displaystyle |q| = e^{-{\pi \sqrt{|d|} \over c}}$).
By the alternating series test 
\begin{align*}
	\sum_{j = 1}^{n-1} \frac{1}{j} {e^{2i\pi\mu j} \over 1-e^{2i\pi\mu j}}
	& = \sum_{j = 1}^{n-1} \frac{1}{j} {q^{j} \over 1-q^{j}}
	= \sum_{j=1}^{\infty} \frac{1}{j} \dfrac{q^{j}}{1 - q^{j}} + O\big( |q|^n \big),
\end{align*}
and
\begin{align*}
	\sum_{j=1}^{\infty} \frac{1}{j} \dfrac{q^j}{1-q^j} 
	& = \sum_{j=1}^{\infty} \sum_{k = 0}^{\infty} \frac{1}{j}  (q^{k+ 1})^j
	= - \sum_{k = 0}^{\infty} \sum_{j=1}^{\infty} \frac{-1}{j}  (q^{k + 1})^j
	= - \sum_{k = 0}^{\infty} \log (1 - q^{k + 1}) \nonumber	\\
	& = - \log \Big( \prod_{k=0}^{\infty} (1 - q^{k + 1}) \Big) = - \log \left( q^{-{1\over 24}}\eta(\mu) \right),
\end{align*}
where $\eta$ is the Dedekind eta-function (see \cite{Ap}), so that
\begin{align}
	\mathcal{G}_{4} = - \Re \Big( \log \left( q^{-{1\over 24}}\eta(\mu) \right) \Big) + \mathcal{O} \big( |q|^n \big)
	= - \Big( {\pi\sqrt{|d|} \over 24 c} + \log |\eta(\mu)|  \Big) + \mathcal{O} \big( |q|^n \big).
\label{eq: 1.30}\end{align}

Next, we have (see p. 560 of \cite{Lg})
\begin{equation} \label{eq: 1.31}
	\mathcal{G}_5 = \log n +\gamma - {1\over 2n} - {1\over 12 n^2} + \mathcal{O}\big({1\over n^4}\big),
\end{equation}
and (see (6.4.3) and (6.4.12) of \cite {AS})
\begin{equation} \label{eq: 1.32}
	\mathcal{G}_6 = {\pi^2\over 6} -{1\over n} -{1\over 2n^2} -{1\over 6n^3} + \mathcal{O}\big({1\over n^5}\big).
\end{equation}

Combining the results (\ref{eq: 1.23}), (\ref{eq: 1.25}), (\ref{eq: 1.27}), (\ref{eq: 1.30}), (\ref{eq: 1.31}) and (\ref{eq: 1.32}) in (\ref{eq: 1.8}), we obtain \eqref{eq:Gnasymp} for the parameters $a,b,c$ satisfying the inequalities \eqref{eq: 1.1a}.

\begin{Rem}
It is possible to approach the calculation of $G_n$ by passing to the Epstein zeta-function associated to our binary quadratic form and then 
using using Kronecker's first limit formula. But then, even the $\mathcal{O}(1)$ part of $G_n$ seems to contain a problematic term. The
method followed here gives the lower order terms of $G_n$ as precisely as wished.  
\end{Rem}

\section{The sum $F_n(f_1)$} \label{sec:Fnf1}

With $a,b,c$ as in \eqref{eq:abc}, we have (see \eqref{eq:f_1})
\begin{align*}
	F_n(f_1)
	& = \sum_{\mathbf{t}_{j,k} \in D_n} f_1(\mathbf{t}_{j,k})
	= \dfrac{|\Phi|n^2}{2\pi^2} \sum_{\mathbf{t}_{j,k} \in D_n} \dfrac{1}{aj^2+bjk+ck^2}
	\nonumber
	\\
	& = \dfrac{|\Phi|n^2}{2\pi^2} \times
	\left\{
		\begin{array}{cl}
			\sum\limits_{-{ n-1 \over 2} \le j,k \le {n-1 \over 2} \atop (j,k) \ne (0,0)} \dfrac{1}{aj^2+bjk+ck^2},
			& n \text{ odd},
			\\
			\sum\limits_{-{ n \over 2} \le j,k \le {n-2 \over 2} \atop (j,k) \ne (0,0)} \dfrac{1}{aj^2+bjk+ck^2},
			& n \text{ even},
		\end{array}
	\right.
	\nonumber
\end{align*}
which can be re-expressed as
\begin{align} \label{eq:Fnf1concrete}
	F_n(f_1)
	& = 	\left\{
		\begin{array}{ll}
			\dfrac{|\Phi|n^2}{\pi^2} \left( G_{{n+1 \over 2}} + \Big( \frac{1}{a}+\frac{1}{c} \Big) H_{{n+1 \over 2}} \right),
			& n \text{ odd},
			\vspace{0.15cm}\\
			\dfrac{|\Phi|n^2}{\pi^2}
			\left( G_{{n +2 \over 2}} + \Big( \frac{1}{a}+\frac{1}{c} \Big) H_{{n+2 \over 2}} - {1 \over 2} U_{{n \over 2}} \right)
			+ \dfrac{2|\Phi|}{\pi^2} \Big( {1 \over a+b+c} - {1 \over a} - {1 \over c} \Big),
			& n \text{ even},
		\end{array}
	\right.
\end{align}
where (see \eqref{eq: 1.32})
\begin{equation} \label{eq:Hn}
	H_n
	= \mathcal{G}_{6}
	= \sum_{j=1}^{n-1} \dfrac{1}{j^2}
	= {\pi^2\over 6} -{1\over n} -{1\over 2n^2} -{1\over 6n^3} + \mathcal{O}\big({1\over n^5}\big)
\end{equation}
and
\begin{align*}
	U_n
	& = \sum_{j = 1}^{n}
	\Big( \dfrac{1}{aj^2+bjn+cn^2} + \dfrac{1}{aj^2- bjn+cn^2} + \dfrac{1}{an^2+ bnj+cj^2} + \dfrac{1}{an^2- bnj+cj^2} \Big)
	\\
	& = {1 \over n} \sum_{j = 1}^{n} {1 \over n} 
	\Big( \dfrac{1}{a({j \over n})^2+b{j \over n}+c} + \dfrac{1}{a({j \over n})^2- b{j \over n}+c}
	+ \dfrac{1}{a+ b{j \over n}+c({j \over n})^2} + \dfrac{1}{a- b{j \over n}+c({j \over n})^2} \Big).
\end{align*}
We apply the Euler-Maclaurin summation formula \eqref{eq: 1.9} with $p=4$ to $V_n = n U_n$ for which the associated function
\[
	g(x) = \dfrac{1}{ax^2+bx+c} + \dfrac{1}{ax^2- bx+c} + \dfrac{1}{a+ bx+cx^2} + \dfrac{1}{a- bx+cx^2}
\] 
has $\displaystyle g(0) = 2 \big( {1 \over a} + {1 \over c} \big)$,
$g'(0) = 0$, $\displaystyle g(1) = -g'(1) = 2 \big( {1 \over a-b+c} + {1 \over a+b+c} \big)$, and
\begin{align*}
	\int_0^1 g(x) \, dx
	& = {2 \over \sqrt{|d|}}
	\big( \arctan{2a+b\over\sqrt{|d|}}+\arctan{2a-b\over\sqrt{|d|}}+\arctan{2c+b\over\sqrt{|d|}}+\arctan{2c-b\over\sqrt{|d|}} \big)
	\\
	& = {2 \pi \over \sqrt{|d|}},
\end{align*}
so that we have
\begin{align} 
	U_n
	= {2 \pi \over \sqrt{|d|}} {1 \over n}
	& + \big( {1 \over a-b+c} + {1 \over a+b+c} - {1 \over a} - {1 \over c} \big) {1 \over n^2}
	\nonumber
	\\
	& - {1 \over 6} \big( {1 \over a-b+c} + {1 \over a+b+c} \big) {1 \over n^3}
	+ \mathcal{O} \Big({1 \over n^5}\Big).
	\label{eq:Un}
\end{align}
Using \eqref{eq:Gnasymp}, \eqref{eq:Hn}, and \eqref{eq:Un} in \eqref{eq:Fnf1concrete}, we obtain \eqref{eq:Fnf1}.

\section{Examples associated to root systems of semisimple Lie algebras} \label{sec:square}
Here we give an exposition of our methods on the asymptotic analysis of $F_n$  when $\Phi$ comprises of positive roots of two particular cases of rank 2 complex semisimple Lie algebras. Below $\{ \mathbf{e}_1, \mathbf{e}_2 \}$  denotes the standard basis for $\mathbb{R}^2$.

{\it Example 1.} Let $\Phi$ be the set of positive roots of $\mathfrak{sl}_2\times \mathfrak{sl}_2$, 
$\Phi \cup -\Phi$ is the root system of type $A_1\times A_1$.  
We may pick $\Phi = \{ \mathbf{e}_1, \mathbf{e}_2 \}$, then
\[
	F_n = F_n(f )= \sum_{j,k=0 \atop (j,k)\neq (0,0)}^{n-1} \frac{1}{1-\frac{1}{2}(\cos\frac{2\pi j}{n} + \cos\frac{2\pi k}{n})}, 
\]
and we use the first part of Theorem~\ref{thm:main2} to conclude
\begin{equation} \label{eq:Fnfsq}
	F_n =\frac{2}{\pi}n^2\log n+\frac{1}{\pi}\Big(2\gamma+\log \big( {32 \pi \over \Gamma({1 \over 4})^4} \big) \Big) n^2+\mathcal{O}(\log n).
\end{equation}

To see this, we apply Theorems~\ref{thm:Fnf1} and \ref{thm:Inf1} for the parameters
$|\Phi|=2, a=1,b=0,c=1$ corresponding to the square lattice to find
\begin{equation} \label{eq:Fnf1sq}
	F_n(f_1) = {2 \over \pi} \, n^2 \log n + {2 \over \pi} \left(\gamma-2\log|\eta(i)|-\frac{2G}{\pi}-\log 2\right) n^2 + \mathcal{O}(\log n),
\end{equation}
where $G$ is Catalan's constant, and
\begin{equation} \label{eq:Inf1sq}
	I_n(f_1) = \dfrac{2}{\pi}n^2\log n + \mathcal{O}(1).
\end{equation}
Below we show
\begin{equation} \label{eq:Infsq}
	I_n(f)=\frac{2}{\pi} \, n^2 \log n+\frac{1}{\pi}(3\log 2-2\log \pi+\frac{4G}{\pi}) \, n^2+\mathcal{O}(1).
\end{equation}
Since $\displaystyle \eta(i)=\frac{\Gamma(\frac{1}{4})}{2\pi^{\frac{3}{4}}}$, use of \eqref{eq:Fnf1sq}, \eqref{eq:Inf1sq} and \eqref{eq:Infsq}
in Theorem~\ref{thm:main2} gives \eqref{eq:Fnfsq}. \vspace{0.1cm}

For proving \eqref{eq:Infsq}, we start from \eqref{eq:Inf}. Although the integration domain $D_n$ (cf. \eqref{eq:D_n}) depends on the parity of $n$, $f$ in \eqref{eq:Fn-concrete} is $2\pi$-periodic in both of its arguments, $I_n(f)$ can be rewritten as
\begin{equation} \label{eq:Infalternative}
 	I_n(f)
	= \frac{1}{\Delta_n^2} 
	\iint_{R_{n}}
	f(\mathbf{x}) \, d\mathbf{x}
	\qquad
	\text{with}
	\qquad
	R_n = \left[ -\pi,\pi \right]^2 \backslash \left[ -\frac{\pi}{n},\frac{\pi}{n} \right]^2
\end{equation}
for all $n$. For the square lattice \eqref{eq:Infalternative} takes on the form
\[
	I_n(f) = \frac{n^2}{4\pi^2} \iint_{R_n} \dfrac{1}{1-\frac{1}{2}(\cos x+\cos y)}\,dx \, dy.
\]
Using the symmetry of the region $R_n$ along with the evenness of the integrand with respect to both
$x$ and $y$, we now have
\[
	I_n(f) = \frac{n^2}{\pi^2} \iint_{R^+_n} \dfrac{1}{1-\frac{1}{2}(\cos x+\cos y)}\,dx \, dy
\]
where $\displaystyle R^+_n = [0,\pi]^2 \backslash [0, {\pi \over n}]^2$. The change of variables
$\displaystyle \tan{\frac{1}{2}x}=u\geq 0$ and $\displaystyle \tan{\frac{1}{2}y}=v\geq 0$, in turn,
yields
\[
	I_n(f) = \frac{4n^2}{\pi^2} \iint_{P_n} \dfrac{1}{u^2+v^2+2u^2v^2}\,du \, dv
\]
with $P_n = [0,\infty)^2 \backslash [0,b_n]^2$ and $\displaystyle b_n = \tan\big(\frac{\pi}{2n}\big)$.
Passing to the polar coordinates and then using the change of variables $r\sin(2\alpha)=\sqrt{2}t$ in the
inner integral, we obtain
\begin{align}
	I_{n}(f)
	& =  {8 n^2 \over \pi^2} \int_0^{\pi/4} \!\!\!\int_{\frac{b_n}{\cos\alpha}}^\infty \frac{r \, dr \, d\alpha}{r^2+2r^4\cos^2\alpha \sin^2 \alpha}
	= {8 n^2 \over \pi^2} \int_0^{\pi/4} \!\!\!\int_{\sqrt{2} b_n\sin \alpha}^\infty \frac{dt \, d\alpha}{t(1+t^2)}
	\label{eq:pointme}
	\\
	& = {4 n^2 \over \pi^2} \int_0^{\pi/4} \log\Big(\frac{t^2}{1+t^2}\Big) \Big|_{\sqrt{2} b_n\sin \alpha}^{\infty} \, d\alpha
	= {4 n^2 \over \pi^2}\int_0^{\pi/4} \log\Big(\frac{1+2b_n^2\sin^2\alpha}{2b_n^2\sin^2\alpha}\Big)\,d\alpha.
	\nonumber
\end{align}
Since
\[
	\log\Big(\frac{1+2b_n^2\sin^2\alpha}{2b_n^2\sin^2\alpha}\Big)
	=-\log(2b_n^2\sin^2\alpha) + \mathcal{O}\big(\frac{1}{n^2}\big)
\]
and
\[
	\int_0^{\pi/4} \log(\sin\alpha)\,d\alpha=-\frac{\pi}{4} \log 2-\frac{G}{2},
\]
we get (using $\log(\tan x) = \log(x) + \mathcal{O}(x^2)$ as $x \to 0^+$)
\begin{align*}
	\int_0^{\pi/4} \log\Big(\frac{1+2b_n^2\sin^2\alpha}{2b_n^2\sin^2\alpha}\Big)\,d\alpha
	& = -\frac{\pi}{4} \log(2b_n^2)+G+\frac{\pi}{2}\log2 +\mathcal{O}\big(\frac{1}{n^2}\big)
	\\
	& = \frac{\pi}{2}\log n + \big( \frac{3\pi}{4}\log 2-\frac{\pi}{2}\log \pi+G \big) +\mathcal{O}\big(\frac{1}{n^2}\big),
\end{align*}
and \eqref{eq:Infsq} follows.

{\it Example 2.} Let $\Phi$ be the set of positive roots of $\mathfrak{so}_5$, $\Phi \cup - \Phi$ is the root system of type $B_2$.  
We may pick  $\Phi=\{ \mathbf{e}_1, \mathbf{e}_2, \mathbf{e}_1-\mathbf{e}_2,\mathbf{e}_1+\mathbf{e}_2\}$, and choose
$\{ \mathbf{e_1},\mathbf{e_2} \}$ as basis. Then with our notation,
$$
	S =
	\begin{bmatrix}
		1 & 0 & 1 & 1
		\\
		0 & 1 & -1 & 1
	\end{bmatrix}^T, \ S^TS=\begin{bmatrix}
		3 & 0
		\\
		0 & 3
	\end{bmatrix}=\begin{bmatrix}
		a & b/2
		\\
		b/2 & c
	\end{bmatrix},
	\
	\sqrt{\det(S^TS)}=3,
	\ d=-36,
$$
$$\phi(\mathbf{x})=\frac{1}{8}(e^{\pm i x_1}+e^{\pm i x_2}+e^{\pm i (x_1-x_2)}+e^{\pm i (x_1+x_2)}),$$
$$\psi(\mathbf{x})=1-\frac{1}{4}(\cos x_1+\cos x_2+\cos (x_1-x_2)+\cos (x_1+x_2)), $$
and
\[
	F_n (f) = \sum_{j,k=0 \atop (j,k)\neq (0,0)}^{n-1}
	\frac{1}{1-\frac{1}{4}(\cos \frac{2\pi j}{n}+\cos \frac{2\pi k}{n}+\cos\frac{2\pi (j-k)}{n}+\cos\frac{2\pi (j+k)}{n})}.
\]
Applying Theorems~\ref{thm:Fnf1} and \ref{thm:Inf1} with $a=c=3$, $b=0$, and $d = -36$, we find
\[
	F_n(f_1)
	= {4 \over 3 \pi} n^2 \log n 
	+ {4 \over 3\pi} 
	\Big(\gamma - \log 2  - 2 \log |\eta(i)| - \frac{2G}{\pi} \Big) n^2 + \mathcal{O}(1)
\]
and
\[
	I_n(f_1)=\frac{4}{3\pi}n^2\log n + \mathcal{O}(1).
\]
The same procedure between \eqref{eq:Infalternative} and \eqref{eq:pointme} yields
\begin{align*}
	I_{n}(f)
	& = \frac{n^2}{4\pi^2} \iint_{R_n} \dfrac{dx \, dy}{1-\frac{1}{4}(\cos x + \cos y + \cos (x-y) + \cos (x+y))}
	\\
	& = \frac{n^2}{\pi^2} \iint_{R_n^+} \dfrac{dx \, dy}{1-\frac{1}{4}(\cos x + \cos y + \cos (x-y) + \cos (x+y))}
	\\
	& = \frac{8n^2}{\pi^2} \iint_{P_n} \dfrac{du \, dv}{3u^2+3v^2+2u^2v^2}
	= \frac{16n^2}{\pi^2} \int_{0}^{\pi/4} \!\!\! \int_{{b_n \over \cos \alpha}}^{\infty} \dfrac{r\, dr\, d\alpha}{3r^2+2r^4\, \cos^2\alpha \, \sin^2 \alpha}
	\\
	& = \frac{16n^2}{\pi^2} \int_{0}^{\pi/4} \!\!\! \int_{\sqrt{2} b_n \sin \alpha}^{\infty} \dfrac{dt \, d\alpha}{t(3+t^2)}
	= \frac{8n^2}{3\pi^2} \int_{0}^{\pi/4} \log\left(\frac{3+2b_n^2\sin^2\alpha}{2b_n^2\sin^2\alpha}\right) \, d\alpha  
\end{align*}
so that
$$I_n(f)
=\displaystyle \frac{4}{3\pi}n^2\log n+\frac{2n^2}{3\pi}\left(\log\left(\frac{24}{\pi^2}\right)+\frac{4G}{\pi}\right)+\mathcal{O}(1).$$
	
Using Theorem 2,

\begin{align*}
	F_n(f)
	& = I_n(f)-I_n(f_1)+F_n(f_1)+\mathcal{O}(\log n)
	\\
	& = \frac{4}{3\pi}n^2\log n
	+ \frac{2}{3\pi} \Big(2\gamma + \log \big( {96 \pi \over \Gamma({1 \over 4})^4}\big) \Big) n^2
	+ \mathcal{O}(\log n).
\end{align*}

\section{Further Problems} \label{sec:fp}

In view of Theorem 2,  one may consider carrying out the explicit calculations taking $m=2$ (and therefore obtaining results with more precision)
for some particular lattices. It is also desirable to obtain the exact rational values for $F_n$ at least in some special cases (e.g. in the cases of the triangular
lattice and the square lattice for some sequences of $n$  which may be the sequence of primes or the sequence of powers of $2$). This would involve
employing the tools of algebraic number theory.

\section*{Acknowledgment} This research is supported by Bo\u{g}azi\c{c}i University Research Fund Grant Number 13387.

\end{document}